\def\NZQ{\mathbb}               % the font for N,Z,Q,R,C
\def\NN{{\NZQ N}}
\def\ZZ{{\NZQ Z}}
\newcommand{\choos}[2]{\begin{pmatrix} 
#1 \\
#2 
\end{pmatrix}}
\newtheorem{Theorem}{Theorem}[section]
\newtheorem{Lemma}[Theorem]{Lemma}
\newtheorem{Corollary}[Theorem]{Corollary}
\newtheorem{Proposition}[Theorem]{Proposition}
\newtheorem{Remark}[Theorem]{Remark}
\newtheorem{Definition}[Theorem]{Definition}
\newtheorem{Question}[Theorem]{Question}
\newtheorem*{theorem*}{Theorem}
\newtheorem*{lemma*}{Lemma}
\newtheorem*{corollary*}{Corollary}
\let\epsilon\varepsilon
\let\phi=\varphi
\let\kappa=\varkappa
\begin{document}
\author{Soumya Deepta Sanyal}
\title{Irrational behavior of algebraic discrete valuations}

%\title{}
%\author{ }
%\thanks{}
\date{}
\address{28 Mathematical Sciences Building, Department of Mathematics,
University of Missouri, Columbia, MO 65211, USA}
\email{sds2p8@mail.missouri.edu}

%\begin{abstract} 
%\end{abstract}

\maketitle

\section{Introduction}

In this paper, we discuss the behavior of algebraic discrete valuations dominating a two dimensional normal (Noetherian) local domain $(R,\mathfrak{m}_R)$. \\

To a valuation $\nu$ of the quotient field $K=QF(R)$ of $R$, we associate a valuation ring $V_{\nu}=\{f \in K \mid \nu(f) \geq 0 \}$. This ring is quasi-local, with maximal ideal $\mathfrak{m}_{\nu} = \{f \in K \mid \nu(f)>0\}$. We say that the valuation $\nu$ dominates $(R,\mathfrak{m}_R)$ if $V_{\nu} \supset R$ and $R \cap \mathfrak{m}_{\nu} = \mathfrak{m}_R$. We say that the valuation $\nu$ is discrete if its value group is order isomorphic to $\mathbb{Z}$. In this case $V_{\nu}$ is Noetherian, and thus is local.\\

For $n \in \mathbb{Z}_{\geq 0}$, let $I_n=\{f \in R \mid \nu(f) \geq n\}$. The questions we address in this paper are: what is $\ell_R(R/I_n)$ for $n \gg 0$? How close is it to being a polynomial? What is $\lim_{n \to \infty} \frac{\ell_R(R/I_n)}{n^2}$? Is this limit well behaved?\\   

There are two types of discrete valuations $\nu$ dominating $R$. The first case is that $\nu$ is divisorial, i.e., the residue field extension $V_{\nu}/\mathfrak{m}_{\nu} : R/\mathfrak{m}_R$ is transcendental. By Abhyankar's inequality (Theorem 1 of \cite{Abhyankar}) it follows that the residue field extension given by a divisorial valuation $\nu$ dominating a normal local domain of dimension two is of transcendence degree exactly one. The second case is that $\nu$ is algebraic, i.e., the residue field extension $V_{\nu}/\mathfrak{m}_{\nu} : R/\mathfrak{m}_R$ is algebraic.\\

In the divisorial case, it is known by work of Cutkosky and Srinivas in \cite{DaleSrinivas} that when $R$ is excellent and equicharacteristic of dimension 2, $\ell_R(R/I_n)$ can be written as a quadratic polynomial $Q(n)$ plus a bounded function $\sigma(n)$ for $n\gg 0$. They further show that if $R/\mathfrak{m}_{R}$ has characteristic zero, or is finite, then $\sigma(n)$ is periodic for $n\gg 0$ and they give an example to show that there exist $R$, $\nu$ with characteristic $R/\mathfrak{m}_{R}>0$ such that $\sigma(n)$ is \textbf{not} eventually periodic. Finally, in their Example 6, they give an example of a divisorial valuation $\nu$ dominating a 3 dimensional ring such that $\lim_{n \rightarrow \infty}{\frac{\ell_R(R/I_n)}{n^3}}$ is an irrational number.\\

The analysis in dimension two of  \cite{DaleSrinivas} left open the remaining case that $\nu$ is an algebraic discrete valuation. In the case that $V_{\nu}/\mathfrak{m}_{\nu}$ is finite over $R/\mathfrak{m}_R$, everything is well behaved. In section \ref{finitesection} of this paper, we prove the following Theorem \ref{linear} and Corollary \ref{zerolimit}:\\

 \begin{Theorem}\label{linear}
Suppose that $\nu$ is an algebraic discrete valuation dominating a local domain $(R,\mathfrak{m}_R)$, and that $V_{\nu}/\mathfrak{m}_{\nu}$ is finite over $R/\mathfrak{m}_R$. Then there exist $c,b \in \mathbb{Z}$ such that $$\ell_R(R/I_n)=cn+b$$ for $n\gg 0$, where $c=|V_{\nu}/\mathfrak{m}_{\nu}:R/\mathfrak{m}_R|$.
\end{Theorem}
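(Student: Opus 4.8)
The plan is to reduce the statement to the stabilization of the numbers $\dim_{k}(I_{n}/I_{n+1})$, where $k=R/\mathfrak{m}_{R}$, and to show these are eventually equal to $c=[V_{\nu}/\mathfrak{m}_{\nu}:k]$. Write $L=V_{\nu}/\mathfrak{m}_{\nu}$ and fix a uniformizing parameter $t$ of the DVR $V_{\nu}$, so $\nu(t)=1$. Since $\nu$ dominates $R$ we have $\mathfrak{m}_{R}I_{n}\subseteq I_{n+1}$, so each $I_{n}/I_{n+1}$ is a $k$-vector space and $\ell_{R}(R/I_{n})=\sum_{j=0}^{n-1}\dim_{k}(I_{j}/I_{j+1})$. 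Because $V_{\nu}$ is a DVR, $I_{n}=R\cap\mathfrak{m}_{\nu}^{n}$ and $I_{n+1}=I_{n}\cap\mathfrak{m}_{\nu}^{n+1}$, so $f\mapsto\overline{ft^{-n}}$ defines an injective $k$-linear map $I_{n}/I_{n+1}\hookrightarrow\mathfrak{m}_{\nu}^{n}/\mathfrak{m}_{\nu}^{n+1}\cong L$; in particular $\dim_{k}(I_{n}/I_{n+1})\le c$ for every $n$ and the lengths above are finite. It therefore suffices to prove that $\dim_{k}(I_{n}/I_{n+1})=c$ for $n\gg 0$.

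I would deduce this from two facts. First, the semigroup $\Gamma:=\nu(R\setminus\{0\})\subseteq\mathbb{Z}_{\ge 0}$ generates $\mathbb{Z}$ as a group, since every element of $K^{\times}=QF(R)^{\times}$ is a quotient of two elements of $R\setminus\{0\}$ and $\nu(K^{\times})=\mathbb{Z}$; hence, being a numerical semigroup, $\Gamma$ contains all integers $\ge N_{1}$ for some $N_{1}$. Second — and this is the crux — there is an integer $N\ge 1$ with $\dim_{k}(I_{N}/I_{N+1})=c$, i.e. the map $I_{N}/I_{N+1}\to L$ above is onto. To prove the second fact, choose a $k$-basis $\bar\beta_{1},\dots,\bar\beta_{c}$ of $L$; each $\bar\beta_{i}$ is nonzero in the field $L$, so any lift $\beta_{i}\in V_{\nu}$ has $\nu(\beta_{i})=0$. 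Writing $\beta_{i}=a_{i}/b_{i}$ with $a_{i},b_{i}\in R\setminus\{0\}$ (possible since $V_{\nu}\subseteq K=QF(R)$), set $b=b_{1}\cdots b_{c}$, pick any $x\in\mathfrak{m}_{R}\setminus\{0\}$ (one exists, as a discrete valuation cannot dominate a field), and put $f_{i}=a_{i}x\prod_{j\ne i}b_{j}\in R$ and $N=\nu(bx)\ge 1$. Then $f_{i}=\beta_{i}\,bx$, so $\nu(f_{i})=N$ and $\overline{f_{i}t^{-N}}=\bar\beta_{i}\cdot\overline{bx\,t^{-N}}$ with $\overline{bx\,t^{-N}}\in L^{\times}$; hence $\{\overline{f_{i}t^{-N}}\}_{i}$ is again a $k$-basis of $L$, so $f_{1},\dots,f_{c}\in I_{N}$ have $k$-independent images filling out $L$, forcing $\dim_{k}(I_{N}/I_{N+1})=c$.

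Finally I would propagate this to all large $n$. With $N$ and $f_{1},\dots,f_{c}$ as above, for $n\ge N+N_{1}$ we have $n-N\in\Gamma$, so there is $g\in R$ with $\nu(g)=n-N$; then $f_{1}g,\dots,f_{c}g\in I_{n}$ and $\overline{f_{i}g\,t^{-n}}=\overline{f_{i}t^{-N}}\cdot\overline{g\,t^{-(n-N)}}$, which is again a $k$-basis of $L$ because $\overline{g\,t^{-(n-N)}}\in L^{\times}$. Thus $\dim_{k}(I_{n}/I_{n+1})=c$ for all $n\ge N+N_{1}=:N_{2}$, and for $n>N_{2}$,
\[
\ell_{R}(R/I_{n})=\sum_{j=0}^{N_{2}-1}\dim_{k}(I_{j}/I_{j+1})+\sum_{j=N_{2}}^{n-1}c=cn+b,\qquad b:=\sum_{j=0}^{N_{2}-1}\dim_{k}(I_{j}/I_{j+1})-cN_{2}\in\mathbb{Z},
\]
which gives the claim. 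The only genuine obstacle is the realization of a $k$-basis of $L$ inside $R$ at a common $\nu$-value (the construction of the $f_{i}$); everything else is bookkeeping with $\nu$ and the DVR $V_{\nu}$, and Noetherianity of $R$ is never used.
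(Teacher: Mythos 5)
Your proof is correct and follows essentially the same route as the paper: an upper bound $\dim_k(I_n/I_{n+1})\le c$ (the paper's Lemma~\ref{max}, which you obtain via the injection into $V_\nu/\mathfrak{m}_\nu$), eventual fullness of the value semigroup (Lemma~\ref{semigroup}), realizing a full $k$-basis of $V_\nu/\mathfrak{m}_\nu$ inside $I_N/I_{N+1}$ by clearing denominators of lifts of residue basis elements (Proposition~\ref{startingnow}), and multiplying by elements of prescribed value to propagate to all large $n$ (Lemma~\ref{persist}). The only cosmetic divergence is that you phrase the upper bound via the embedding $I_n/I_{n+1}\hookrightarrow\mathfrak{m}_\nu^n/\mathfrak{m}_\nu^{n+1}$ rather than the paper's linear-dependence argument, and you throw in an extra factor $x\in\mathfrak{m}_R$ to force $N\ge 1$; both are harmless.
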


%We remark that the constant $c$ is actually the degree of the residue field extension: \newline $c=|V_{\nu}/\mathfrak{m}_{\nu}:R/\mathfrak{m}_R|$.\\

 \begin{Corollary}\label{zerolimit}
Suppose that $\nu$ is an algebraic discrete valuation dominating a local domain $(R,\mathfrak{m}_R)$, and that $V_{\nu}/\mathfrak{m}_{\nu}$ is finite over $R/\mathfrak{m}_R$. Then $$\lim_{n \to \infty} \frac{\ell_R(R/I_n)}{n^2}=0.$$\\
\end{Corollary}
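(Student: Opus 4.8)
The plan is to obtain this as an immediate consequence of Theorem \ref{linear}. Under the stated hypotheses, Theorem \ref{linear} supplies integers $c = |V_\nu/\mathfrak{m}_\nu : R/\mathfrak{m}_R|$ and $b$ such that $\ell_R(R/I_n) = cn + b$ for all sufficiently large $n$. Therefore, for $n \gg 0$,
$$\frac{\ell_R(R/I_n)}{n^2} = \frac{cn + b}{n^2} = \frac{c}{n} + \frac{b}{n^2},$$
and letting $n \to \infty$ both summands on the right-hand side go to $0$; hence the limit is $0$, as claimed.

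I do not anticipate any real obstacle here: all the work is in Theorem \ref{linear}, and once the eventual linear formula for $\ell_R(R/I_n)$ is in hand the normalized limit is a one-line estimate. In fact the argument only uses that $\ell_R(R/I_n)$ is $O(n)$, so any eventual linear upper bound $\ell_R(R/I_n)\le Cn$ would suffice; routing through the exact formula of Theorem \ref{linear} is simply the most economical way to present it.

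It is worth contrasting this with the behavior recalled in the introduction: in the divisorial case $\ell_R(R/I_n)$ grows like a quadratic polynomial, so the analogous normalized limit is generically positive (and, in dimension three, can even be irrational by Example 6 of \cite{DaleSrinivas}). The point of Corollary \ref{zerolimit} is that when the residue field extension $V_\nu/\mathfrak{m}_\nu : R/\mathfrak{m}_R$ is \emph{finite}, the expected quadratic growth degenerates to linear growth, and the $n^2$-normalized limit collapses to zero.
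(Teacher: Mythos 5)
Your proof is correct and matches the paper's approach exactly: the paper also derives Corollary \ref{zerolimit} by dividing the eventual linear formula $\ell_R(R/I_n)=cn+b$ of Theorem \ref{linear} by $n^2$ and letting $n\to\infty$. Nothing further is needed.
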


In section \ref{infinitesection} of this paper, we show in Theorems \ref{example} and \ref{secondset_of_limits}, and Corollary \ref{secondirrational_limits}, that when we consider infinite algebraic residue field extensions, we have a more interesting result:\\

\begin{Theorem} \label{firstnon-poly}
There exists a regular local ring $(R,\mathfrak{m}_R)$ of dimension 2 and a discrete, rank 1 valuation $\nu$ of the quotient field of $R$ dominating $R$, such that the function $\ell_R(R/I_n)$ cannot be written as a quasi-polynomial plus a bounded function for large integers $n$. \\ %Here $I_n=\{f \in R \mid \nu(f) \geq n \}$.
\end{Theorem}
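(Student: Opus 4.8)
\textit{Overview of the plan.} I would realize $\nu$ explicitly as a pullback of the $t$-adic valuation under an embedding of $k(x,y)$ into a Laurent series field over an infinite algebraic extension of $k$, chosen "lacunarily" so that the residue field grows only at a very thin set of scales; then I would compute $\ell_R(R/I_n)$ from the leading terms of power series. Concretely, take $k$ a perfect field carrying an infinite tower of finite extensions $k=k_0\subsetneq k_1\subsetneq k_2\subsetneq\cdots$ inside an algebraic closure $\overline k$ (say $k=\mathbb F_p$, $k_i=\mathbb F_{p^{2^i}}$), and put $e_i=[k_i:k]$. Let $R=k[x,y]_{(x,y)}$, a regular local ring of dimension $2$ with $R/\mathfrak m_R=k$ and $K=k(x,y)$. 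Pick $c_i\in k_i$ with $k_{i-1}(c_i)=k_i$ and an increasing sequence $D_1<D_2<\cdots$ of integers with $D_i>e_i^{\,2}$ and $D_{i+1}/D_i\to\infty$, and form $\phi(t)=\sum_{i\ge 1}c_i t^{D_i}\in\overline k[[t]]$; choosing the $c_i$ generically (for $k$ countable, the choices destroying injectivity form a countable set, while those realizing the tower are uncountable) we may arrange that $K\to\overline k((t))$, $x\mapsto t$, $y\mapsto\phi(t)$, is injective. Let $\nu$ be the pullback of the $t$-adic valuation. It is discrete of rank $1$ ($\nu(x)=1$, values in $\mathbb Z$) and dominates $R$ (if $f=g/h$ with $h(0,0)\ne 0$ then $\nu(h)=0$, $\nu(g)\ge0$, and $\nu(f)>0$ forces $g\in\mathfrak m_R$); its residue field $V_\nu/\mathfrak m_\nu$, being generated by ratios of leading coefficients of elements of $R$, is algebraic over $k$. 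The crux of the construction — established below — is that $V_\nu/\mathfrak m_\nu=k(c_1,c_2,\dots)$ is an \emph{infinite} extension, so $\nu$ is exactly an algebraic discrete valuation to which Theorem \ref{linear} does \emph{not} apply.

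\textit{Reduction to leading coefficients, and the upper estimate.} Since $\nu(\mathfrak m_R)\ge1$ we have $\mathfrak m_R I_j\subseteq I_{j+1}$, so each $I_j/I_{j+1}$ is a $k$-vector space and $\ell_R(R/I_n)=\sum_{j=0}^{n-1}\dim_k(I_j/I_{j+1})$. For $f\in R$ let $\lambda_j(f)\in\overline k$ be the coefficient of $t^j$ in $f(t,\phi(t))$; then $\lambda_j$ is $k$-linear on $I_j$ with kernel $I_{j+1}$, so it identifies $I_j/I_{j+1}$ with the $k$-subspace $L_j:=\lambda_j(I_j)\subseteq\overline k$, and $\dim_k(I_j/I_{j+1})=\dim_k L_j$. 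Moreover $\dim_k L_j$ is non-decreasing, since $\nu(xf)=\nu(f)+1$ and $\lambda_{j+1}(xf)=\lambda_j(f)$ give $L_j\subseteq L_{j+1}$. Now let $i(j)$ be the index with $D_{i(j)}\le j<D_{i(j)+1}$. Expanding $f(t,\phi(t))$ for $f\in k[x,y]$, the coefficient of $t^j$ is a polynomial over $k$ in those $c_l$ with $D_l\le j$, i.e.\ in $c_1,\dots,c_{i(j)}$; hence $L_j\subseteq k_{i(j)}$ and $\dim_k L_j\le e_{i(j)}$. Summing over $j<n$ and using $D_{i+1}/D_i\to\infty$ and $D_i>e_i^2$ yields $\ell_R(R/I_n)=O\!\bigl(e_{i(n)}\,n\bigr)$, so that $\ell_R(R/I_n)/n^2=O(e_{i(n)}/D_{i(n)})\to 0$.

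\textit{The lower estimate and the conclusion.} Granting for the moment that $\dim_k L_j$ is unbounded: for any $M$ pick $J$ with $\dim_k L_J\ge 2M$; then monotonicity gives $\ell_R(R/I_{2J})\ge J\cdot\dim_k L_J\ge 2MJ$, so $\ell_R(R/I_n)/n\ge M$ at $n=2J$, hence $\limsup_n \ell_R(R/I_n)/n=\infty$. These two facts prove the theorem. Indeed, suppose $\ell_R(R/I_n)=Q(n)+\sigma(n)$ for $n\gg0$ with $Q$ a quasi-polynomial of degree $d$ and $\sigma$ bounded. If $d\ge2$, then along a suitable residue class mod the period, $Q(n)/n^2$ tends to a nonzero limit or to $\infty$, contradicting $\ell_R(R/I_n)/n^2\to 0$. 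If $d\le1$, then $\ell_R(R/I_n)/n$ is bounded, contradicting $\limsup_n\ell_R(R/I_n)/n=\infty$; in particular $d=0$ is impossible since $\ell_R(R/I_n)\ge n$ (as $\dim_k L_j\ge1$ for all $j$). So no such $Q,\sigma$ exist — and note this $\nu$ incidentally also satisfies $\lim_n\ell_R(R/I_n)/n^2=0$, the non-quasi-polynomial behavior coming entirely from the "locally linear with growing slopes" shape of $\ell_R(R/I_n)$.

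\textit{The main obstacle: the residue extension really grows.} What remains is exactly the postponed claim that $\dim_k L_j\to\infty$, i.e.\ that $V_\nu/\mathfrak m_\nu=k(c_1,c_2,\dots)$ is infinite over $k$. I would prove this by producing a generating sequence $P_1=y,P_2,P_3,\dots$ in $R$: given $P_l$ whose image $P_l(t,\phi(t))$ has leading coefficient a $k_{l-1}^{\times}$-multiple of $c_l$, take for $P_{l+1}$ a suitable element $x^{N}g_l\!\bigl(P_l/x^{\nu(P_l)}\bigr)\in R$, where $g_l$ is a $k[x,y]$-avatar of the minimal polynomial of $c_l$ over $k_{l-1}$ (its coefficients in $k_{l-1}$ rewritten via the earlier $P$'s); since $k$ is perfect, $k_l/k_{l-1}$ is separable, the relevant derivative is nonzero, and one checks that $P_{l+1}(t,\phi(t))$ has leading coefficient a unit times $c_{l+1}$. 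As a ratio of two leading coefficients lying in a common $L_j$, $c_{l+1}$ then lies in $V_\nu/\mathfrak m_\nu$; and multiplying $P_l$ by powers of itself and of $x,y$ and tracking degrees shows that $\dim_k L_j$ climbs to $e_l$ at some scale $<D_{l+1}$, which gives both $V_\nu/\mathfrak m_\nu\supseteq\bigcup_i k_i$ and the unboundedness used above (with the reverse inclusion from the previous paragraph, this also pins $V_\nu/\mathfrak m_\nu=\bigcup_i k_i$). This bookkeeping — the precise shape of the key polynomials $P_l$ and the verification that no accidental cancellation occurs — is the technical heart of the argument; it is routine in the theory of generating sequences of valuations (Abhyankar, Spivakovsky), but must be carried out carefully, as must the choice of $\phi$ making $K\to\overline k((t))$ injective.
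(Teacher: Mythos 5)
Your strategy is genuinely different from the paper's. The paper builds $\nu$ from a prescribed generating sequence $P_{i+1}=P_i^2-\sigma_i x^{2r_i}$ in $k[[x,y]]$ over a field with infinitely many independent transcendentals, invokes the Cutkosky--Vinh existence and basis theorems (Theorem 4.2 of \cite{Dale}) to compute the graded pieces $I_s/I_{s+1}$ exactly, and rules out quasi-polynomial behavior of $\ell_R(I_n/I_{n+1})$ by exploiting ever-longer constancy plateaus, then takes a difference to get the statement for $\ell_R(R/I_n)$. You instead define $\nu$ as the pullback of the $t$-adic valuation under a lacunary embedding $y\mapsto\phi(t)=\sum c_i t^{D_i}$, identify $I_j/I_{j+1}$ with the leading-coefficient space $L_j\subseteq\overline k$, prove $L_j\subseteq k_{i(j)}$ to get $\ell_R(R/I_n)/n^2\to 0$, and then the contradiction is a clean degree dichotomy once one knows $\dim_k L_j\to\infty$. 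That dichotomy, the upper bound, and the verification that $\nu$ is discrete, rank $1$, dominating $R$, and algebraic are all correct.

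The gap is exactly where you flag it: the unboundedness of $\dim_k L_j$, i.e.\ that $V_\nu/\mathfrak m_\nu=\bigcup_i k_i$ is infinite over $k$, is not proved, and this is the load-bearing half of the argument. Producing the polynomials $P_{l+1}$ requires rewriting the coefficients of the minimal polynomial $m_l$ (which lie in $k_{l-1}$, not $k$) as $k$-combinations of monomials in the earlier $P_j/x^{\nu(P_j)}$, clearing denominators by a power of $x$, and then verifying by a Taylor-type expansion that the leading coefficient of $P_{l+1}(t,\phi(t))$ is a unit times $c_{l+1}$ \emph{with no interference from the higher $c_m$'s and from the sub-leading terms of the earlier $P_j$'s}. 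That interference analysis depends quantitatively on the gaps $D_{i+1}-D_i$ and on the $\nu$-values accumulated so far; it is precisely the content the paper delegates to Theorem 4.2 of \cite{Dale}, and that theorem does not apply off-the-shelf in your setting, since you start from the valuation and must construct a generating sequence for it a posteriori, over a chain of coefficient fields rather than a single one. A secondary, smaller point: the countability argument for choosing $\phi$ transcendental over $k(t)$ is fine for $k=\mathbb F_p$, but it should be checked jointly with the constraints $k_{l-1}(c_l)=k_l$ (easy for a tower of quadratic extensions of finite fields, but it deserves a sentence). Until the generating-sequence bookkeeping is actually carried out, the lower estimate $\limsup\ell_R(R/I_n)/n=\infty$, and hence the theorem, is unproved.
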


We remind the reader that a quasi-polynomial in the indeterminate $n$ is an expression of the form $a_d(n)n^d + a_{d-1}(n)n^{d-1} + \dots + a_1(n)n + a_0(n)$, where $d$ is the degree of the quasi-polynomial, and the coefficients $a_d(n), a_{d-1}(n), \dots, a_1(n), a_0(n)$ are periodic functions of $n$.\\

\begin{Theorem} \label{firstset_of_limits}
Let $C \in \mathbb{R} \cap [0,\frac{1}{2}]$ be given. There exists a regular local ring $(R,\mathfrak{m}_R)$ of dimension 2 and a discrete, rank 1 valuation $\nu$ of the quotient field of $R$ dominating $R$, such that each of the functions $\ell_R(R/I_n)$ and $\ell_R(I_n/I_{n+1})$ is not a quasi-polynomial plus a bounded function for large integers $n$ and such that $\lim_{n \rightarrow \infty} {\frac{\ell_R(I_n/I_{n+1})}{n}} = C$ and $\lim_{n \rightarrow \infty} {\frac{\ell_R(R/I_{n})}{n^2}} = \frac{C}{2}$.\\ %are irrational (even transcendental) positive numbers. %More generally, $\lim_{n \rightarrow \infty} {\frac{\ell_R(R/I_{n})}{n^2}}$ can be chosen to be any number in $\mathbb{R} \cap [0,\frac{1}{8})$.
\end{Theorem}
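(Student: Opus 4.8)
The plan is to realize $R$ and $\nu$ by a single explicit construction depending on a rapidly increasing integer sequence whose growth rate is then tuned to $C$. Fix a ground field $k$ with $\operatorname{char} k\neq 2$ carrying an infinite ascending chain of quadratic extensions $k=L_0\subset L_1\subset L_2\subset\cdots$, $L_j=L_{j-1}(\alpha_j)$ with $\alpha_j^2\in k$, and set $L_\infty:=\bigcup_j L_j$; concretely take $k=\mathbb{Q}$, $\alpha_j=\sqrt{p_j}$ for the $j$-th prime, so $[L_j:k]=2^j$. Pick a strictly increasing sequence $2\le\beta_1<\beta_2<\cdots$ of integers with $\beta_{j+1}\ge 2\beta_j$ (the exact choice deferred), put $\varphi(x)=\sum_{j\ge 1}\alpha_j x^{\beta_j}\in L_\infty[[x]]$, and arrange $\varphi$ to be transcendental over $k(x)$. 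Let $R=k[x,y]_{(x,y)}$, a two-dimensional regular local ring with residue field $k$ and fraction field $K=k(x,y)$, and let $\nu$ be the valuation of $K$ given by $\nu(f)=\operatorname{ord}_x f(x,\varphi(x))$. Then $\nu$ is a discrete rank-one valuation dominating $R$, with value group $\mathbb{Z}$, $\nu(x)=1$ and $\nu(y)=\beta_1$; reading off the residues of $y/x^{\beta_1}$, of $(y^2-\alpha_1^2x^{2\beta_1})/x^{\beta_1+\beta_2}$, and so on — each numerator a polynomial over $k$ — shows $V_\nu/\mathfrak{m}_\nu\supseteq L_\infty$, and with $(\beta_j)$ chosen generically one has equality, so $\nu$ is algebraic with infinite residue extension, which is the case the theorem concerns.

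For the length computation, $\nu(x)=1$ and $\nu(y)=\beta_1\ge 1$ give $\mathfrak{m}_R I_n\subseteq I_{n+1}$, so each $I_n/I_{n+1}$ is a $k$-vector space; since $I_n\supseteq\mathfrak{m}_R^n$ each $R/I_n$ has finite length and $\ell_R(R/I_n)=\sum_{m=0}^{n-1}\dim_k(I_m/I_{m+1})$. The map $f\mapsto[f/x^n]$ embeds $I_n/I_{n+1}$ as a $k$-subspace $W_n$ of $V_\nu/\mathfrak{m}_\nu=L_\infty$, with $W_0\subseteq W_1\subseteq\cdots$ and $\bigcup_n W_n=L_\infty$, and extracting the coefficient of $x^n$ from $f(x,\varphi(x))$ identifies $W_n=\operatorname{span}_k\{\alpha_{i_1}\cdots\alpha_{i_r}:\beta_{i_1}+\cdots+\beta_{i_r}\le n\}$. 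Using $\alpha_j^2\in k$ and the $k$-linear independence of the products $\prod_{j\in S}\alpha_j$, this collapses to
\[
\ell_R(I_n/I_{n+1})=\dim_k W_n=\#\Bigl\{\,S\subset\mathbb{Z}_{\ge 1}\ \text{finite}:\textstyle\sum_{j\in S}\beta_j\le n\,\Bigr\}=:N(n),
\]
the subset-sum counting function of $(\beta_j)$. Because $\beta_j\ge 2^j$ one has $N(n)\le\#\{S:\sum_{j\in S}2^j\le n\}=\lfloor n/2\rfloor+1$, which is exactly why $C$ cannot exceed $\tfrac12$.

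It remains to choose $(\beta_j)$. Taking $\beta_j=2^j$ gives $N(n)=\lfloor n/2\rfloor+1$, hence $\ell_R(I_n/I_{n+1})\sim\tfrac12 n$ and $\ell_R(R/I_n)\sim\tfrac14 n^2$; scaling and interleaving — at the other extreme a super-geometric choice such as $\beta_j=2^{2^j}$, for which $N(n)=O(\log n)$ — produces, for every $C\in[0,\tfrac12]$, a sequence with $N(n)/n\to C$, and then $\lim_n\ell_R(I_n/I_{n+1})/n=C$ and $\lim_n\ell_R(R/I_n)/n^2=C/2$ follows by summation. Finally, the existence of the limit $C$ already forces any quasi-polynomial-plus-bounded expression for $\ell_R(I_n/I_{n+1})$ to have the shape $Cn+(\text{bounded})$, so it suffices to make the deviation $N(n)-Cn$ \emph{unbounded}; this is arranged by perturbing the chosen sequence along a sufficiently sparse, unbounded set of indices so that $N(n)-Cn$ is still $o(n)$ (say of order $\sqrt n$ along a subsequence) but unbounded, which preserves both limits while destroying quasi-polynomiality, and then $\ell_R(R/I_n)-\tfrac{C}{2}n^2=-\tfrac{C}{2}n+\sum_{m<n}(N(m)-Cm)$ is superlinear, hence likewise not a quasi-polynomial plus a bounded function.

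The principal obstacle is the structural input behind the displayed formula for $\ell_R(I_n/I_{n+1})$: one must verify that $\varphi$ can be chosen transcendental over $k(x)$ with $V_\nu/\mathfrak{m}_\nu$ exactly $L_\infty$, and — the delicate point — that $\{x=P_0,\ y=P_1,\ y^2-\alpha_1^2x^{2\beta_1}=P_2,\dots\}$ is a \emph{complete} generating sequence for $\nu$, so that $\operatorname{gr}_\nu R$ has no unexpected elements and the least value realizing a prescribed monomial $\prod_{j\in S}\alpha_j$ is precisely $\sum_{j\in S}\beta_j$ (equivalently, no low-complexity integer relations among the $\beta_j$ spoil the collapse). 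This is a key-polynomial computation in $\operatorname{gr}_\nu R$ that must be made to hold uniformly across the chosen sequences, including the degenerate endpoints $C=0$ and $C=\tfrac12$; once it is in place, the asymptotics of $N(n)$ and the bookkeeping that turns a controlled sublinear oscillation into a genuine failure of quasi-polynomiality are routine.
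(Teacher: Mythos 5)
Your overall strategy — interpret $\ell_R(I_n/I_{n+1})$ as a subset-sum counting function of a tunable integer sequence and then choose that sequence to force the limit $C$ — is exactly the mechanism the paper uses. But the explicit realization via $\varphi(x)=\sum_j\alpha_j x^{\beta_j}$ is incorrect, and the incorrectness is precisely the ``delicate point'' you name at the end and then dismiss as routine bookkeeping.

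The error is that you conflate the Puiseux exponents $\beta_j$ of $\varphi$ with the values $r_j:=\nu(P_j)$ of the key polynomials. For your $\varphi$, $\nu(P_1)=\nu(y)=\beta_1$, but already $P_2=y^2-\alpha_1^2x^{2\beta_1}$ satisfies $P_2(x,\varphi)=2\alpha_1\alpha_2x^{\beta_1+\beta_2}+\cdots$, so $\nu(P_2)=\beta_1+\beta_2\neq\beta_2$. The correct statement is that $\ell_R(I_n/I_{n+1})$ counts finite subsets $S$ with $\sum_{j\in S}r_j\leq n$, not $\sum_{j\in S}\beta_j\leq n$. A concrete contradiction: take $\beta_j=2^j$ and $n=4$. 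Your $N(4)=\#\{\emptyset,\{1\},\{2\}\}=3$. But for any $f\in R$ with $\nu(f)\geq 4$, vanishing of the $x^2$-coefficient $f_{20}+f_{01}\alpha_1$ of $f(x,\varphi)$ forces $f_{01}=0$ since $1,\alpha_1$ are $k$-independent; the $x^4$-coefficient then lands in $k+k\alpha_1$, so $\dim_k(I_4/I_5)=2$. The element $\alpha_2$ never appears alone in the associated graded ring for your $\varphi$ — only products like $\alpha_1\alpha_2$, and at level $\beta_1+\beta_2$, not $\beta_2$.

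Fixing this requires choosing $\varphi$ so that each $P_{j+1}=P_j^2-c_jx^{2r_j}$ has a prescribed value, which is a nontrivial inverse problem for the Puiseux coefficients; it is exactly what the generating-sequence algorithm (Theorem 4.2 of the paper's reference \cite{Dale}) solves, under the genuinely necessary constraint $r_{j+1}>2r_j$ (your weaker $\beta_{j+1}\geq 2\beta_j$ with equality allowed would not be admissible even after the repair). Two further gaps: (i) your proposed ``perturbation'' to defeat quasi-polynomiality does not respect the structure — $\ell_R(I_n/I_{n+1})$ must remain a subset-sum function of an admissible $r$-sequence, not an arbitrary tweak of one — and in fact no perturbation is needed, since $r_{j+1}>2r_j$ together with $r_{j+1}-\sum_{k\leq j}r_k\to\infty$ already forces constant slabs of unbounded length and hence unbounded deviation from any linear function; (ii) tuning the sequence to hit a prescribed $C\in[0,\tfrac12]$ is more delicate than ``scaling and interleaving'' — the paper encodes the dyadic expansion of $\tfrac1C-2$ into the offsets $e_j=r_{j+1}-2r_j-1\in\{0,1\}$ and then needs a two-sided inductive estimate on $\ell_R(I_n/I_{n+1})/n$ across slabs to obtain the limit.
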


\begin{Corollary} \label{firstirrational_limits}
There exists a regular local ring $(R,\mathfrak{m}_R)$ of dimension 2 and a discrete, rank 1 valuation $\nu$ of the quotient field of $R$ dominating $R$, such that each of the functions $\ell_R(R/I_n)$ and $\ell_R(I_n/I_{n+1})$ is not a quasi-polynomial plus a bounded function for large integers $n$ and such that each of $\lim_{n \rightarrow \infty} {\frac{\ell_R(I_n/I_{n+1})}{n}}$ and $\lim_{n \rightarrow \infty} {\frac{\ell_R(R/I_{n})}{n^2}}$ are irrational (even transcendental) positive numbers.\\  
\end{Corollary}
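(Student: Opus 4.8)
The plan is to derive this immediately from Theorem \ref{firstset_of_limits} by making a suitable choice of the constant $C$. First I would note that the algebraic real numbers form a countable set, so the interval $(0,\frac{1}{2}]$ contains transcendental numbers; a concrete admissible choice is $C=1/e$ (or $C=1/\pi$), which lies strictly between $0$ and $\frac{1}{2}$ and is transcendental. Indeed $e$ and $\pi$ are transcendental by the Lindemann--Weierstrass theorem, and the reciprocal of a nonzero transcendental number is again transcendental, since if $1/e$ were a root of a nonzero polynomial with rational coefficients then clearing denominators and reversing the coefficients would exhibit $e$ as a root of a nonzero polynomial with rational coefficients.

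Next I would apply Theorem \ref{firstset_of_limits} with this value of $C$. This furnishes a $2$-dimensional regular local ring $(R,\mathfrak{m}_R)$ and a discrete rank $1$ valuation $\nu$ of its quotient field dominating $R$ such that neither $\ell_R(R/I_n)$ nor $\ell_R(I_n/I_{n+1})$ is a quasi-polynomial plus a bounded function for $n \gg 0$, and such that
$$\lim_{n \rightarrow \infty} \frac{\ell_R(I_n/I_{n+1})}{n} = C, \qquad \lim_{n \rightarrow \infty} \frac{\ell_R(R/I_n)}{n^2} = \frac{C}{2}.$$

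Finally I would observe that $C$ is a positive transcendental number by construction, and that $\frac{C}{2}$ is likewise positive and transcendental: were $\frac{C}{2}$ algebraic, then $C = 2 \cdot \frac{C}{2}$ would be algebraic, contradicting the choice of $C$. Hence both limits are positive transcendental, and in particular irrational, numbers, which is precisely the assertion. There is essentially no obstacle in this deduction beyond quoting the transcendence of a single explicit constant; all of the substantive content — the construction of $R$ and $\nu$ and the control of the asymptotics of $\ell_R(R/I_n)$ and $\ell_R(I_n/I_{n+1})$ — is already contained in Theorem \ref{firstset_of_limits}.
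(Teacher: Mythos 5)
Your proof is correct and follows essentially the same route as the paper: both specialize Theorem \ref{firstset_of_limits} to a judiciously chosen $C$, noting that nothing in that theorem's proof constrains $C$ to be rational. If anything your write-up is a bit more careful than the paper's one-line proof, since the paper only says to pick $C\in(0,\tfrac12)\setminus\mathbb Q$ while you pick a concrete transcendental $C$ and explicitly observe that $C/2$ is then transcendental as well, which is what the ``even transcendental'' clause actually requires.
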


%  \begin{Theorem}[-]
%  For each $C \in [0,\frac{1}{8}),$  there exists an algebraic discrete valuation dominating a regular local ring $R$ of dimension two such that:

%for large $n$, $\ell_R(R/I_n)$ is not a polynomial $+$ a bounded function, or even a quasi polynomial $+$ a bounded function  (a quasi polynomial is a polynomial whose coefficients are periodic functions in $n$), and
%
%For each $C \in [0,\frac{1}{8}),$ there exists a regular local ring $(R,\mathfrak{m}_R)$ of dimension 2 and an algebraic discrete valuation $\nu$ dominating $R$ such that 
%$$\lim_{n \rightarrow \infty} {\frac{\ell_R(R/I_{n})}{n^2}} = C.$$ 
% \end{Theorem}

%

%
  
% \begin{Theorem} \label{all_multiplicities}
%For each $C \in [0,\frac{1}{8}),%\footnote{... am checking the optimal upper bound...}
%$ there exists a regular local ring $(R,\mathfrak{m}_R)$ of dimension 2 and an algebraic discrete valuation $\nu$ dominating $R$ such that $$\lim_{n \rightarrow \infty} {\frac{\ell_R(R/I_{n})}{n^2}} = C.$$ \\
%\end{Theorem}

%\begin{Corollary} \label{all_limits}
%In particular, $$\lim_{n \rightarrow \infty} {\frac{\ell_R(R/I_{n})}{n^2}}$$ can be irrational (and even transcendental).\\
%\end{Corollary}

% \begin{Theorem} \label{non-poly_intro}
%  For these examples, we further have that for large $n$, $\ell_R(R/I_n)$ is not a polynomial $+$ a bounded function, or even a quasi polynomial $+$ a bounded function (a quasi polynomial is a polynomial whose coefficients are periodic functions in $n$).\\
% \end{Theorem}

Next, we discuss some questions that arise from the methods used in this paper. Since $\{I_n \}$ is a graded family of $\mathfrak{m}_R$-primary ideals in the two dimensional local ring $R$, the limit $$\lim_{n \to \infty} \frac{\ell_R(R/I_n)}{n^2} $$ is known to exist in equicharacteristic regular local rings by Mustata \cite{Mustata}, and they exist in arbitrary regular local rings (even analytically irreducible local rings) by Cutkosky \cite{Dalemult} and \cite{Dale_final_mult}. In light of these results, we may ask the following question:\\

 \begin{Question}
  Which numbers $C$ are realizable as limits $$\lim_{n \to \infty}{\frac{\ell_R(R/I_n)}{n^2}} $$ from an algebraic discrete valuation on a regular local ring $R$ of dimension two?\\
 \end{Question}

In this paper, we show that all the numbers in the real interval $[0,\frac{1}{4}]$ are realizable as limits. Since $I_1 = \mathfrak{m}_R$, and $I_1^n \subset I_n$ for all $n$, by comparison with the Hilbert-Samuel multiplicity we have the upper bound $$\lim_{n \to \infty}{\frac{\ell_R(R/I_n)}{n^2}} \leq \frac{e(R)}{2} = \frac{1}{2}.$$

  In all of our examples, we have the stronger result that 
  
  \begin{equation} \label{alphalimit}
  \lim_{n \to \infty}{\frac{\ell_R(I_n/I_{n+1})}{n}}  
  \end{equation}
exists, and every number in the real interval $[0,\frac{1}{2}]$ can be obtained as a limit.\\
  
Thus we may ask the following question:\\
  
  \begin{Question} \label{question_limit}
   Does $\lim_{n \to \infty}{\frac{\ell_R(I_n/I_{n+1})}{n}}$ always exist for an algebraic discrete valuation dominating a regular local ring of dimension two?\\
  \end{Question}

It is known that %the limit $\lim_{n \to \infty}{\frac{\ell_R(I_n/I_{n+1})}{n}}$ 
this limit does not generally exist if $\{I_n\}$ is a filtration of $\mathfrak{m}_R$-primary ideals (Theorem 4.6 of \cite{Dale_asymptotic_mult}), so a positive answer to Question \ref{question_limit} would use special properties of the valuation ideals $I_n$. \\
 
Now we briefly discuss the methods used in this paper. In constructing our examples, we make use of an algorithm of \cite{Dale} that generalizes an algorithm of \cite{Spivakovsky} for constructing generating sequences of valuations. The algorithm in \cite{Dale} is valid in arbitrary two dimensional regular local rings.  This technique of generating sequences is also used in \cite{GHK} and \cite{GK} to find stable toric forms of extensions of associated graded rings along a valuation in finite defectless extensions of algebraic function fields of dimension two. \\

We take as our ground field $k=L\left( \left\{ \sigma_i \right\} \right)$, where $L$ is an arbitrary field and $\left\{\sigma_i\right\}_{i \in \mathbb{Z}_+}$ is a set of algebraically independent elements over $L$.  We take for our ring $R:=k[[x,y]]$  and inductively define a generating sequence of our valuation, $\{P_i\}_{i \geq 0} \subset R$ by $P_0=x, P_1=y, \text{ and } P_{i+1}=P_i^2-\sigma_ix^{2r_i} \text{ for } i \geq 1.$  Provided that the $r_i \in \mathbb{N}$ are such that $r_0=1 \text{ and } r_{i+1} > 2r_i \text{ for } i \geq 1,$ the algorithm ensures that there exists a unique discrete valuation $\nu$ of $QF(R)$ dominating $R$, such that $\nu(P_i)=r_i \text{ for } i \in \mathbb{N}.$ The residue field of $\nu$ is naturally isomorphic to $k(\{\sqrt{\sigma_i}\})$.\\

In the associated graded algebra $\bigoplus_{s \geq 0} I_s/I_{s+1} ,$ the lengths of the graded components $I_s/I_{s+1}$ are given by the number of distinct monomials in the generators $\{P_i\}$ that have value equal to $s$.\\

 That is, a $k$-basis of $I_s/I_{s+1}$ is:\\ 

\begin{multline} $$\mathcal{B}_s=\{[P_0^{n_0}P_1^{n_1}\cdots P_i^{n_i}] \mid i \in \mathbb{N}, n_0,\ldots, n_i \in \mathbb{N}, \\ n_j \in \{0,1\} \text{ for } 1 \leq j \leq i, \text{ and } n_0+n_1r_1+\cdots + n_ir_i = s\}.$$ \end{multline}  \\

 By multiplying basis elements through by an element of suitable $\nu$-value in $R$ and checking linear independence conditions, we show that $\ell_R(I_s/I_{s+1})$ is a non-decreasing function of $s$. By choosing the $\nu$-values $r_i$ of the generators, we can control the growth of $\ell_R(I_s/I_{s+1})$. By imposing a combinatorial condition on the sequence $\{r_i\}$, that $r_{i+1} > r_0 + r_1 + \cdots + r_i$ for all $i \geq 0$, we can ensure that there is a partition of $\mathbb{R}_{\geq 0}$ of the form $\{[a_k,b_k)\}_{k \in \mathbb{Z}_{>0}}$, such that on every interval $[a_k,b_k)$, the function $\ell_R(I_s/I_{s+1})$ is constant, and takes on distinct values for every $k$. This part of the construction already shows that the Hilbert function is highly non-polynomial.  \\

%
%\begin{enumerate}[1.]

% \item The lengths of the intervals, $|b_k-a_k|$, each satisfy some recursive lower bounds, but may otherwise be chosen to be arbitrarily large.

%\end{enumerate}
%

%\item The function $\ell_R(I_n/I_{n+1})$ is nondecreasing on $\mathbb{Z}_{\geq 0}$, and is constant on each interval $[a_k,b_k)$. 
% \item In fact, there is a subsequence $\{k_l\} \subset \mathbb{N}$ such that the graph of the function $\ell_R(I_n/I_{n+1})$ satisfies $$\ell_R(I_n/I_{n+1}) = 2^l + \min\{2^l, \ell_R(I_{n-a_{k_l}}/I_{n+1-a_{k_l}}) \},$$ when $n \in (a_{k_l},b_{k_l})$, so that the graph recursively redraws its initial segment starting at the point $(2^l,a_{k_l})$.

Next, we show that for these examples, the limit $\frac{\ell_R(I_n/I_{n+1})}{n}$ exists, and deduce the existence of $\frac{\ell_R(R/I_{n})}{n^2}$ as a consequence. \\

 The construction places the set of multiplicities $(0,\frac{1}{4}]$ in bijective correspondence with $[0,\infty)$. This is done by specifying the values of $r_{i+1} - (2r_i + 1)$.  More precisely, to construct the valuation to have multiplicity $B \in (0,\frac{1}{4}]$, we set the difference $r_{i+1} - (2r_i + 1)$ to be equal to the coefficient of $\frac{1}{2^{i-1}}$ in the $2$-adic expansion of $\frac{2}{B} - 2$.  For the remaining case, taking $r_{i+1} - (2r_i + 1)=2^{i-1}$ for $i \geq 0$ gives an example where the multiplicity is $0$. \\

\section{Notation}

Let $\NN$ denote the set $\{0,1,2,\dots \}$ and $\ZZ_+$ denote the set $\{1,2,3,\dots \}$. Suppose that $\nu$ is a discrete valuation dominating a local ring $(R,\mathfrak{m}_R)$. Let $I_n = \{f \in R \mid \nu(f) \geq n \}$. If $f \in I_n$, denote by $[f]$ its image in $I_n/I_{n+1}$. %The $R/\mathfrak{m}_R$-vector space $I_n/I_{n+1}$ is generated by the set $\{[f] \mid f \in I_n \setminus I_{n+1} \}$. 
If $r,s \in \NN$, $[x] \in I_r/I_{r+1}$ and $[y] \in I_s/I_{s+1}$, denote by $[x]\cdot[y]$ their product $(x+I_{r+1})(y+I_{s+1}) \in I_{r+s}/I_{r+s+1}$  in the associated graded ring $\oplus_{n \in \NN}{I_n/I_{n+1}}$. In particular if $c \in R/\mathfrak{m}_R$, we will write $c\cdot[x] \in I_r/I_{r+1}$. Also, if $c \in R/\mathfrak{m}_R$ and $[d] \in V_{\nu}/\mathfrak{m}_{\nu}$, then $c\cdot[d] = (c+\mathfrak{m}_R)(d+\mathfrak{m}_{\nu}) \in V_{\nu}/\mathfrak{m}_{\nu}$. If $R/\mathfrak{m}_R \cong k \subset R$, then $\ell_R(I_s/I_{s+1})=dim_{R/\mathfrak{m}_R}(I_s/I_{s+1})$. In this case (which applies to this paper), we will use the two notations interchangeably. Finally, we let $\Gamma_{\nu}=\{\nu(f) \mid f\ \in QF(R) \setminus \{0\} \}$ and  $S^R(\nu) = \{\nu(f) \mid f \in R\ \setminus \{0\} \} \subset \Gamma_{\nu} $ denote the value group of $\nu$ and the value semigroup of $\nu$ respectively. \\ %Identify $R/\mathfrak{m}_R$ with $k \subset R \setminus m$ via the isomorphism $R/\mathfrak{m}_R \cong k$.\\

\section{Results for finite residue field extensions} \label{finitesection}

In this section, we prove Theorem \ref{linear} and Corollary \ref{zerolimit} from the introduction.\\ %We suppose that $\nu$ is an algebraic discrete valuation dominatining a local domain $(R,\mathfrak{m}_R)$. We assume that $\Gamma_{\nu} = \mathbb{Z}$ and that the residue field extension $V_{\nu}/\mathfrak{m}_{\nu} : R/\mathfrak{m}_R$ is of finite degree.   \\

\begin{Lemma} \label{persist}
Suppose that $\nu$ is an algebraic discrete valuation dominating a local domain $(R,\mathfrak{m}_R)$. Let $n \in \mathbb{N}$. Suppose that $dim_{R/\mathfrak{m}_R}(I_n/I_{n+1})=r$. Then for all $k \in S^R(\nu)$, we have that $dim_{R/\mathfrak{m}_R}(I_{n+k}/I_{n+k+1}) \geq r$.
\end{Lemma}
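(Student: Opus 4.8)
The plan is to produce, from a $k$-basis of $I_n/I_{n+1}$, a linearly independent set of the same cardinality inside $I_{n+k}/I_{n+k+1}$ by multiplying through by a single element $g \in R$ of value exactly $k$. Since $k \in S^R(\nu)$, choose $g \in R \setminus \{0\}$ with $\nu(g) = k$. Let $f_1,\dots,f_r \in I_n$ be elements whose classes $[f_1],\dots,[f_r]$ form an $R/\mathfrak{m}_R$-basis of $I_n/I_{n+1}$. Each $f_j g$ lies in $I_{n+k}$ because $\nu(f_j g) = \nu(f_j) + \nu(g) \geq n + k$, so we get classes $[f_j g] \in I_{n+k}/I_{n+k+1}$. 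The claim is that these $r$ classes are linearly independent over $R/\mathfrak{m}_R$, which immediately gives $\dim_{R/\mathfrak{m}_R}(I_{n+k}/I_{n+k+1}) \geq r$.

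First I would set up the linear independence argument. Suppose $\sum_{j=1}^r \overline{c_j} \cdot [f_j g] = 0$ in $I_{n+k}/I_{n+k+1}$, where $c_j \in R$ represent classes $\overline{c_j} \in R/\mathfrak{m}_R$, not all in $\mathfrak{m}_R$. This means $\nu\bigl(\sum_j c_j f_j g\bigr) \geq n+k+1$, i.e. $\nu\bigl(g \sum_j c_j f_j\bigr) \geq n+k+1$. Since $\nu(g) = k$, this forces $\nu\bigl(\sum_j c_j f_j\bigr) \geq n+1$, so $\sum_j \overline{c_j}\cdot[f_j] = 0$ in $I_n/I_{n+1}$. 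By linear independence of the $[f_j]$, every $\overline{c_j} = 0$, contradiction. Hence the $[f_j g]$ are independent.

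The one subtlety worth spelling out is why $\nu(g) = k$ together with $\nu(gh) \geq n+k+1$ gives $\nu(h) \geq n+1$ for $h = \sum_j c_j f_j$: this is just additivity of the valuation, $\nu(gh) = \nu(g) + \nu(h)$, valid since $R$ is a domain and $\nu$ is a valuation on $QF(R)$, provided $h \neq 0$; and if $h = 0$ the conclusion $\sum_j \overline{c_j}\cdot[f_j]=0$ is immediate. I do not expect a genuine obstacle here — the argument is the standard "multiply a basis by an element of the value semigroup and check independence via additivity of $\nu$" trick, and the lemma is stated precisely so that this works. The only thing to be careful about is bookkeeping: ensuring the chosen $g$ is genuinely in $R$ (which is exactly what $k \in S^R(\nu)$ provides) rather than merely in $QF(R)$, and keeping the distinction between elements of $R$ and their residues in $R/\mathfrak{m}_R$ straight, as fixed in the Notation section.
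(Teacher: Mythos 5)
Your proposal is correct and is essentially identical to the paper's own proof: both multiply a fixed basis of $I_n/I_{n+1}$ by a single element $g \in R$ with $\nu(g)=k$ (available precisely because $k \in S^R(\nu)$) and use additivity of $\nu$ to transfer a linear dependence relation in $I_{n+k}/I_{n+k+1}$ back to one in $I_n/I_{n+1}$. Your aside about the $h=0$ subcase only makes the argument a touch more careful than what the paper writes; nothing else differs.
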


\begin{proof}
Suppose that $e_1, \dots, e_r$ are elements of $I_n \setminus I_{n+1}$ such that $\left[e_1\right], \dots, \left[e_r\right]$ form a basis for $I_n/I_{n+1}$ over $R/\mathfrak{m}_R$. Let $k \in S^R(\nu)$ be given. Then there exists $g \in R$ such that $\nu(g)=k$. Hence $ge_1, \dots, ge_r$  are elements of $I_{n+k} \setminus I_{n+k+1}$ with nonzero residues $\left[ge_1 \right], \dots, \left[ ge_r \right]$ in $I_{n+k}/I_{n+k+1}$. \\

Suppose that $\sum_{i=1}^r{c_i\left[ge_i\right]}=\left[0 \right]$ in $I_{n+k}/I_{n+k+1}$, where $c_i \in R/\mathfrak{m}_R$ for $1 \leq i \leq r$. Then $\nu(\sum_{i=1}^r{c_ige_i})>n+k$. Thus $\nu(\sum_{i=1}^r{c_ie_i})=\nu(\sum_{i=1}^r{c_ige_i})-\nu(g)>n+k-k=n$, whence $\sum_{i=1}^r{c_i\left[e_i\right]}=\left[0 \right]$ in $I_n/I_{n+1}$. Since $\left[e_1 \right], \dots, \left[ e_r \right]$ formed a basis for $I_n/I_{n+1}$, we must have $c_i=0$ for all $1 \leq i \leq r$. Hence $\left[ge_1\right], \dots, \left[ge_r \right]$ are linearly independent over $R/\mathfrak{m}_R$, and so $dim_{R/\mathfrak{m}_R}(I_{n+k}/I_{n+k+1}) \geq r$.\\
\end{proof}

\begin{Lemma} \label{semigroup}
Suppose that $\nu$ is an algebraic discrete valuation dominating a local domain $(R,\mathfrak{m}_R)$. There exists $n_0 \in \NN$ such that $n \in S^R(\nu)$ for all $n \geq n_0$.\\
\end{Lemma}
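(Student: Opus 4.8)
The plan is to show that $S^R(\nu)$ is a numerical semigroup with finite complement in $\NN$, i.e., a cofinite subset of $\NN$ closed under addition. First I would observe that $S^R(\nu) \subset \NN$, that $0 \in S^R(\nu)$ (taking a unit of $R$), and that $S^R(\nu)$ is closed under addition since $\nu(fg) = \nu(f) + \nu(g)$ for $f, g \in R \setminus \{0\}$. A standard fact about subsemigroups of $\NN$ is that such a semigroup contains all sufficiently large integers as soon as the greatest common divisor of its elements is $1$; so the real content is to verify that $\gcd S^R(\nu) = 1$.

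To see that $\gcd S^R(\nu) = 1$, I would argue as follows. Since $\nu$ is discrete, its value group $\Gamma_\nu$ is order isomorphic to $\ZZ$; normalize so that $\Gamma_\nu = \ZZ$ and the positive generator is $1$. Because $\nu$ dominates $R$, every element of $\mathfrak{m}_R \setminus \{0\}$ has strictly positive value, and every element of $R \setminus \{0\}$ has nonnegative value, so $S^R(\nu) \subset \NN$. Now $V_\nu$ is a localization-type overring of $R$ inside $K = QF(R)$; more precisely, since $\nu$ is discrete hence $V_\nu$ is a Noetherian (DVR) valuation ring with $R \subset V_\nu \subset K$, and $K = QF(R)$, any element $t \in K$ with $\nu(t) = 1$ can be written as $t = f/g$ with $f, g \in R \setminus \{0\}$, giving $1 = \nu(f) - \nu(g)$ with $\nu(f), \nu(g) \in S^R(\nu)$. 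Hence the subgroup of $\ZZ$ generated by $S^R(\nu)$ is all of $\ZZ$, which forces $\gcd S^R(\nu) = 1$.

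Combining these two points: $S^R(\nu)$ is a subsemigroup of $\NN$ containing $0$ whose generated subgroup is $\ZZ$, so by the standard numerical-semigroup fact (Schur/Chicken McNugget), there exists $n_0 \in \NN$ such that every integer $n \geq n_0$ lies in $S^R(\nu)$, which is exactly the claim. I do not expect a serious obstacle here; the only point requiring a little care is the representation $t = f/g$ with $f,g \in R$ of an element of value $1$, which uses nothing more than $K = QF(R)$ and that $V_\nu$, being a subring of $K$ containing $R$, has $K$ as its fraction field so that every value in $\Gamma_\nu$ is a difference of two values in $S^R(\nu)$. One could alternatively cite the classical fact that $S^R(\nu)$ generates $\Gamma_\nu$ as a group whenever $\nu$ dominates $R$ and $QF(R) = K$.
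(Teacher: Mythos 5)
Your proof is correct and takes essentially the same route as the paper: the key step in both is to observe that every element of $K^*$ is a ratio $f/g$ with $f,g\in R\setminus\{0\}$, so $S^R(\nu)$ generates $\Gamma_\nu=\ZZ$ as a group. The only difference is cosmetic: where you then invoke the standard numerical-semigroup (Chicken McNugget) fact as a black box, the paper proves it in-line by taking $t,u\in S^R(\nu)$ with $t-u=1$ and exhibiting the explicit bound $n_0=u^2$ via $u^2+i=tr+u(u+k-r)$ for $i=ku+r$.
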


\begin{proof}
Since $\nu$ is a valuation of the quotient field of $R$, $S^R(\nu)=\{\nu(f) \mid f \in R \setminus \{0\}\}$ generates $\Gamma_{\nu}$. To see this, let $\frac{f}{g} \in K^*$, with $f,g \in R \setminus \{0\}$. Then $\nu(\frac{f}{g})=\nu(f)-\nu(g)$. Hence, any element of $\Gamma_{\nu}$ is the difference of two elements in $S^R(\nu)$. In particular, since $\Gamma_{\nu}=\ZZ$, there exist $t,u \in S^R(\nu)$ such that $t-u=1$.\\ 

If $u=0$, then $t=1$, $S^R(\nu)=\NN$ and the lemma follows by taking $n_0=0$. Suppose that $u>0$ and let $i \in \NN$. Then there exist $k,r \in \NN$ such that $i=ku+r$, $0 \leq r < u$, by the division algorithm. Hence $u^2 + i=tr+u(u+k-r) \in S^R(\nu)$. Thus for all $n \geq u^2$, $n \in S^R(\nu)$. Thus the lemma follows by taking $n_0=u^2$.\\
\end{proof}

\begin{Lemma} \label{max}
Suppose that $\nu$ is an algebraic discrete valuation dominating a local domain $(R,\mathfrak{m}_R)$, and that $V_{\nu}/\mathfrak{m}_{\nu}$ is finite over $R/\mathfrak{m}_R$. For all $n \in \NN$, $dim_{R/\mathfrak{m}_R}(I_n/I_{n+1}) \leq |V/\mathfrak{m}_{\nu}:R/\mathfrak{m}_R|$.\\
\end{Lemma}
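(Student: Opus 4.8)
The plan is to show that any set of elements of $I_n \setminus I_{n+1}$ whose images in $I_n/I_{n+1}$ are linearly independent over $R/\mathfrak{m}_R$ gives rise, after dividing through by a single element of $R$ of value $n$, to a set of elements of $V_\nu$ whose images in $V_\nu/\mathfrak{m}_\nu$ are linearly independent over $R/\mathfrak{m}_R$. Since $\dim_{R/\mathfrak{m}_R}(V_\nu/\mathfrak{m}_\nu) = |V_\nu/\mathfrak{m}_\nu : R/\mathfrak{m}_R|$ is finite by hypothesis, the bound on $\dim_{R/\mathfrak{m}_R}(I_n/I_{n+1})$ follows immediately.

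The first step is to produce an element $h \in R$ with $\nu(h) = n$. This is not quite automatic: $n$ need not lie in $S^R(\nu)$ for small $n$. However, the statement is only nontrivial when $I_n/I_{n+1} \neq 0$, i.e. when there exists $h \in I_n \setminus I_{n+1}$, and such an $h$ has $\nu(h) = n$; so we may assume such an $h$ exists (if $I_n = I_{n+1}$ the dimension is $0$ and there is nothing to prove). Fix such an $h$. Now suppose $e_1, \dots, e_r \in I_n \setminus I_{n+1}$ have images $[e_1], \dots, [e_r]$ forming a basis of $I_n/I_{n+1}$ over $R/\mathfrak{m}_R$. Consider the elements $e_i/h \in K$: each has $\nu(e_i/h) = \nu(e_i) - n \geq 0$, so $e_i/h \in V_\nu$, and we look at their residues $\overline{e_i/h} \in V_\nu/\mathfrak{m}_\nu$.

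The key step is to check linear independence of $\overline{e_1/h}, \dots, \overline{e_r/h}$ over $R/\mathfrak{m}_R$. Suppose $\sum_{i=1}^r c_i \cdot \overline{e_i/h} = 0$ in $V_\nu/\mathfrak{m}_\nu$ with $c_i \in R/\mathfrak{m}_R$, not all zero; lift the $c_i$ to representatives in $R$ (still called $c_i$). Then $\nu\left(\sum_{i=1}^r c_i e_i/h\right) > 0$, hence $\nu\left(\sum_{i=1}^r c_i e_i\right) > \nu(h) = n$, which forces $\sum_{i=1}^r c_i [e_i] = [0]$ in $I_n/I_{n+1}$. By the basis assumption, every $c_i \in \mathfrak{m}_R$, i.e. the residue classes $c_i$ are all zero — contradiction. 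Therefore $r \leq \dim_{R/\mathfrak{m}_R}(V_\nu/\mathfrak{m}_\nu) = |V_\nu/\mathfrak{m}_\nu : R/\mathfrak{m}_R|$, and since this holds for the dimension $r = \dim_{R/\mathfrak{m}_R}(I_n/I_{n+1})$ itself, we are done.

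I expect no serious obstacle here; the argument is a mild variant of the proof of Lemma \ref{persist} (multiply/divide by an element of the right value and transport linear independence). The only point requiring a moment's care is the reduction to the case $I_n \supsetneq I_{n+1}$ so that an $h \in R$ of value exactly $n$ is available without invoking Lemma \ref{semigroup}; alternatively, one could note that if $I_n = I_{n+1}$ then by the valuation property $I_n = I_m$ for all $m \geq n$ would fail in general, but in any case the dimension is zero and the inequality is trivial.
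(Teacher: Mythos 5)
Your proposal is correct and is essentially the same argument as the paper's. The paper takes $r+1$ elements of $I_n\setminus I_{n+1}$, divides by one of them (playing the role of your $h$) to land in $V_\nu$, finds a linear dependence among the residues in $V_\nu/\mathfrak{m}_\nu$ since $\dim_{R/\mathfrak{m}_R}(V_\nu/\mathfrak{m}_\nu)=r$, and transports it back to a dependence in $I_n/I_{n+1}$; you run the same computation in the contrapositive direction (independence in $I_n/I_{n+1}$ transfers forward to independence in $V_\nu/\mathfrak{m}_\nu$), which is equivalent. Your explicit treatment of the degenerate case $I_n=I_{n+1}$, to guarantee an $h$ of value exactly $n$ without appealing to Lemma \ref{semigroup}, is a harmless addition the paper leaves implicit.
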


\begin{proof}
Let $r=|V/\mathfrak{m}_{\nu}:R/\mathfrak{m}_R|$, and let $f_1, \dots f_{r+1}$ be elements in $I_n \setminus I_{n+1}$ such that $[f_1], \dots, [f_{r+1}]$ are distinct elements of $I_n/I_{n+1}$. Then $[1], \dots, \left[\frac{f_{r+1}}{f_1}\right]$ form distinct nonzero residues in $V/\mathfrak{m}_{\nu}$. Hence there exist $c_i \in R/\mathfrak{m}_R$ for $1 \leq i \leq r+1$, not all zero, such that $\sum_{i=1}^{r+1}{c_i\left[\frac{f_i}{f_1}\right]}=[0]$. Thus $\nu(\sum_{i=1}^{r+1}{c_i \frac{f_i}{f_1}})>0$, and so $\nu(\sum_{i=1}^{r+1}{c_i f_i})>n$. Hence $\sum_{i=1}^{r+1}{c_i \left[f_i\right]}=\left[0\right]$, and $[f_1], \dots, [f_{r+1}]$ are linearly dependent over $R/\mathfrak{m}_R$. Thus $dim_{R/\mathfrak{m}_R}(I_n/I_{n+1}) \leq r$.\\ 
\end{proof}

\begin{Proposition} \label{startingnow}
Suppose that $\nu$ is an algebraic discrete valuation dominating a local domain $(R,\mathfrak{m}_R)$, and that $V_{\nu}/\mathfrak{m}_{\nu}$ is finite over $R/\mathfrak{m}_R$. There exists $n_1 \in \NN$ such that for all $n \geq n_1$, $dim_{R/\mathfrak{m}_R}(I_n/I_{n+1})=|V/\mathfrak{m}_{\nu}:R/\mathfrak{m}_R|$.
\end{Proposition}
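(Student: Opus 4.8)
The plan is to combine the three preceding lemmas. Let $r = |V_\nu/\mathfrak{m}_\nu : R/\mathfrak{m}_R|$. By Lemma \ref{max}, we already have $\dim_{R/\mathfrak{m}_R}(I_n/I_{n+1}) \le r$ for all $n \in \NN$, so it suffices to produce an $n_1$ past which the dimension is $\ge r$ as well. To do this I would first exhibit one value $m$ of $\nu$ at which the full dimension $r$ is attained, and then use the semigroup-translation argument of Lemma \ref{persist} to propagate that maximal dimension forward along $S^R(\nu)$; finally, Lemma \ref{semigroup} guarantees that $S^R(\nu)$ eventually contains every integer, so the propagation covers all sufficiently large $n$.

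The main step is the existence of some $m$ with $\dim_{R/\mathfrak{m}_R}(I_m/I_{m+1}) = r$. Here is how I would argue it. Since $V_\nu/\mathfrak{m}_\nu$ is finite of degree $r$ over $R/\mathfrak{m}_R$, choose elements $d_1, \dots, d_r \in V_\nu$ whose residues $[d_1], \dots, [d_r]$ form a basis of $V_\nu/\mathfrak{m}_\nu$ over $R/\mathfrak{m}_R$. Each $d_j$ can be written as $d_j = f_j / g$ with $f_j, g \in R$ (clearing a common denominator), and we may assume $\nu(g) = m_0$ and $\nu(f_j) \ge m_0$ for all $j$, with $\nu(f_j) = m_0$ for those $j$ with $d_j$ a unit in $V_\nu$ — in fact, since each $[d_j] \ne 0$ we have $\nu(d_j) = 0$, hence $\nu(f_j) = \nu(g) = m_0$. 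Thus $f_1, \dots, f_r \in I_{m_0} \setminus I_{m_0+1}$. I claim $[f_1], \dots, [f_r]$ are linearly independent in $I_{m_0}/I_{m_0+1}$: if $\sum c_i [f_i] = [0]$ with $c_i \in R/\mathfrak{m}_R$, then $\nu(\sum c_i f_i) > m_0$, so $\nu(\sum c_i f_i / g) > 0$, i.e. $\sum c_i [d_i] = [0]$ in $V_\nu/\mathfrak{m}_\nu$, forcing all $c_i = 0$. Hence $\dim_{R/\mathfrak{m}_R}(I_{m_0}/I_{m_0+1}) \ge r$, and combined with Lemma \ref{max} this gives equality at $n = m_0$.

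With $m_0$ in hand, apply Lemma \ref{semigroup} to get $n_0$ such that $n \in S^R(\nu)$ for all $n \ge n_0$. Set $n_1 = m_0 + n_0$. For any $n \ge n_1$, write $n = m_0 + k$ with $k = n - m_0 \ge n_0$, so $k \in S^R(\nu)$; then Lemma \ref{persist} (applied with the maximal dimension $r$ at level $m_0$) gives $\dim_{R/\mathfrak{m}_R}(I_{m_0+k}/I_{m_0+k+1}) \ge r$. Together with the upper bound of Lemma \ref{max}, we conclude $\dim_{R/\mathfrak{m}_R}(I_n/I_{n+1}) = r$ for all $n \ge n_1$, as desired.

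I do not anticipate a serious obstacle; the only point requiring care is the choice of a common denominator $g$ with $\nu(g)$ matching $\nu(f_j)$ for all $j$, which is handled by the observation that $\nu(d_j) = 0$ whenever $[d_j] \ne 0$. The rest is bookkeeping with the three lemmas.
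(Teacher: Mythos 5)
Your proof is correct and follows essentially the same strategy as the paper: exhibit a single degree $N$ (your $m_0$) at which the full dimension $r$ is attained by lifting a residue-field basis through a common denominator, then propagate via Lemma \ref{persist} and Lemma \ref{semigroup}, with Lemma \ref{max} providing the matching upper bound. The only cosmetic difference is that the paper includes $[1]$ in its basis and explicitly takes the common denominator to be the product $g = \prod g_i$, whereas you phrase it as ``clearing a common denominator'' — these amount to the same thing.
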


\begin{proof}
Let $|V/\mathfrak{m}_{\nu}:R/\mathfrak{m}_R|=r$. For $1 \leq i \leq r-1$, there exist elements $\alpha_i \in V \setminus m_{\nu}$, such that the elements $\left[1\right],\left[\alpha_1 \right],\dots,\left[\alpha_{r-1}\right]$ form a basis for $V/\mathfrak{m}_{\nu}$ over $R/\mathfrak{m}_R$. For $1 \leq i \leq r-1$, we may write $\alpha_i=\frac{f_i}{g_i}$, where $f_i, g_i \in R$ and $\nu(f_i)=\nu(g_i)=n_i$. Next, define $g=\prod_{i=1}^{r-1}g_i$. Then $g \in R$, and $\nu(g)=\sum_{i=1}^{r-1}{n_i} =: N$. Further, for all $1 \leq i \leq r-1$, $g  \alpha_i = \frac{gf_i}{g_i} = f_i \cdot \prod_{j \neq i}{g_j}$, so that $g \alpha_i \in R$, and in particular, $\nu(g \alpha_i)=\nu(f_i)+\sum_{j \neq i}{\nu(g_j)}=n_i+\sum_{j \neq i}{n_j}=N$. Thus $g \in I_N \setminus I_{N+1}$ and $g \alpha_i \in I_N \setminus I_{N+1}$ for $1 \leq i \leq r-1$. In particular, $[g]$ and the elements  $[g \alpha_i]$ for $1 \leq i \leq r-1$ are  nonzero elements of $I_N/I_{N+1}$.\\

Suppose that $c_0[g]+\sum_{i=1}^{r-1}{c_i[g\alpha_i]}=[0]$ in $I_N/I_{N+1}$, where $c_i \in R/\mathfrak{m}_R$ for $0 \leq i \leq r-1$. Then $\nu(c_0g+\sum_{i=1}^{r-1}c_ig\alpha_i)>N$, whence $\nu(c_0+\sum_{i=1}^{r-1}c_i\alpha_i)=\nu(c_0g+\sum_{i=1}^{r-1}{c_ig\alpha_i})-\nu(g)>N-N=0$, so that $c_0[1]+\sum_{i=1}^{r-1}c_i[\alpha_i]=[0]$ in $V/\mathfrak{m}_{\nu}$. Hence $c_j=0$ for $0 \leq j \leq r-1$, and so $[g]$ and the elements $[g \alpha_i]$, $1 \leq i \leq r-1$ are linearly independent over $R/\mathfrak{m}_R$ in $I_N/I_{N+1}$. Thus $dim_{R/\mathfrak{m}_R}(I_N/I_{N+1}) \geq r$. By Lemma \ref{max}, $dim_{R/\mathfrak{m}_R}(I_N/I_{N+1})=r$, and so $dim_{R/\mathfrak{m}_R}(I_N/I_{N+1})=|V/\mathfrak{m}_{\nu}:R/\mathfrak{m}_R|$.\\

Finally, suppose that $n_0$ is as in Lemma \ref{semigroup}. Then for all $n \geq n_0$, we have that $n \in S^R(\nu)$. Hence $dim_{R/\mathfrak{m}_R}(I_{N+n}/I_{N+n+1})=|V/\mathfrak{m}_{\nu}:R/\mathfrak{m}_R|$ for all $n \geq n_0$, by Lemma \ref{persist} and Lemma \ref{max}. Thus the lemma follows by taking $n_1=N+n_0$.\\

\end{proof}

Now we prove Theorem \ref{linear} and Corollary \ref{zerolimit} from the introduction. We remind the reader of their statements: \\

 \begin{Theorem}
Suppose that $\nu$ is an algebraic discrete valuation dominating a local domain $(R,\mathfrak{m}_R)$, and that $V_{\nu}/\mathfrak{m}_{\nu}$ is finite over $R/\mathfrak{m}_R$. Then there exist $c,b \in \mathbb{Z}$ such that $$\ell_R(R/I_n)=cn+b$$ for $n\gg 0$, where $c=|V/\mathfrak{m}_{\nu}:R/\mathfrak{m}_R|$.\\
\end{Theorem}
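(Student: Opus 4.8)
The plan is to leverage Proposition \ref{startingnow}, which tells us that $\dim_{R/\mathfrak{m}_R}(I_n/I_{n+1}) = c$ for all $n \geq n_1$, where $c = |V/\mathfrak{m}_\nu : R/\mathfrak{m}_R|$. Since $\ell_R(R/I_n)$ is finite (as $\nu$ is a discrete valuation dominating $R$, the ideals $I_n$ are $\mathfrak{m}_R$-primary for $n \geq 1$, or at least cofinite—one should note $\ell_R(R/I_n) < \infty$ for all $n$, which follows because $R/I_n$ injects into a finite direct sum of the quotients $I_j/I_{j+1}$ each of which is finite-dimensional by Lemma \ref{max}), we can write the telescoping identity
$$\ell_R(R/I_n) = \sum_{j=0}^{n-1} \ell_R(I_j/I_{j+1}) = \sum_{j=0}^{n-1} \dim_{R/\mathfrak{m}_R}(I_j/I_{j+1}).$$

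First I would establish that this telescoping sum is valid, i.e. that $\ell_R(I_j/I_{j+1}) = \dim_{R/\mathfrak{m}_R}(I_j/I_{j+1})$ is finite for every $j$ and that $R/I_n$ has a finite filtration $R/I_n \supset I_1/I_n \supset \cdots \supset I_{n-1}/I_n \supset 0$ with the stated successive quotients; this is immediate from the definitions and Lemma \ref{max}. Then, for $n \geq n_1$, I would split the sum as
$$\ell_R(R/I_n) = \sum_{j=0}^{n_1 - 1} \dim_{R/\mathfrak{m}_R}(I_j/I_{j+1}) + \sum_{j=n_1}^{n-1} \dim_{R/\mathfrak{m}_R}(I_j/I_{j+1}) = b' + (n - n_1)c,$$
where $b' := \sum_{j=0}^{n_1-1}\dim_{R/\mathfrak{m}_R}(I_j/I_{j+1})$ is a fixed nonnegative integer independent of $n$. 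Setting $b := b' - n_1 c \in \mathbb{Z}$ gives $\ell_R(R/I_n) = cn + b$ for all $n \geq n_1$, which is the claim.

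The argument is essentially a bookkeeping exercise once Proposition \ref{startingnow} is in hand, so there is no serious obstacle; the only point requiring a moment's care is the finiteness of $\ell_R(R/I_n)$ and the legitimacy of the telescoping sum. I would handle this by observing that the filtration $\{I_j/I_n\}_{j=0}^{n}$ of $R/I_n$ is finite with each factor $I_j/I_{j+1}$ of finite length (bounded by $c$ via Lemma \ref{max}), so additivity of length over short exact sequences yields both the finiteness of $\ell_R(R/I_n)$ and the displayed identity simultaneously. No hypothesis beyond those already assumed is needed, and the integer $c$ in the leading term is exactly $|V/\mathfrak{m}_\nu : R/\mathfrak{m}_R|$ by construction.
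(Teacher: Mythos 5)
Your proposal is correct and follows the same route as the paper: both invoke Proposition \ref{startingnow} to get $\ell_R(I_j/I_{j+1})=c$ for $j\geq n_1$, then telescope $\ell_R(R/I_n)=\sum_{j=0}^{n-1}\ell_R(I_j/I_{j+1})$ and split the sum at $j=n_1$. The only addition you make is the explicit justification (via Lemma \ref{max} and additivity of length) that the sum is finite and legitimate, which the paper leaves implicit.
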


 \begin{Corollary}
Suppose that $\nu$ is an algebraic discrete valuation dominating a local domain $(R,\mathfrak{m}_R)$, and that $V_{\nu}/\mathfrak{m}_{\nu}$ is finite over $R/\mathfrak{m}_R$. Then $$\lim_{n \to \infty} \frac{\ell_R(R/I_n)}{n^2}=0.$$ \\
\end{Corollary}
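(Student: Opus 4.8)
The plan is to derive both the Theorem and the Corollary as straightforward consequences of Proposition \ref{startingnow}, which pins down the dimension of the graded pieces $I_n/I_{n+1}$ for large $n$. First I would invoke Proposition \ref{startingnow} to obtain an integer $n_1$ such that $\dim_{R/\mathfrak{m}_R}(I_n/I_{n+1}) = r := |V_\nu/\mathfrak{m}_\nu : R/\mathfrak{m}_R|$ for all $n \geq n_1$. The key identity is the telescoping sum
\[
\ell_R(R/I_n) = \ell_R(R/I_{n_1}) + \sum_{j=n_1}^{n-1} \ell_R(I_j/I_{j+1}),
\]
valid because each $I_j$ is $\mathfrak{m}_R$-primary (being a valuation ideal of a discrete valuation dominating $R$, it contains some power of $\mathfrak{m}_R$), so all the lengths involved are finite. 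Since $\ell_R(I_j/I_{j+1}) = r$ for every $j \geq n_1$, the sum evaluates to $r(n - n_1)$ for $n \geq n_1$, giving
\[
\ell_R(R/I_n) = r n + \bigl(\ell_R(R/I_{n_1}) - r n_1\bigr)
\]
for all $n \geq n_1$. Setting $c = r$ and $b = \ell_R(R/I_{n_1}) - r n_1$ — both integers — yields the statement of the Theorem.

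One technical point I would want to address cleanly is why $\ell_R(R/I_n)$ is finite, i.e.\ why $I_n$ is $\mathfrak{m}_R$-primary. This follows since $\nu$ dominates $R$ and is a rank one discrete valuation: for any nonzero $x \in \mathfrak{m}_R$ we have $\nu(x) = m > 0$, and then $\mathfrak{m}_R^{mn} \subseteq I_{n}$ because any product of $mn$ elements of $\mathfrak{m}_R$ has $\nu$-value at least $mn \cdot 1 \geq mn$ — more carefully, each generator of $\mathfrak{m}_R$ has positive value so a high enough power of $\mathfrak{m}_R$ lands in $I_n$; alternatively one notes $R/I_n$ is a quotient of $R/\mathfrak{m}_R^N$ for suitable $N$. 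Either way, $R/I_n$ has finite length, and the telescoping argument is justified.

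For the Corollary, I would simply divide the formula $\ell_R(R/I_n) = cn + b$ (valid for $n \gg 0$) by $n^2$ and let $n \to \infty$: the expression $(cn+b)/n^2 = c/n + b/n^2 \to 0$. The main obstacle here is essentially nonexistent — the real content was already established in Proposition \ref{startingnow}, which is where Lemmas \ref{persist}, \ref{semigroup}, and \ref{max} do the work of showing the graded pieces eventually stabilize at the maximal possible dimension $r$. The only mild care needed is the finiteness/primary-ideal bookkeeping noted above and ensuring the constant $b$ is genuinely an integer, which is immediate since $\ell_R(R/I_{n_1})$ and $r n_1$ are integers.
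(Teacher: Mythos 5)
Your proof is correct and follows essentially the same route as the paper: invoke Proposition \ref{startingnow} to stabilize $\ell_R(I_n/I_{n+1})$, telescope to get $\ell_R(R/I_n)=cn+b$ for $n\gg 0$, and divide by $n^2$. The added remark on why $I_n$ is $\mathfrak{m}_R$-primary (hence lengths are finite) is a sensible bit of bookkeeping that the paper leaves implicit.
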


\begin{proof}
In Proposition \ref{startingnow} we have shown that there exists $n_1 \in \mathbb{N}$ such that $\ell_R(I_n/I_{n+1})=c$ for $n \geq n_1$, where $c=|V/\mathfrak{m}_{\nu}:R/\mathfrak{m}_R|$. Observe that $$\ell_R(R/I_n) = \sum_{k=0}^{k=n-1}{\ell_R(I_k/I_{k+1})}.$$ Hence for $n \geq n_1$, we have $$\ell_R(R/I_n) = \sum_{k=0}^{k=n_1-1}{\ell_R(I_k/I_{k+1})}+\sum_{k=n_1}^{k=n-1}{\ell_R(I_k/I_{k+1})}$$ and hence $$\ell_R(R/I_n) =\sum_{k=0}^{k=n_1-1}{\ell_R(I_k/I_{k+1})}+[n - n_1]c .$$ Thus $$\ell_R(R/I_n) =cn +\left[\sum_{k=0}^{k=n_1-1}{\ell_R(I_k/I_{k+1})}-n_1c \right].$$\\ This proves Theorem \ref{linear}. Dividing both sides by $n^2$ and taking the limit as $n \to \infty$ gives Corollary \ref{zerolimit}.\\ %gives:\\ $$\frac{\ell_R(R/I_n)}{n^2} =\frac{c}{n} +\frac{\left[\sum_{k=0}^{k=n_1-1}{\ell_R(I_k/I_{k+1})}-n_1c \right]}{n^2}.$$ 
\end{proof}

\section{Main Results} \label{infinitesection}

In this section we prove results for algebraic residue field extensions of infinite degree.\\

\begin{Lemma} \label{propassume}
Let $k=L\left( \left\{ \sigma_i \right\} \right)$, where $L$ is a field and $\left\{\sigma_i\right\}_{i \in \mathbb{Z}_+}$ is a set of algebraically independent elements over $L$. Let $\alpha_i = \sqrt{\sigma_i}$ for $i \in \mathbb{Z}_+$. Let $R$ be the power series ring $R:=k[[x,y]]$. Define elements $P_i \in R$ by $$P_0=x, P_1=y, \text{ and } P_{i+1}=P_i^2-\sigma_ix^{2r_i} \text{ for } i \geq 1.$$ Suppose that $r_i \in \mathbb{N}$ are such that $$r_0=1 \text{ and } r_{i+1} > 2r_i \text{ for } i \geq 1.$$ Then there exists a unique valuation $\nu$ of the quotient field of $R$ which dominates $R$, such that $$\nu(P_i)=r_i \text{ for } i \in \mathbb{N}.$$ $\nu$ has the property that $\{P_i\}$ satisfies the conclusions of Theorem 4.2. of \cite{Dale}. The residue field of $\nu$ is naturally isomorphic to $k(\{\alpha_i\})$. We can take $U_i=x^{r_i}$ for $i \geq 1$ in 4) of Theorem 4.2. of \cite{Dale}, and then we obtain that $$\alpha_i=\left[\frac{P_i}{x^{r_i}}\right] $$ for $i \geq 1$.\\ 
\end{Lemma}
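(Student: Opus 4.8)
The plan is to \textbf{deduce the statement from the generating-sequence algorithm of \cite{Dale}}, i.e.\ from Theorem 4.2 there, by checking that the sequence $\{P_i\}$ together with the prescribed values $\{r_i\}$ satisfies the hypotheses of that algorithm, and then reading off the residue-field information from the defining recursion. Thus the work splits into three pieces: verifying the hypotheses of Theorem 4.2 of \cite{Dale}, quoting its conclusions, and extracting the formula $\alpha_i=[P_i/x^{r_i}]$.

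First I would verify the structural and numerical hypotheses of the algorithm stage by stage. At stage $i$ one has the partial list $P_0,\dots,P_i$ with assigned values $r_0,\dots,r_i$, and the algorithm prescribes that the next element be built by killing a leading form: $P_{i+1}$ should be $P_i^{n_i}$ minus a unit times a monomial in $x$ (and the earlier $P_j$), with the two terms having equal $\nu$-value. Our recursion $P_{i+1}=P_i^{2}-\sigma_i x^{2r_i}$ is exactly of this shape with $n_i=2$: since $\sigma_i\in k=R/\mathfrak{m}_R$ is a unit of $R$ we have $\nu(\sigma_i)=0$, and since $\nu(x)=r_0=1$ we get $\nu(\sigma_i x^{2r_i})=2r_i=\nu(P_i^{2})$; moreover the leading form of $P_i^{2}$ is $\bigl[P_i/x^{r_i}\bigr]^{2}\cdot[x^{2r_i}]=\sigma_i\cdot[x^{2r_i}]$, which is the leading form of $\sigma_i x^{2r_i}$, so the two leading forms cancel and $\nu(P_{i+1})>2r_i$. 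Hence assigning $\nu(P_{i+1})=r_{i+1}$ with $r_{i+1}>2r_i$ is admissible, and the hypotheses $r_0=1$ and $r_{i+1}>2r_i$ for $i\ge 1$ are precisely the numerical conditions demanded by the algorithm. With these in hand I would invoke Theorem 4.2 of \cite{Dale}: it produces a valuation $\nu$ of $QF(R)$ dominating $R$ with $\nu(P_i)=r_i$ for all $i$, shows $\{P_i\}$ is a generating sequence (whence $\nu$ is the unique valuation dominating $R$ with those values, since its value on any $f\in R$ is then forced by the initial forms of the $P_i$), and gives the remaining items of that theorem. If Theorem 4.2 is stated for finite generating sequences, one passes to the union/direct limit over the finite stages, which is harmless because the approximating value semigroups and residues are nested in the way the algorithm guarantees.

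For the residue field I would take $U_i=x^{r_i}$, which is legitimate in item 4) of Theorem 4.2 because $\nu(x^{r_i})=r_i\in S^R(\nu)$. Dividing the recursion by $x^{2r_i}$ gives
\[
\frac{P_{i+1}}{x^{2r_i}}=\left(\frac{P_i}{x^{r_i}}\right)^{2}-\sigma_i ,
\]
and the left-hand side lies in $\mathfrak{m}_\nu$ since its value is $r_{i+1}-2r_i>0$; passing to $V_\nu/\mathfrak{m}_\nu$ yields $\bigl[P_i/x^{r_i}\bigr]^{2}=\sigma_i$, so, taking $\alpha_i$ to be this square root, $\alpha_i=\bigl[P_i/x^{r_i}\bigr]$. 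Since Theorem 4.2 shows that $V_\nu/\mathfrak{m}_\nu$ is generated over $R/\mathfrak{m}_R=k$ by exactly the new residues $\bigl[P_i/U_i\bigr]$, this identifies the residue field with $k(\{\alpha_i\})$ via the map induced by $R\hookrightarrow V_\nu$ on $k$ sending $\alpha_i\mapsto[P_i/x^{r_i}]$; the algebraic independence of the $\sigma_i$ over $L$ guarantees that each step is a nontrivial (degree-two) extension, so the generating sequence is genuinely infinite and the isomorphism $V_\nu/\mathfrak{m}_\nu\cong k(\{\alpha_i\})$ is the natural one.

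The main obstacle is not a hard computation but the bookkeeping of \textbf{matching our explicit recursion to the precise inductive framework of Theorem 4.2 of \cite{Dale}}: checking each of its hypotheses (the admissible form of $P_{i+1}$, the conditions on the exponents, the value-semigroup conditions, and the correct identification of the ``new'' generator and its normalized residue at each stage). Once that dictionary is set up, existence, uniqueness, the generating-sequence property, and the residue-field description all follow from the cited theorem, and only the one-line computation above is needed for the explicit formula $\alpha_i=[P_i/x^{r_i}]$.
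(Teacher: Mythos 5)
Your approach is close in spirit to the paper's but takes a genuinely different route and contains a citation slip worth flagging. The paper does not attempt to derive existence of $\nu$ from Theorem 4.2 of \cite{Dale}; as stated there, Theorem 4.2 is a generating-sequence \emph{algorithm} applied to a valuation that is already given. Instead, the paper invokes Theorem 1.1 of \cite{Dale}, which is a construction result: given prescribed value data $\{\beta_i\}$ and residue data $\{\alpha_i\}$ subject to compatibility conditions, it produces a valuation dominating $R$. The paper then observes that the generating sequence built in the \emph{proof} of Theorem 1.1, once one fixes $U_i = x^{r_i}$, is precisely the sequence $\{P_i\}$, and that this agrees with the output of the Theorem 4.2 algorithm applied to the resulting $\nu$. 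Your proposal inverts this logic: you verify stage-by-stage that $P_{i+1} = P_i^2 - \sigma_i x^{2r_i}$ has the admissible key-polynomial shape (with $\nu(P_i^2) = \nu(\sigma_i x^{2r_i}) = 2r_i$ and cancelling leading forms) and then try to extract existence from Theorem 4.2 itself. That computation is correct and is exactly why the matching in the paper works, but deriving existence directly from Theorem 4.2 is not justified by the theorem's statement; you would need to route through Theorem 1.1 (or prove an analogous construction statement). Your residue-field paragraph, dividing the recursion by $x^{2r_i}$ to get $[P_i/x^{r_i}]^2 = \sigma_i$ in $V_\nu/\mathfrak m_\nu$ and using algebraic independence of the $\sigma_i$ to see each extension step is genuinely degree two, matches the paper and is fine. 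So: replace the existence citation with Theorem 1.1 of \cite{Dale}, keep your leading-form verification as the justification that the $\{P_i\}$ coincide with the constructed generating sequence, and the argument agrees with the paper's.
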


\begin{proof}

Let $\beta_i= r_i$ for $i \in \mathbb{N}$. Theorem 1.1 \cite{Dale} and its proof give the existence of a
valuation $\nu$ associated to the $\{\beta_i\}_{i \in \mathbb{N}}$ and $\{\alpha_i\}_{i \in \mathbb{Z}_+}$, which dominates $R$. The proof (from the middle of page 21 of \cite{Dale}) determines $\nu$ by constructing a sequence of polynomials which
are a generating sequence determining the valuation $\nu$. Now the sequence of polynomials
are in fact the generating sequence associated to $\nu$ by the algorithm of Theorem 4.2 \cite{Dale}. If
we start the generating sequence with $x$ and $y$, and we take the $U_i$ in the construction to
be $x^{r_i}$ (which is consistent with the construction), we have that the generating sequence
constructed in the proof of Theorem 1.1 is in fact the $\{P_i\}$, and the algorithm of Theorem
4.2 produces the generating sequence $\{P_i\}$.

\end{proof}

\begin{Remark} \label{necessary}
 In the choice of the sequence $\{r_i\}_{i \in \mathbb{N}}$ determining the valuation $\nu$ in Lemma \ref{propassume}, the condition that $r_{i+1}>2r_i$ for all $i \geq 1$ is necessary for the conclusions of Lemma \ref{propassume}, by the use of the algorithm of \cite{Dale}.  
\end{Remark}

\begin{Lemma} \label{basis}
A $k$-basis of $I_s/I_{s+1}$ is \begin{multline} $$\mathcal{B}_s=\{[P_0^{n_0}P_1^{n_1}\cdots P_i^{n_i}] \mid i \in \mathbb{N}, n_0,\ldots, n_i\in \mathbb{N}, \\
n_j \in \{0,1\} \text{ for } 1 \leq j \leq i, \text{ and } n_0+n_1r_1+\cdots + n_ir_i = s\}.$$ \end{multline} \\
\end{Lemma}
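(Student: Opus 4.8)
The plan is to exhibit $\mathcal{B}_s$ as a spanning set and then establish linear independence, both leveraging the generating sequence structure from Theorem 4.2 of \cite{Dale} recorded in Lemma \ref{propassume}. Since $\{P_i\}$ is a generating sequence for $\nu$, every element of $I_s$ is, modulo $I_{s+1}$, a $k$-linear combination of monomials $P_0^{n_0}P_1^{n_1}\cdots P_i^{n_i}$ with $\nu$-value exactly $s$; the key reduction step is that whenever some exponent $n_j$ with $j \geq 1$ is at least $2$, we may use the defining relation $P_{j+1}=P_j^2-\sigma_j x^{2r_j}$ to rewrite $P_j^2 = P_{j+1}+\sigma_j P_0^{2r_j}$, trading two factors of $P_j$ for one factor of $P_{j+1}$ plus a monomial in $P_0$ alone. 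First I would check that this substitution preserves $\nu$-value: $\nu(P_j^2)=2r_j$, $\nu(P_{j+1})=r_{j+1}>2r_j$, and $\nu(\sigma_j P_0^{2r_j})=2r_j$, so the replacement is legitimate modulo terms of strictly higher value (which land in $I_{s+1}$). Iterating this rewriting — which terminates because it strictly decreases the exponent of the highest-index $P_j$ appearing with exponent $\geq 2$, in an appropriate well-ordering — shows that the classes in $\mathcal{B}_s$ span $I_s/I_{s+1}$ over $k$.

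For linear independence, I would pass to the residue field description. Suppose a nontrivial $k$-linear combination $\sum c_{\underline{n}} P_0^{n_0}\cdots P_i^{n_i}$ lies in $I_{s+1}$, where each monomial has $\nu$-value exactly $s$. Group the monomials by their exponent vector $(n_1,\dots,n_i) \in \{0,1\}^i$ restricted to indices $\geq 1$; within a fixed such pattern, the monomials differ only in the exponent $n_0$ of $x=P_0$, and since $\nu(x)=1$ forces $n_0$ to be determined by $s$ and the pattern, each pattern contributes \emph{at most one} monomial. Thus the combination reads $\sum_{\text{patterns } J} c_J\, x^{n_0(J)} \prod_{j \in J} P_j$, and dividing through by $x^s$ (a unit times element of value $s$, legitimate in the graded ring after the standard localization argument as in Lemma \ref{persist}) gives, in $V_\nu/\mathfrak{m}_\nu \cong k(\{\alpha_i\})$, a relation $\sum_J c_J \prod_{j \in J} \alpha_j = 0$ using $\alpha_j = [P_j/x^{r_j}]$. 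Since the $\alpha_j = \sqrt{\sigma_j}$ are square roots of algebraically independent elements, the squarefree monomials $\prod_{j \in J}\alpha_j$ are $k$-linearly independent in $k(\{\alpha_i\})$ — this is a standard Kummer-theoretic fact about $\prod [\mathbb{Z}/2]$-extensions — forcing all $c_J = 0$.

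The main obstacle I anticipate is bookkeeping in the spanning argument: one must be careful that the rewriting procedure using $P_j^2 \mapsto P_{j+1} + \sigma_j P_0^{2r_j}$ genuinely terminates and never reintroduces high powers at lower indices, so the right move is to fix a well-founded order on exponent vectors (e.g. compare first the largest index $j$ with $n_j \geq 2$, then the value $n_j$ there) and verify each substitution strictly decreases this order while the value-$s$ constraint and the condition $r_{i+1}>2r_i$ keep all error terms in $I_{s+1}$. A secondary subtlety is justifying that division by $x^s$ is well-defined at the level of graded pieces and reflects linear independence faithfully; this is exactly the mechanism already used in Lemma \ref{persist} (multiplication by an element of prescribed value is injective on associated graded pieces), applied in reverse. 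Once these two points are nailed down, the residue-field computation identifying $\ell_R(I_s/I_{s+1})$ with the number of squarefree patterns $J$ realizing value $s$ is immediate, and $\mathcal{B}_s$ is the asserted basis.
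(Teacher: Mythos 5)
Your argument is correct but takes a more hands-on route than the paper, which for both directions simply leans on \cite{Dale}: linear independence there is a direct citation of Theorem~4.2(2) (after dividing the relation through by one of the monomials $\gamma_1$ of value $s$ rather than by $x^s$, though that choice is immaterial), and spanning is obtained by reducing equation~(36) of the proof of Theorem~1.1 of \cite{Dale} modulo $I_{s+1}$. You instead unpack both citations: you carry out the rewriting $P_j^2 = P_{j+1} + \sigma_j x^{2r_j}$ explicitly with a termination order to get spanning, and you identify the residues $[\gamma/x^s]$ with squarefree monomials $\prod_{j\in J}\alpha_j$ and invoke their $k$-linear independence to get injectivity. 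This is a legitimate alternative that makes the proof self-contained modulo only the generating-sequence property itself; the paper's version is shorter but opaque without \cite{Dale} in hand. Two small cautions: (i) your spanning argument begins with ``every element of $I_s$ is, modulo $I_{s+1}$, a $k$-combination of monomials in the $P_i$ of value exactly $s$'' --- that is precisely the content being extracted from equation~(36) of \cite{Dale}, i.e.\ the substance of $\{P_i\}$ being a generating sequence, so it must be cited rather than asserted; and (ii) calling the independence of the $\prod_{j\in J}\sqrt{\sigma_j}$ ``Kummer-theoretic'' is only accurate when $\operatorname{char}L\neq 2$; in characteristic $2$ the extensions $k(\sqrt{\sigma_j})/k$ are purely inseparable rather than Galois, but the needed fact (each $\sigma_i$ fails to be a square in $k(\sqrt{\sigma_1},\dots,\sqrt{\sigma_{i-1}})$, so the degree is $2^n$ and the squarefree products form a basis) still holds by a direct computation, so this is a misattribution rather than a gap.
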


\begin{proof}
We first show that the distinct elements of $\mathcal{B}_s$ are linearly independent over $k$. Let us fix an enumeration $\{\gamma_h\}$ of the monomials \begin{multline} $$\{P_0^{n_0}P_1^{n_1}\cdots P_i^{n_i} \mid i \in \mathbb{N}, n_0,\ldots, n_i\in \mathbb{N}, \\
n_j \in \{0,1\} \text{ for } 1 \leq j \leq i, \text{ and } n_0+n_1r_1+\cdots + n_ir_i = s\}.$$\end{multline} Observe that $\{[\gamma_h]\}$ is an enumeration of the elements of $\mathcal{B}_s$. By construction, $\nu(\gamma_1)=\nu(\gamma_h)=s$ for all $h$. Furthermore, the exponent $i$-tuples $(n_0,  \dots, n_i)$ associated to the monomials $\gamma_h$ are distinct for distinct $h$.  \\

Suppose now that there is an equation $\sum c_h[\gamma_h]  =0$ in $I_s/I_{s+1}$, with $c_h \in k$. Then $\nu(\sum c_h\gamma_h)>s$, and so $\nu(\sum c_h\frac{\gamma_h}{\gamma_1})=\nu(\sum c_h\gamma_h)-\nu(\gamma_1)>s-s=0$. Thus we have an equation $\sum c_h[\frac{\gamma_h}{\gamma_1}]=0$ in $V_{\nu}/\mathfrak{m}_{\nu}$. Now Theorem 4.2 (2) of \cite{Dale} states that the $\{[\frac{\gamma_h}{\gamma_1}]\}$ are linearly independent over $k$. Thus we must have had $c_h=0$ for all $h$, and so the elements of $\mathcal{B}_s$ were linearly independent over $k$.\\

Next, by reducing equation (36) modulo $I_{s+1}$ in the proof of Theorem of 1.1 in \cite{Dale}, we see that the elements of $\mathcal{B}_s$ generate $I_s/I_{s+1}$ over $k$.\\ 

Whence $\mathcal{B}_s$ is a $k$-basis for $I_s/I_{s+1}$. \\
\end{proof}

%\begin{comment}
%\begin{Definition} \label{sequence}
% Let $\theta \in (0,1) \cap \mathbb{R}$. Define an associated sequence $\{e_j\}_{j \geq 1}$ by defining for every $j \geq 1$, $e_j := \max\{n \in \mathbb{Z}_{\geq 0} \mid n2^{-j} \leq \theta - \sum_{k=1}^{j-1}{e_k2^{-k}} < (n+1)2^{-j} \}.$\\
%\end{Definition}
%\end{comment}

\begin{Definition} \label{sequence}
 Let $\theta \in \mathbb{R}_{\geq 0} \cup \{\infty \}$. 
 
 \begin{enumerate}[1.]
  \item If $\theta < \infty$, then define an associated sequence $\{e_j\}_{j \geq 2}$ as follows: define $e_2 \in \mathbb{N}$ so that $\frac{e_2}{2} \leq \theta < \frac{e_2+1}{2}$, and define for every $j \geq 3$, $e_j \in \{0,1\}$ such that $0 \leq \theta-e_2\frac{1}{2} - \dots - e_j\frac{1}{2^{j-1}} < \frac{1}{2^{j-1}}.$ Further, for every $j \geq 2$, define $R_j:= \theta - \sum_{k=2}^{j}{e_k2^{1-k}}$.\\
  \item If $\theta = \infty$, then define an associated sequence $\{e_j\}_{j \geq 2}$ by $e_j = 2^{j-1}$ for every $j \geq 2$. Notice in this case that $\sum_{j=2}^{i}{e_j2^{1-j}} = i-1$.  \\
 \end{enumerate}
 
\end{Definition}

In the sequel, we will have the following assumptions. We assume $\theta \in \mathbb{R}_{\geq 0} \cup \{ \infty \}$ given, and we define the sequence $\{r_i\}_{i \geq 0}$ inductively by \\

\begin{equation} \label{inductive_defn}
r_0=1, r_1=1, \text{ and }  r_{i+1}=2r_i+1+e_{i+1} \text{ for all } i \geq 1.\\  
\end{equation}\\

Then the general term $r_i$ is given by the formula

\begin{equation} \label{generalformula} 
r_0 = 1 \text{ and } 
r_i=2^{i}-1+\sum_{k=2}^{i}{e_k2^{i-k}}, 
\end{equation}

for $i \geq 1$, where $\{e_j\}$ is as in Definition \ref{sequence}. 

\begin{Lemma} \label{easy}
%Suppose $\{q_i\}$ is a sequence of integers. If $q_0=1$ and $q_{i+1}>\max(2q_i,q_0+\dots +q_i)$, then $\max(2q_i,q_0+\dots +q_i)=2q_i>q_0+\dots q_i$ for all $i \in \mathbb{N}$. In particular, $r_{i+1}>\max(2r_i,r_0+\dots+ r_i)$ for all $i \in \mathbb{N}$.

With our assumption (\ref{inductive_defn}), we have that

\begin{enumerate}[1.]
 \item $r_{i+1} > 2r_i$ for all $i \geq 1$,  
 \item $2r_i > r_0 + \dots + r_i$ for all $i \geq 2$,  
\end{enumerate}

\end{Lemma}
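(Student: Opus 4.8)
The plan is to prove both inequalities directly from the closed formula \eqref{generalformula}, treating the cases $\theta < \infty$ and $\theta = \infty$ uniformly where possible and separately where the coefficients $e_k$ behave differently. For part 1, I would simply observe that $r_{i+1} = 2r_i + 1 + e_{i+1}$ by the inductive definition \eqref{inductive_defn}, and since $e_{i+1} \geq 0$ (it is either in $\{0,1\}$ or equals $2^i$, in both cases nonnegative), we get $r_{i+1} \geq 2r_i + 1 > 2r_i$ for all $i \geq 1$. This step is essentially immediate.

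For part 2, the claim $2r_i > r_0 + r_1 + \dots + r_i$ for $i \geq 2$ is equivalent to $r_i > r_0 + r_1 + \dots + r_{i-1}$. I would prove this by induction on $i$. For the base case $i = 2$: from \eqref{inductive_defn}, $r_0 = r_1 = 1$, and $r_2 = 2r_1 + 1 + e_2 = 3 + e_2$, while $r_0 + r_1 = 2$, so $r_2 = 3 + e_2 > 2$ holds since $e_2 \geq 0$. For the inductive step, suppose $r_i > r_0 + \dots + r_{i-1}$ for some $i \geq 2$; I want $r_{i+1} > r_0 + \dots + r_i$. Using $r_{i+1} = 2r_i + 1 + e_{i+1} = r_i + (r_i + 1 + e_{i+1})$ and the inductive hypothesis $r_i > r_0 + \dots + r_{i-1}$, it suffices to check $r_i + 1 + e_{i+1} \geq r_i$, which is clear, but actually I need the sum $r_0 + \dots + r_i = (r_0 + \dots + r_{i-1}) + r_i < r_i + r_i = 2r_i \leq 2r_i + 1 + e_{i+1} = r_{i+1}$, giving the strict inequality directly from the inductive hypothesis. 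So the induction closes cleanly.

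The main subtlety — and the only place one must be slightly careful — is just making sure the inductive hypothesis is invoked in the right form ($r_i > \sum_{k<i} r_k$ rather than the stated $2r_i > \sum_{k\le i} r_k$), and that the base case starts at $i=2$ rather than $i=1$ (indeed for $i=1$ one would need $2r_1 = 2 > r_0 + r_1 = 2$, which is false, so the restriction $i \geq 2$ in the statement is necessary and the proof must respect it). I expect no real obstacle here; both parts follow from the recursion \eqref{inductive_defn} together with $e_j \geq 0$, with part 2 being a one-line induction. An alternative for part 2 would be to substitute the explicit formula \eqref{generalformula} and bound the geometric-type sums, but the inductive argument via \eqref{inductive_defn} is shorter and avoids manipulating the $\sum_{k=2}^i e_k 2^{i-k}$ terms, so that is the route I would take.
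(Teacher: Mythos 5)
Your proof is correct and follows essentially the same route as the paper: part 1 is immediate from the recursion $r_{i+1}=2r_i+1+e_{i+1}$ with $e_{i+1}\ge 0$, and part 2 is proved by induction with base case $i=2$. One minor improvement you make is explicitly restating the inductive hypothesis in the equivalent form $r_i > r_0+\cdots+r_{i-1}$; the paper's version leaves implicit the step $r_i > 2r_{i-1}$ (from part 1) needed to pass from $2r_{i-1}>r_0+\cdots+r_{i-1}$ to $r_i>r_0+\cdots+r_{i-1}$, whereas in your formulation the chain $r_0+\cdots+r_i<2r_i\le r_{i+1}$ closes the induction in one line with no such appeal.
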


\begin{proof} 

Since $r_{i+1}=2r_i+1+e_{i+1} > 2r_i \text{ for all } i \geq 1$ by construction, the first claim follows.\\

We verify the second claim by induction. By (\ref{inductive_defn}), $r_2 = 3+e_2 > 2 = r_0 + r_1$, and the second claim holds for the case $i=2$. \\

Now suppose that $i>2$, and that the second claim holds for the case $k=i-1$. We have that $2r_i=r_i+r_i>r_i+(r_0 + \dots + r_{i-1})$, the last inequality by the inductive statement. Hence the second claim holds for all $i \geq 2$.\\

%Suppose that $i=0$. Since $q_0=1$, we have $\max(2q_0,q_0)=2q_0>q_0$, so the statement holds. Now assume the statement for $i=k>0$. Since $q_{k+1}>q_0+\dots + q_{k}$, we have that $q_{k+1}+q_{k+1}>q_{k+1}+q_k+ \dots q_0$. Hence $\max(2q_{k+1},q_0+\dots +q_k+q_{k+1})=2q_{k+1}>q_{k+1}+q_k+ \dots +q_0$. Thus the first claim follows by induction.\\

  %We next verify the second claim. By (\ref{inductive_defn}), $r_0=1$, so that $\max \{2r_0,r_0 \}=2r_0 > r_0$, and the claim holds for the case $i=0$. Suppose now that $i>0$. We have that $2r_i = r_i+r_i > r_i+(r_0+\dots +r_{i-1})$, the last inequality by the first claim applied to the case $k=i-1$. Hence $\max \{2r_i, r_0 + \dots + r_i \} = 2r_i > r_0 + \dots + r_i $, and the claim holds for all $i \geq 0$.\\

\end{proof}

By the first claim of Lemma \ref{easy}, the sequence $\{r_i \}$ defined by (\ref{inductive_defn}) satisfies the assumptions of Lemma \ref{propassume}. Let $\nu$ be the valuation defined by Lemma \ref{propassume} with the sequence $\{r_i \}$ defined by (\ref{inductive_defn}).

%\begin{comment}
\begin{Remark} \label{spaceremark}
In the sequel, we will have inductively chosen the sequence $\{r_i\}_{i \in \mathbb{Z}_+}$ such that $$r_{i+1} \geq \max(2r_{i},r_0+r_1+ \dots +r_{i})+1.$$ \\
\end{Remark}
%\end{comment}

%-------Edited till here 20140209-0029

\begin{Lemma} \label{slab}
For any $i \in \mathbb{Z}_{\geq 1}$, let $s \in (r_0+r_1+ \dots +r_i , r_{i+1})$. Then $dim_k(I_s/I_{s+1})=2^i.$\\
\end{Lemma}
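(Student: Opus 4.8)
The plan is to count the elements of the basis $\mathcal{B}_s$ from Lemma \ref{basis} directly, using the arithmetic of the sequence $\{r_i\}$. Recall that a basis monomial is $P_0^{n_0}P_1^{n_1}\cdots P_i^{n_i}$ with $n_0 \geq 0$, $n_1,\dots,n_i \in \{0,1\}$, and $\nu$-value $n_0 + n_1 r_1 + \cdots + n_i r_i = s$. Since $P_0 = x$ has value $r_0 = 1$, once we fix the exponents $(n_1,\dots,n_i) \in \{0,1\}^i$ (and zero exponents for all $P_j$ with $j$ large), the exponent $n_0$ is forced to be $n_0 = s - \sum_{j \geq 1} n_j r_j$, and this gives a legitimate basis element precisely when $n_0 \geq 0$, i.e. when $\sum_{j \geq 1} n_j r_j \leq s$. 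So $\dim_k(I_s/I_{s+1})$ equals the number of finitely-supported $\{0,1\}$-sequences $(n_1,n_2,\dots)$ with $\sum_{j} n_j r_j \leq s$.

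First I would show that for $s$ in the stated range, every such qualifying sequence has $n_j = 0$ for all $j > i$. Indeed if some $n_{i+1} = 1$ then $\sum_j n_j r_j \geq r_{i+1} > s$ by the hypothesis $s < r_{i+1}$, a contradiction; and similarly $n_j = 0$ for all $j > i+1$ since $r_j \geq r_{i+1}$ for $j \geq i+1$ (the sequence is increasing). So the count reduces to the number of $(n_1,\dots,n_i) \in \{0,1\}^i$ with $n_1 r_1 + \cdots + n_i r_i \leq s$. Next I would show that \emph{every} element of $\{0,1\}^i$ qualifies: the largest possible value of $n_1 r_1 + \cdots + n_i r_i$ is $r_1 + r_2 + \cdots + r_i \leq r_0 + r_1 + \cdots + r_i < s$, where the last inequality uses the lower bound $s > r_0 + r_1 + \cdots + r_i$ in the hypothesis. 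Hence all $2^i$ choices of $(n_1,\dots,n_i) \in \{0,1\}^i$ give distinct basis elements (distinct because the exponent tuples are distinct, as noted in the proof of Lemma \ref{basis}), and conversely we have just argued there are no others. Therefore $\dim_k(I_s/I_{s+1}) = 2^i$.

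I do not expect a serious obstacle here: the only subtlety is making sure the interval in the hypothesis is nonempty and correctly handling the boundary — Lemma \ref{easy}(2) together with $r_{i+1} = 2r_i + 1 + e_{i+1} > 2r_i$ guarantees $r_0 + \cdots + r_i < 2r_i < r_{i+1}$ (and more precisely Remark \ref{spaceremark} was arranged so that the interval $(r_0+\cdots+r_i, r_{i+1})$ contains an integer), so the statement is not vacuous. One should be slightly careful that $s$ is assumed to be an integer (as it must be, being a value of $\nu$ on $R$, and $\mathcal{B}_s$ is only indexed by integers $s$), but this is immediate from context. The core of the argument is simply the two inequalities $r_1 + \cdots + r_i \leq r_0 + \cdots + r_i < s < r_{i+1} \leq r_j$ for $j \geq i+1$, which pin down exactly which monomials survive.
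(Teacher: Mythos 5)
Your argument is correct and is essentially the same as the paper's: both count the basis $\mathcal{B}_s$ from Lemma \ref{basis} by observing that $s < r_{i+1} \leq r_j$ for $j \geq i+1$ forces $n_j = 0$ for $j > i$, while $s > r_0 + \cdots + r_i \geq r_1 + \cdots + r_i$ shows that every choice of $(n_1,\dots,n_i) \in \{0,1\}^i$ gives a valid monomial with $n_0 = s - \sum n_j r_j \geq 0$. The paper packages the forced exponent as the function $N_0(i,s,n_1,\dots,n_i)$, but the underlying counting is identical to yours.
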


\begin{proof} 
%For any $i \in \mathbb{Z}_+$ and for $s$ such that $r_0+r_1+ \dots r_i < s < r_{i+1}$, and then 
Define the following function: $$N_0(i,s, n_1, \dots, n_i) = s-(n_1r_1+\dots + n_ir_i), $$ where $n_1, \dots, n_i \in \{0,1\}$, as in Lemma \ref{basis}. Since $s>r_1+\dots + r_i \geq n_1r_1+\dots + n_ir_i$, $N_0(i,s,n_1, \dots, n_i)$ is nonnegative. Note that for any choice of $n_1, \dots, n_i$, we have that $N_0(i,s,n_1, \dots, n_i)+n_1r_1+\dots +n_ir_i=s$, so that $[P_0^{N_0(i,s,n_1, \dots, n_i)}P_1^{n_1}\cdots P_i^{n_i}] \in \mathcal{B}_s$.\\

Furthermore, since $r_j>s$ for $j>i$ by Lemma \ref{easy}, we have that $n_0+n_1r_1+\cdots + n_{j-1}r_{j-1} + n_{j}r_{j} \neq s$ for \emph{any} choice of $j \geq i+1$ and $n_0, n_1, \dots, n_{j-1}, n_{j} \in \mathbb{N}$ with $n_j > 0$.\\

Thus the elements of $\mathcal{B}_s$ are precisely the following:\\

$$\{[P_0^{N_0(i,s,n_1, \dots, n_i)}P_1^{n_1}\cdots P_i^{n_i}] \mid n_1, \dots, n_i \in \{0,1\} \} .$$\\

Whence $dim_k(I_s/I_{s+1})=2^i.$\\

\end{proof}

%\begin{comment}
%\begin{Lemma} \label{zeroone}
% Let $\theta \in %(0,1) \cap 
%\mathbb{R}$ and let $\{e_j\}_{j \geq 2}$ be the associated sequence. Then $e_j \in \{0,1\}$ for all $j \geq 3$.
%\end{Lemma}

%\begin{proof}
% Let $j \in \mathbb{Z}_{\geq 0}$ be the least integer $n$ such that $e_n \geq 2$. Note that since $\theta \in (0,1)$, $e_1 \leq 1$, so $j >1$. Define $l := e_j - 2$. Then by definition of $e_j$ we have that $(l+2)2^{-j} \leq \theta - \sum_{k=1}^{j-1}{e_k2^{-k}} .$ Hence $l2^{-j} + 2^{1-j} \leq \theta - \sum_{k=1}^{j-1}{e_k2^{-k}}$. Since $l \geq 0$, $(e_{j-1}+1)2^{-(j-1)} \leq \theta - \sum_{k=1}^{j-2}{e_k2^{-k}}$, contradicting the definition of $e_{j-1}$. 
%\end{proof}

%\begin{Remark} \label{formula}
% Let $\theta \in \mathbb{R}_{>0}$ be given, and define the sequence $\{r_i\}_{i \geq 0}$ inductively by $r_0=1, r_1=1$, $r_{i+1}=2r_i+1+e_{i+1}$ for all $i \geq 1$. Then the general term $r_i$ is given by the formula $r_i=2^{i+1}-1+\sum_{j=1}^{i}{e_j2^{i-j}}$, where $\{e_j\}$ is as in Definition \ref{sequence}. \\
%\end{Remark}
%\end{comment}

\begin{Lemma} \label{othereasy}
Let $\{r_i\}_{i \geq 0} \subset \mathbb{Z}_{\geq 0}$ be a sequence, let $ \max(2r_2,r_0+r_1+r_2)=2r_2>r_0+r_1+r_2$, and let $r_{i+1}>\max(2r_i,r_0+\dots +r_i)$ for $i \geq 2$. Then $\max(2r_i,r_0+\dots +r_i)=2r_i>r_0+\dots r_i$ for all $i \geq 2$.
\end{Lemma}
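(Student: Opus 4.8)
The plan is to prove this by induction on $i$, in essentially the same way as the second claim of Lemma \ref{easy}. The statement to be established for each $i \geq 2$ is the conjunction $2r_i > r_0 + \dots + r_i$ (equivalently, that the maximum defining the hypothesis is attained by the first argument).

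First I would handle the base case $i = 2$, which is given to us directly: the hypothesis says $\max(2r_2, r_0+r_1+r_2) = 2r_2 > r_0+r_1+r_2$, so there is nothing to prove there.

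For the inductive step, suppose $i \geq 2$ and that $2r_i > r_0 + \dots + r_i$ holds. By hypothesis $r_{i+1} > \max(2r_i, r_0 + \dots + r_i)$, and in particular $r_{i+1} > r_0 + \dots + r_i$. Then I would write $2r_{i+1} = r_{i+1} + r_{i+1} > r_{i+1} + (r_0 + \dots + r_i) = r_0 + \dots + r_i + r_{i+1}$, which is exactly $2r_{i+1} > r_0 + \dots + r_{i+1}$. Since both $2r_{i+1}$ and $r_0 + \dots + r_{i+1}$ are the two arguments of the relevant maximum, this also shows $\max(2r_{i+1}, r_0 + \dots + r_{i+1}) = 2r_{i+1}$, completing the induction.

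There is no real obstacle here; the only thing to be careful about is unpacking the $\max$ in the hypothesis to extract the single inequality $r_{i+1} > r_0 + \dots + r_i$ that drives the step (the companion inequality $r_{i+1} > 2r_i$ is not needed for this particular claim). The argument does not even use that the $r_i$ are integers, only that they are real and the stated inequalities hold.
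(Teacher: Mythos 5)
Your proof is correct and follows essentially the same inductive outline as the paper. The one small difference is that the paper's inductive step routes through the induction hypothesis — it evaluates $\max(2r_k,r_0+\dots+r_k)=2r_k$ and then applies $2r_k>r_0+\dots+r_k$ — whereas you extract $r_{i+1}>r_0+\dots+r_i$ directly from $r_{i+1}>\max(2r_i,r_0+\dots+r_i)$ without caring which branch attains the maximum; in effect your ``inductive step'' never actually invokes the inductive hypothesis, so for $i\geq 3$ the inequality is really immediate from the hypothesis on $r_{i+1}$, with $i=2$ handled by the stated base case. This is a mild streamlining, not a different method.
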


\begin{proof} 

The case $i=2$ is given by hypothesis. Now suppose that the statement holds for some $k \geq 2$. Then $2r_{k+1} = r_{k+1} + r_{k+1} > r_{k+1} + \max(2r_k,r_0 + \dots + r_k) = r_{k+1} + 2r_k > r_{k+1} + (r_0 + \dots + r_k)$. Hence $\max(2r_{k+1},r_0 + \dots + r_{k+1}) = 2r_{k+1} > r_0 + \dots + r_{k+1}$, and the result follows by induction. \\
%Now assume the statement for $i=k \geq 2$. Since $r_{k+1}>r_0+\dots + r_{k}$, we have that $r_{k+1}+r_{k+1}>r_{k+1}+r_k+ \dots r_0$. Hence $\max(2r_{k+1},r_0+\dots +r_k+r_{k+1})=2r_{k+1}>r_{k+1}+r_k+ \dots +r_0$. The claim follows by induction.
\end{proof}

\begin{Remark} \label{check}
 Suppose $\{r_i\}_{i \in \mathbb{Z}_{\geq 0}}$ is constructed as in (\ref{generalformula}). Then $r_{i+1}>\max(2r_i,r_0+\dots+ r_i)$ for all $i \geq 2$ and the conclusions of Lemma \ref{othereasy} hold. By Remark \ref{spaceremark} and Lemma \ref{othereasy} it is enough to show that $2r_2 = 6 + 2e_2 > 5+e_2 = r_0 + r_1 + r_2$.  \\

% $r_i>r_0 + \dots + r_{i-1}$ for all $i \geq 1$. If $i=1$, then $r_1=2r_0+1+e_1>r_0$, so the statement holds. Suppose the statement holds for $i \leq k-1$. Then $r_k=2r_{k-1}+1+e_k=r_{k-1}+r_{k-1}+1+e_k>r_0+\dots+r_{k-2}+r_{k-1}+1+e_k$, so the statement holds for all $i \geq 0$.  \\
\end{Remark}

\begin{Lemma} \label{growth}
 For all $i \in \mathbb{Z}_{>0}$ we have that $r_{i+1}-(r_0+\dots + r_i) \geq i$.
\end{Lemma}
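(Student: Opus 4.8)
The plan is to prove the inequality $r_{i+1} - (r_0 + \dots + r_i) \geq i$ by induction on $i$, using the recursive definition (\ref{inductive_defn}) and the growth estimate in Lemma \ref{easy}. First I would establish the base case $i=1$: here $r_2 = 2r_1 + 1 + e_2 = 3 + e_2$ and $r_0 + r_1 = 2$, so $r_2 - (r_0 + r_1) = 1 + e_2 \geq 1$, which is exactly the claim for $i=1$. I would also quickly check $i = 2$ directly from (\ref{generalformula}) or the recursion to make sure the inductive step has something to bite on, since Lemma \ref{easy}(2) is only stated for $i \geq 2$.

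For the inductive step, suppose $r_{i+1} - (r_0 + \dots + r_i) \geq i$ for some $i \geq 1$. I want to bound $r_{i+2} - (r_0 + \dots + r_{i+1})$ from below. Using $r_{i+2} = 2r_{i+1} + 1 + e_{i+2} \geq 2r_{i+1} + 1$, I can write
\[
r_{i+2} - (r_0 + \dots + r_{i+1}) \geq 2r_{i+1} + 1 - (r_0 + \dots + r_{i+1}) = \bigl(r_{i+1} - (r_0 + \dots + r_i)\bigr) + 1 \geq i + 1,
\]
where the middle equality just regroups $2r_{i+1} - r_{i+1} = r_{i+1}$ and peels off the $r_{i+1}$ term from the sum. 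This closes the induction and gives the result for all $i \geq 1$.

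The main obstacle — and really the only thing to be careful about — is the bookkeeping at the low end: making sure the base case is the right one ($i = 1$, not $i = 2$), and that the regrouping $2r_{i+1} - (r_0 + \dots + r_{i+1}) = r_{i+1} - (r_0 + \dots + r_i)$ is written correctly (the term $r_{i+1}$ must be the one that survives the cancellation, leaving the sum up to index $i$). Everything else is a one-line consequence of $e_{i+2} \geq 0$ and the recursion. Note this argument does not even need the full strength of Lemma \ref{easy} or Lemma \ref{othereasy}; it uses only that $r_{i+1} \geq 2r_i + 1$, which is immediate from (\ref{inductive_defn}) since each $e_j \geq 0$. An alternative, non-inductive route would be to substitute the closed form (\ref{generalformula}) for all the $r_k$ and compute $r_{i+1} - \sum_{k=0}^i r_k$ directly as a sum of powers of $2$ plus $e$-terms, but the induction is cleaner and avoids summing the geometric-type series in $e_k 2^{\,\cdot}$.
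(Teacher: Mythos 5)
Your proof is correct. Your inductive approach differs from the paper's: the paper substitutes the closed form (\ref{generalformula}) for all the $r_k$, computes
$r_{i+1} - \sum_{j=0}^{i}r_j = i + \sum_{k=2}^{i+1}e_k 2^{i+1-k} - \sum_{j=2}^{i}\sum_{k=2}^{j}e_k 2^{j-k}$
explicitly, and then bounds the double sum by a case analysis on $\theta<\infty$ versus $\theta=\infty$, rewriting partial sums $\sum e_k 2^{1-k}$ in terms of the remainders $R_j$. Your induction bypasses all of that. After the base case $i=1$, the step
\[
r_{i+2} - \sum_{j=0}^{i+1}r_j \;\geq\; 2r_{i+1}+1 - \sum_{j=0}^{i+1}r_j \;=\; \Bigl(r_{i+1}-\sum_{j=0}^{i}r_j\Bigr)+1 \;\geq\; i+1
\]
needs only the inequality $r_{i+2}\geq 2r_{i+1}+1$, i.e.\ that $e_{i+2}\geq 0$, which holds in both cases of Definition \ref{sequence}. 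This is shorter, avoids the two-case split entirely, and in fact establishes the inequality for \emph{any} nonnegative sequence $\{e_j\}$, not just the ones produced by the $2$-adic expansion in Definition \ref{sequence} — a mild generalization. You also correctly identify the paper's closed-form computation as the alternative route you chose not to take. One small note: you need not verify $i=2$ separately; your inductive step applies from the $i=1$ base case directly, since the recursion $r_{i+1}=2r_i+1+e_{i+1}$ is valid for all $i\geq 1$ and you never invoke Lemma \ref{easy}(2).
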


\begin{proof}
 From (\ref{generalformula}) we can deduce that $r_{i+1}=2^{i+1}-1+\sum_{j=2}^{i+1}{e_j2^{i+1-j}}$ and $\sum_{j=0}^{i}{r_j} = 2^{i+1} - 1 - i + \sum_{j=2}^{i}\sum_{k=2}^{j}e_k2^{j-k}$. Hence $$r_{i+1} - \sum_{j=0}^{i}r_j = i + \sum_{k=2}^{i+1}e_k2^{i+1-k} - \sum_{j=2}^{i}\sum_{k=2}^{j}e_k2^{j-k}$$ and it is enough to show that $\sum_{k=2}^{i+1}e_k2^{i+1-k} - \sum_{j=2}^{i}\sum_{k=2}^{j}e_k2^{j-k} \geq 0$. \\

%2^{i+2}-3-i+\sum_{j=1}^{i}{\sum_{k=1}^{j}{e_k2^{j-k}}}$. Hence $$r_{i+1}-\sum_{j=0}^{i}{r_j} = 2+i+\sum_{j=1}^{i+1}{e_j2^{i+1-j}} - \sum_{j=1}^{i}{\sum_{k=1}^{j}{e_k2^{j-k}}},$$ and it suffices to show that $\sum_{j=1}^{i+1}{e_j2^{i+1-j}} - \sum_{j=1}^{i}{\sum_{k=1}^{j}{e_k2^{j-k}}} \geq 0$.\\ 
 
 Suppose first that $\theta < \infty$. We have that $$\sum_{k=2}^{i+1}{e_k2^{i+1-k}} - \sum_{j=2}^{i}{\sum_{k=2}^{j}{e_k2^{j-k}}}$$ 

 $$= e_{i+1} + 2^{i}\sum_{k=2}^{i}{e_k2^{1-k}} - \sum_{j=2}^{i}{2^{j-1}\sum_{k=2}^{j}{e_k2^{1-k}}}$$ 

 $$= e_{i+1} + 2^{i}(\theta-R_i) - \sum_{j=2}^{i}{2^{j-1}(\theta - R_j)}$$
 
 $$\geq  e_{i+1} + 2^{i}(\theta-R_i) - \sum_{j=2}^{i}{2^{j-1}(\theta - R_i)}$$
 
 $$=  e_{i+1} + 2^{i}(\theta-R_i) - (2^{i}-2)(\theta - R_i)$$

 $$=  e_{i+1} + 2(\theta-R_i) \geq 0,$$ \\

 the penultimate inequality since for all $1 \leq j \leq i$, we have that $\theta - R_j \leq \theta - R_i$. \\
 
 Suppose next that $\theta = \infty$. We have that $$\sum_{k=2}^{i+1}{e_k2^{i+1-k}} - \sum_{j=2}^{i}{\sum_{k=2}^{j}{e_k2^{j-k}}}$$ 

 $$= e_{i+1} + 2^{i}\sum_{k=2}^{i}{e_k2^{1-k}} - \sum_{j=2}^{i}{2^{j-1}\sum_{k=2}^{j}{e_k2^{1-k}}}$$ 

 $$= e_{i+1} + 2^{i}(i-1) - \sum_{j=2}^{i}{2^{j-1}(j-1)}$$
 
 $$\geq  e_{i+1} + 2^{i}(i-1) - \sum_{j=2}^{i}{2^{j-1}(i-1)}$$
 
 $$=  e_{i+1} + 2^{i}(i-1) - (2^{i}-2)(i-1)$$

 $$=  e_{i+1} + 2(i-1) \geq 0.$$

\end{proof}

\begin{Theorem} \label{example}
There exists a regular local ring $(R,\mathfrak{m}_R)$ of dimension 2 and a discrete, rank 1 valuation $\nu$ of the quotient field of $R$ dominating $R$, such that the function $\alpha(n)=\ell_R(I_n/I_{n+1})$ is not a quasi-polynomial plus a bounded function for large integers $n$.  %Here $I_n=\{f \in R \mid \nu(f) \geq n \}$.
\end{Theorem}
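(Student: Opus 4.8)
The plan is to exhibit the ring and valuation produced by the construction of Section~\ref{infinitesection} and show that for a suitable choice of the parameter $\theta$ the function $\alpha(n) = \ell_R(I_n/I_{n+1})$ grows too slowly and too irregularly to be a quasi-polynomial plus a bounded function. First I would take $\theta = \infty$ (equivalently, the sequence $e_j = 2^{j-1}$, so $r_{i+1} = 2r_i + 2^i$ by (\ref{inductive_defn})), and let $R = k[[x,y]]$ and $\nu$ be the valuation attached to $\{r_i\}$ by Lemma~\ref{propassume}; by Lemma~\ref{easy} and Remark~\ref{check} the hypotheses of all the preceding lemmas are met, so $R$ is a $2$-dimensional regular local ring and $\nu$ is a discrete rank~$1$ valuation dominating it. The key structural input is Lemma~\ref{slab}: for every $i \ge 1$ and every $s$ in the open interval $(r_0 + \dots + r_i, r_{i+1})$ we have $\alpha(s) = 2^i$, and by Lemma~\ref{growth} this interval is nonempty (indeed of length $\ge i$) for all $i > 0$. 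Thus $\alpha$ is constant equal to $2^i$ on a long stretch of integers, then jumps up to $2^{i+1}$ on the next long stretch, and so on; in particular $\alpha$ is bounded on no infinite set of the form $\{n : n \gg 0\}$ yet takes each value $2^i$ on arbitrarily long runs.

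Next I would argue that such an $\alpha$ cannot equal $Q(n) + \sigma(n)$ for $n \gg 0$ with $Q$ a quasi-polynomial and $\sigma$ bounded. The cleanest route is by the degree of $Q$. If $\deg Q = 0$, then $Q(n) + \sigma(n)$ is a bounded function, but $\alpha(n) = 2^i \to \infty$ along the intervals of Lemma~\ref{slab}, a contradiction. If $\deg Q \ge 1$, write $Q(n) = a_d(n) n^d + \dots$ with $a_d$ periodic of some period $p$ and not identically zero, so $Q(n) \to \pm\infty$ at a polynomial rate $\gg n$ along the residue classes mod $p$ on which $a_d$ is nonzero; then $Q(n) + \sigma(n)$ is unbounded with $|Q(n)+\sigma(n)| \ge c\, n$ for $n$ large in those classes. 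But on the slab $(r_0+\dots+r_i, r_{i+1})$ the function $\alpha$ is constantly $2^i$, while by (\ref{generalformula}) (with $e_j = 2^{j-1}$) the right endpoint $r_{i+1}$ is of order $2^i \cdot i$, so an $n$ in this slab can be taken of size comparable to $r_{i+1}$ while $\alpha(n) = 2^i$; comparing, $2^i \ge c\, n \approx c\, 2^i i$ forces $i$ bounded, a contradiction. Hence no such decomposition exists. (One must check the slab meets every residue class mod $p$ for $i$ large; this is immediate since by Lemma~\ref{growth} the slab has length $\ge i \ge p$ once $i \ge p$.)

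The main obstacle I anticipate is making the second paragraph airtight: a quasi-polynomial's behavior is only controlled residue class by residue class, so I must ensure that the "long slab on which $\alpha$ is constant" genuinely intersects the residue classes where $Q$ is large, and that the size of $n$ I pick in the slab is genuinely of order $r_{i+1} \sim 2^i i$ rather than merely $> r_0 + \dots + r_i \sim 2^{i+1}$ — otherwise the ratio $\alpha(n)/n$ stays bounded away from $0$ and the argument stalls. Using $\theta = \infty$ is what buys the extra logarithmic factor $i$ in the slab length (via Lemma~\ref{growth}), which is exactly what is needed to defeat a degree-$1$ quasi-polynomial; for finite $\theta$ the slab length is only bounded below by $i$ as well (Lemma~\ref{growth} is stated uniformly), so in fact the same choice works, but I would present it with $\theta = \infty$ for concreteness and because the formula $r_i = 2^{i+1} - 1 - i + \sum \sum e_k 2^{j-k}$ is then fully explicit. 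A secondary, purely bookkeeping obstacle is citing Lemma~\ref{slab} at the correct endpoints: the slab is the \emph{open} interval, and one should note it contains at least one integer (in fact $\ge i$ of them) by Lemma~\ref{growth}, so that $\alpha$ really is sampled there.
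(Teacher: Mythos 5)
Your proposal is essentially correct for the existential statement of Theorem~\ref{example}, but it takes a genuinely different route from the paper's, and the difference has consequences worth spelling out.

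The paper's argument applies to \emph{every} sequence $\{r_i\}$ produced by (\ref{inductive_defn}), for every $\theta \in \mathbb{R}_{\geq 0}\cup\{\infty\}$. It proceeds by examining each residue class $p$ mod the period $s$, forming the difference $\Delta_t A_p$, and showing that because $\alpha$ is constant on arbitrarily long slabs (Lemmas~\ref{slab} and~\ref{growth}), $\Delta_t A_p$ is bounded on an infinite sequence and hence constant; so $A_p$ has degree $\leq 1$. Then $m_p>0$ is forced since $\alpha \to \infty$, and finally the slab of length $\geq i_{s,k}$ forces the bounded part $b_p$ to drop by at least $k\,m_p$, a contradiction. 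The key feature is that it works even when $\lim \alpha(n)/n = \frac{1}{2+\theta} > 0$, and that generality is what Corollary~\ref{secondnon-poly} and Theorem~\ref{secondset_of_limits} actually use when they plug in arbitrary $\theta$.

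Your argument instead fixes $\theta=\infty$, so that $r_{i+1}\sim i\,2^{i}$ and $\alpha(n)/n \to 0$ (Corollary~\ref{limitdone}(2)). You then rule out degree $0$ because $\alpha$ is unbounded, and rule out degree $\geq 1$ because on the residue class where the top coefficient is nonzero you get $|Q(n)|\geq cn$, incompatible with $\alpha(n)/n \to 0$. This is a correct and cleaner route: it avoids the difference-operator bookkeeping and bypasses the analysis of the linear quasi-polynomial case entirely. What it gives up is generality — it only works for $\theta=\infty$, and cannot be used to recover the version of the result that the later proofs cite.

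Two small corrections you should make. First, your parenthetical ``for finite $\theta$ the slab length is only bounded below by $i$ as well \ldots so in fact the same choice works'' is not right: for finite $\theta$ one has $\alpha(n)/n \to \frac{1}{2+\theta} > 0$, so the inequality $2^i \geq c\,n$ no longer forces $i$ bounded. The slab \emph{length} $\geq i$ holds for all $\theta$ (Lemma~\ref{growth}), but your contradiction uses the \emph{endpoint} $r_{i+1}\sim i\,2^i$, which is special to $\theta=\infty$. Second, phrase the degree-$\geq 1$ step carefully: one must separately observe that if $a_d(p)<0$ then $Q(p+st)\to -\infty$, contradicting $\alpha>0$ directly, and if $a_d(p)>0$ then $\alpha(n)\geq c\,n - M$, contradicting $\alpha(n)=2^i$ with $n$ in class $p$ chosen in the slab (which is possible once $i\geq s$ by the slab-length bound). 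With these two points tidied, the proposal is a valid alternative proof of Theorem~\ref{example}.
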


\begin{proof}

Take $R=k[[x,y]]$ and consider the valuation $\nu$ constructed by the generating sequence of Lemma \ref{propassume}, along with the associated function $\alpha(n)=\ell_R(I_n/I_{n+1})$. We will show that if $\nu$ has a generating sequence defined inductively by: $\nu(P_0)=r_0=1$, $\nu(P_1)=r_1=1$ and $\nu(P_{i+1})=r_{i+1}$ where $\{r_i \}$ is defined by (\ref{inductive_defn}), then $\alpha(n)$ cannot be the sum of a quasipolynomial and a bounded function for large $n$. \\

Suppose by way of contradiction that there exists $n_0>0$ such that $$\alpha(n)=A(n)+b(n),$$ for $n \geq n_0$, where $$A(n)=\alpha_d(n)n^d+\dots + \alpha_1(n)n+\alpha_0(n),$$ the $\alpha_i(n)$ are periodic functions of integral period, $s$ is their common integral period, and $-M \leq b(n) \leq M$ is a bounded function. By defining $b(n)=\alpha(n)-A(n)$ for $n\le n_0$, we can take $n_0$ to be $1$. \\

For each $0 \leq p <s$, consider the function $$A_p(t):= A(p+st) =  \alpha_d(p+st)(p+st)^d + \dots + \alpha_1(p+st)(p+st)+\alpha_0(p+st).$$ The coefficients $\alpha_d(p+st), \dots, \alpha_1(p+st), \alpha_0(p+st)$ are constant functions of $t \in \mathbb{Z}_+$, and are each equal to $\alpha_d(p), \dots, \alpha_1(p), \alpha_0(p)$, respectively. Expanding the powers of $p+st$ shows that these are polynomials of degree $ \leq d$ in $t$. %At least one of them is nonzero, since otherwise $A(n)$ would be identically zero, and then $\alpha(n)$ would be a bounded function. 
Thus we may consider $s$ (not necessarily distinct) polynomial functions of $t \in \mathbb{Z}_+$:  $$A_p(t)=\alpha_d(p)(p+st)^d + \dots + \alpha_1(p)(p+st)+\alpha_0(p),$$ and also $s$ (not necessarily distinct) bounded functions of $t \in \mathbb{Z}_+$:  $$b_p(t)=b(p+st),$$ for $0 \leq p < s$.\\ 

For each $0 \leq p <s$, define $f_p(t) := A_p(t)+b_p(t)$. For any function $\psi(t)$, define $\Delta_t \psi(t) := \psi(t) - \psi(t-1)$. Thus for all $0 \leq p <s$, $\Delta_t f_p(t) = \Delta_t A_p(t) + \Delta_t b_p(t)= A_{p}(t) - A_{p}(t-1) +b_{p}(t) - b_{p}(t-1)$.\\

The sequence $\{ r_i \}_{i \in \mathbb{Z}_+}$ satisfies $r_{i+1}-(r_0+\dots+r_i) \geq i$. Hence $r_{i+1}>r_0+\dots+ r_i +2s$ for all $i$ sufficiently large. Then by Lemma \ref{slab}, for each $0 \leq p < s$, $\Delta_t f_p(t)$ has infinitely many zeroes in $\mathbb{N}$. Therefore for each $0 \leq p < s$, there is an infinite increasing sequence $\{t_l\}_{l \in \mathbb{N}} \subset \mathbb{N}$ such that such that $r_0+\dots +r_l<p+(t_l-1)s<p+t_ls<r_{l+1}$ and $\Delta_t A_p(t_l) +\Delta_t b_p(t_l) =0$; i.e.: $\Delta_t A_p(t_l) =-\Delta_t b_p(t_l)$. Hence the sequence $\{\Delta_t A_p(t_l)\}_{l \in \mathbb{N}}$ is bounded (by $-2M$ and $2M$).\\

For each $0 \leq p < s$, $\Delta_t A_p(t)$ is a polynomial, and is a continuous function on $\mathbb{R}$. If $\Delta_t A_p(t)$ is nonconstant, then either $\lim_{l \rightarrow \infty}{\Delta_t A_p(t_l)}=\infty$ or $\lim_{l \rightarrow \infty}{\Delta_t A_p(t_l)}=-\infty$. Thus for each $0 \leq p < s$, $\Delta_t A_p(t)$ is constant, and therefore $A_p(t)$ is either constant or linear. For $0 \leq p < s$, write $A_p(t)=m_pt+c_p$. \\

We cannot have $m_p \leq 0$ for any $0 \leq p <s$. By way of contradiction, suppose that $m_{p'} \leq 0$. Recall that $b_{p'}(t)=f_{p'}(t) - m_{p'}t-c_{p'}$. By Lemma \ref{slab}, and since $r_{i+1}>r_0+\dots +r_i +s$ for all $i$ sufficiently large, we can find an increasing sequence $\{t_k\}_{k \in \mathbb{N}}$ such that $r_0+\dots +r_k<p'+t_ks<r_{k+1}$ and $f_{p'}(t_k)=2^k$ hold for all $k\gg 0$. Then for all $k\gg 0$, we have $b_{p'}(t_k)=2^k- m_{p'}t_k+c_{p'}$, and so letting $k \rightarrow \infty$, we get that $b_{p'}(t_k) \rightarrow \infty$, contradicting that $b_{p'}(t)$ was bounded. Thus, all the $m_p > 0$ for $0 \leq p < s$.\\

Let $0 \leq p <s$ be given. Then $b_p(t)=f_p(t)-m_pt-c_p$ for $t \in \mathbb{Z}_+$. Consider the subsequence $\{i_{s,k}\}_{k \in \mathbb{N}}$ such that $i_{s,k}= s(k+1)+1$. Then by Lemma \ref{growth}, $r_{i_{s,k}+1} - (r_0+\dots+r_{i_{s,k}}) > s(k+1)$. For each $k \in \mathbb{N}$, define $t_{k,max}=\max\{t \in \mathbb{Z}_+ \mid p+st < r_{i_{s,k}+1} \}$ and $t_{k,min}=\min\{t \in \mathbb{Z}_+ \mid p+st > r_0 + \dots +r_{i_{s,k}} \}$. Then $f_p(t)=2^{i_{s,k}}$ for $t_{k,min} \leq t \leq t_{k,max}$ by Lemma \ref{slab}, and $t_{k,max} - t_{k,min} \geq k$. Thus for all $k \in \mathbb{N}$, $$b_p(t_{k,max})-b_p(t_{k,min})=m_p(t_{k,min}-t_{k,max}) \leq -km_p.$$ Let $N \in \mathbb{N}$ be arbitrary and let $k>\frac{N}{m_p}$. Then $b_p(t_{k,max})-b_p(t_{k,min})<-N$. Now taking $N>2M$ contradicts that $-M \leq b(n) \leq M$ was bounded, since if $-M \leq b(n) \leq M$, then $-2M \leq b(n_1)-b(n_0) \leq 2M$ for any $n_0,n_1 \in \mathbb{Z}_+$. \\

Thus we see that for this choice of the sequence $\{r_i \}_{i \in \mathbb{Z}_+}$, $\alpha(n)$ cannot be written as a quasi-polynomial plus a bounded function. This proves the theorem.\\

\end{proof} 

\begin{Corollary} \label{secondnon-poly}
There exists a regular local ring $(R,\mathfrak{m}_R)$ of dimension 2 and a discrete, rank 1 valuation $\nu$ of the quotient field of $R$ dominating $R$, such that the function $\ell_R(R/I_n)$ cannot be written as a quasi-polynomial plus a bounded function for large integers $n$.  %Here $I_n=\{f \in R \mid \nu(f) \geq n \}$.
\end{Corollary}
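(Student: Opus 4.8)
The plan is to deduce this corollary directly from Theorem \ref{example} via the standard relation $\ell_R(R/I_n) = \sum_{k=0}^{n-1}\ell_R(I_k/I_{k+1})$, equivalently $\ell_R(I_n/I_{n+1}) = \ell_R(R/I_{n+1}) - \ell_R(R/I_n)$. I would take the \emph{same} regular local ring $R = k[[x,y]]$ and the \emph{same} valuation $\nu$ exhibited in the proof of Theorem \ref{example}, so that the function $\alpha(n) = \ell_R(I_n/I_{n+1})$ is known not to be a quasi-polynomial plus a bounded function for large $n$.

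The argument is by contradiction. Suppose there were an integer $n_0$, a quasi-polynomial $Q(n) = a_d(n)n^d + \dots + a_1(n)n + a_0(n)$ with the $a_i(n)$ periodic, and a bounded function $\sigma(n)$ with $\ell_R(R/I_n) = Q(n) + \sigma(n)$ for all $n \geq n_0$. Applying the forward difference $\Delta\psi(n) := \psi(n+1) - \psi(n)$, I get $\alpha(n) = \bigl(Q(n+1) - Q(n)\bigr) + \bigl(\sigma(n+1) - \sigma(n)\bigr)$ for $n \geq n_0$. Then I invoke two elementary closure facts: first, $\sigma(n+1) - \sigma(n)$ is again a bounded function; second, $Q(n+1) - Q(n)$ is again a quasi-polynomial (of the same period). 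For the latter one writes $Q(n+1) = \sum_i a_i(n+1)(n+1)^i$, expands $(n+1)^i = \sum_{j \leq i}\binom{i}{j}n^j$ binomially, and collects powers of $n$; the resulting coefficient of $n^j$, namely $\sum_{i \geq j}\binom{i}{j}a_i(n+1)$, is periodic in $n$ because each $a_i(n+1)$ is, so $Q(n+1)$ and hence $Q(n+1) - Q(n)$ is a quasi-polynomial. Combining, $\alpha(n)$ would be a quasi-polynomial plus a bounded function for $n \geq n_0$, contradicting Theorem \ref{example}.

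Since the reasoning is purely formal, there is essentially no genuine obstacle here; the only thing requiring (very mild) care is checking the closure of the class of quasi-polynomials under forward differences straight from the definition given in the introduction, and keeping the period of the coefficients and the threshold $n_0$ consistent throughout. This proves the corollary.
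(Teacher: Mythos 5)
Your proof is correct and is essentially the same argument the paper gives: take the ring and valuation from Theorem \ref{example}, apply the forward difference to a hypothetical decomposition $\ell_R(R/I_n)=Q(n)+\sigma(n)$, observe that the class of quasi-polynomials is closed under forward differences (via the same binomial expansion the paper uses) and that differences of bounded functions are bounded, and derive a contradiction with Theorem \ref{example}.
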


\begin{proof}
Let $R$ and $\nu$ be as in the proof of Theorem \ref{example}, and suppose that $\ell_R(R/I_n) = Q(n)+\sigma(n)$ for $n \gg 0$, where $Q(n)=\sum_{l=0}^{l=d}{\alpha_l(n)n^l}$ is a quasi-polynomial and $\sigma(n)$ is a bounded function, say $|\sigma(n)| \leq M$. Then we have that: $$\ell_R(I_n/I_{n+1}) = \ell_R(R/I_{n+1}) - \ell_R(R/I_n) = (Q(n+1)-Q(n)) + (\sigma(n+1) - \sigma(n))$$ Let $s$ be the common integral period of the coefficients $\{\alpha_l(n)\}_{0 \leq l \leq d}$ of $Q(n)$. Then $$Q(n+1)-Q(n)=\sum_{p=0}^{p=d}\left({\left[\sum_{l=p}^{l=d}{\choos{l}{p}\alpha_l(n+1)}\right]-\alpha_p(n)}\right)n^p $$ is also a quasi-polynomial with coefficients of  integral period $s$. Further, $-2M \leq \sigma(n+1) - \sigma(n) \leq 2M$ is a bounded function. Thus $\ell_R(I_n/I_{n+1})$ is a quasi-polynomial plus a bounded function, contradicting Theorem \ref{example}.\\
\end{proof}

\begin{Proposition} \label{recursive}
 Let $\alpha(n)=\ell_R(I_n/I_{n+1})$. Then the following recursive relation holds: for any $n \in \mathbb{Z}_{> 0}$, consider the unique $i \in \mathbb{Z}_{\geq 0}$ such that $n \in [r_i,r_{i+1})$. Then we have that $\alpha(n)=\alpha(r_i-1)+\min\{\alpha(r_i-1),\alpha(n-r_i)\}.$
\end{Proposition}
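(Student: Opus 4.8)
The plan is to describe $\mathcal{B}_n$ explicitly using the basis from Lemma \ref{basis} and partition the monomials according to the exponent $n_i$ of $P_i$. Fix $n \in [r_i, r_{i+1})$ and recall that by Lemma \ref{slab}-type reasoning (and the inequalities of Lemma \ref{easy}), since $r_j > n$ for all $j > i$, every basis monomial $[P_0^{n_0}P_1^{n_1}\cdots P_j^{n_j}] \in \mathcal{B}_n$ has $n_j = 0$ for $j > i$; so $\mathcal{B}_n$ consists of monomials $[P_0^{n_0}P_1^{n_1}\cdots P_i^{n_i}]$ with $n_1, \dots, n_i \in \{0,1\}$ and $n_0 + n_1 r_1 + \cdots + n_i r_i = n$, with $n_0 \geq 0$ forced to equal $n - (n_1 r_1 + \cdots + n_i r_i)$ whenever that quantity is $\geq 0$.

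First I would split $\mathcal{B}_n = \mathcal{B}_n^{(0)} \sqcup \mathcal{B}_n^{(1)}$ where $\mathcal{B}_n^{(\varepsilon)}$ is the subset with $n_i = \varepsilon$. The monomials in $\mathcal{B}_n^{(0)}$ are $[P_0^{n_0}P_1^{n_1}\cdots P_{i-1}^{n_{i-1}}]$ with $n_1,\dots,n_{i-1}\in\{0,1\}$ and $n_0 = n - (n_1 r_1 + \cdots + n_{i-1}r_{i-1})$; since $n < r_{i+1}$ and $n \geq r_i > r_0 + \cdots + r_{i-1}$ (by Lemma \ref{easy}(2), valid for $i \geq 2$; the cases $i=0,1$ are handled separately), the quantity $n_0$ is automatically nonnegative for every choice of $n_1,\dots,n_{i-1}$. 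So $|\mathcal{B}_n^{(0)}|$ equals the number of $(n_1,\dots,n_{i-1})\in\{0,1\}^{i-1}$, which is $2^{i-1}$; and by Lemma \ref{slab} this is exactly $\alpha(r_i - 1)$ since $r_i - 1 \in (r_0+\cdots+r_{i-1}, r_i)$. The monomials in $\mathcal{B}_n^{(1)}$ are $[P_0^{n_0}P_1^{n_1}\cdots P_{i-1}^{n_{i-1}}P_i]$ with $n_0 + n_1 r_1 + \cdots + n_{i-1}r_{i-1} = n - r_i$, i.e. they correspond bijectively (strip off the factor $P_i$) with the monomials of $\mathcal{B}_{n-r_i}$ that involve only $P_0,\dots,P_{i-1}$. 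Since $n - r_i \in [0, r_{i+1}-r_i)$ and $r_{i+1} - r_i > r_i$... here I must be careful: I want every element of $\mathcal{B}_{n-r_i}$ to involve only $P_0,\dots,P_{i-1}$, which holds because $n - r_i < r_i \leq r_j$ for... no — $n - r_i$ could still be large. The correct statement is: $\mathcal{B}_{n-r_i}$ may involve $P_i$ if $n - r_i \geq r_i$, but that is impossible only when $n < 2r_i$; in general $n - r_i < r_{i+1} - r_i = r_i + 1 + e_{i+1}$, which is not quite $< r_i$. So the map "multiply by $P_i$" sends $\mathcal{B}_{n-r_i}$ into $\mathcal{B}_n^{(1)}$ but may miss those monomials of $\mathcal{B}_{n-r_i}$ already containing $P_i$ (there can be at most the image of $\mathcal{B}_{n-2r_i}$). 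This gives $|\mathcal{B}_n^{(1)}| = \alpha(n - r_i) - (\text{number of monomials in } \mathcal{B}_{n-r_i} \text{ containing } P_i) = \alpha(n-r_i) - |\mathcal{B}_{n-r_i}^{(\geq i)}|$, and by the same splitting logic applied at level $n - r_i$ one bounds this count; when $\alpha(n-r_i) \leq \alpha(r_i - 1) = 2^{i-1}$ the count is $0$ and we get $\alpha(n) = 2^{i-1} + \alpha(n-r_i)$, while when $\alpha(n-r_i) > 2^{i-1}$ it saturates at $\alpha(n) = 2^{i-1} + 2^{i-1} = 2 \cdot 2^{i-1} = \alpha(r_i-1) + \alpha(r_i-1)$. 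Together these two cases are exactly $\alpha(n) = \alpha(r_i-1) + \min\{\alpha(r_i-1), \alpha(n-r_i)\}$.

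The main obstacle, then, is the careful bookkeeping in $\mathcal{B}_n^{(1)}$: showing precisely that stripping $P_i$ identifies $\mathcal{B}_n^{(1)}$ with $\{[\gamma] \in \mathcal{B}_{n-r_i} : \deg_{P_i}\gamma = 0\}$ and then proving that the number of these equals $\min\{2^{i-1}, \alpha(n-r_i)\}$. For the latter I would induct on $i$ (equivalently, descend on $n$): the count of $P_i$-free monomials in $\mathcal{B}_m$ for $m < r_{i+1}$ is itself governed by the same splitting at the top index $\leq i-1$ occurring in $m$, and comparing with $\alpha(m)$ gives the min. It is cleanest to prove a slightly strengthened statement by induction, namely that for $n \in [r_i, r_{i+1})$, the number of monomials in $\mathcal{B}_n$ of $P_i$-degree $1$ equals $\min\{2^{i-1}, \alpha(n - r_i)\}$ and of $P_i$-degree $0$ equals $2^{i-1}$; the Proposition follows by adding. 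The base cases $i = 0$ (so $n \in [1,1)$, vacuous) and $i = 1$ (so $n \in [r_1, r_2) = [1, 3+e_2)$, where $\alpha(r_1-1)=\alpha(0)=1$) are checked directly against Lemma \ref{basis}.
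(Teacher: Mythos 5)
Your proposal splits $\mathcal{B}_n$ by the exponent of $P_i$ into $\mathcal{B}_n^{(0)} \sqcup \mathcal{B}_n^{(1)}$, counts each piece by stripping off $P_i$, and uses Lemma~\ref{persist}-type monotonicity to package the answer as a min — this is essentially the paper's proof, which uses the same decomposition into $\mathcal{B}_n^0$ and $\mathcal{B}_n^1$ and the same case split on $n - r_i \gtrless r_i$. The one small hiccup is that $r_i - 1$ may equal $r_0 + \cdots + r_{i-1}$ (e.g.\ $i=2$, $e_2=0$), so citing Lemma~\ref{slab} for $\alpha(r_i-1) = 2^{i-1}$ at the open-interval endpoint is not quite legitimate; but the direct count you already perform for $|\mathcal{B}_n^{(0)}|$ (which only needs $r_i - 1 \geq r_1 + \cdots + r_{i-1}$) patches this immediately, whereas the paper avoids the issue altogether by constructing an explicit bijection $\lambda\colon\mathcal{B}_n^0 \to \mathcal{B}_{r_i-1}$ and never evaluating either side.
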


%--------------Need to check the above proposition carefully 20140210-1550
%--------------Checked several calculations

\begin{proof}
 Let $n \in \mathbb{Z}_{\geq 0}.$ Recall from Lemma \ref{basis} that a $k$-basis for $I_n/I_{n+1}$ is given by the set $$\mathcal{B}_n=\{[P_0^{n_0}P_1^{n_1}\dots P_i^{n_i}] \mid i \geq 0, n_k \in \mathbb{Z}_{\geq 0} \text{ for } k \geq 0, n_k \in \{0,1\} \text{ for } 1 \leq k \leq i, n_0+\sum_{k=1}^{i}{n_kr_k}=n\}.$$ Thus we may canonically identify the set $\mathcal{B}_n$ with the set of tuples 
 
 \begin{equation} \label{tuples}
\mathcal{B}_n \cong \{(n_0,n_1,\dots,n_i) \in \mathbb{N} \times \{0,1\}^i \mid n_0 + \sum_{k=1}^{i}{n_kr_k}=n \}.  
 \end{equation}

 where $i$ is the unique $j \in \mathbb{Z}_{\geq 0}$ such that $n \in [r_j,r_{j+1})$.\\
 
 Define $\mathcal{B}_n^0 = \{(n_0,n_1,\dots,n_i) \in \mathcal{B}_n \mid n_i=0 \}$ and $\mathcal{B}_n^1 = \{(n_0,n_1,\dots,n_i) \in \mathcal{B}_n \mid n_i=1 \}$.\\
 
 Let us first show that $|\mathcal{B}_m^0| = |\mathcal{B}_{r_i-1}|$. Suppose that $(n_0,n_1,\dots,n_i) \in \mathcal{B}_n^0$. Then $n_0+\sum_{j=1}^{i-1}{n_jr_j}=n$. By Lemma \ref{easy}, $\sum_{j=1}^{i-1}{n_jr_j} \leq \sum_{j=1}^{i-1}{r_j} < r_i \leq n$, hence $n_0=n-\sum_{j=1}^{i}{n_jr_j} > n-r_i \geq 0$ and so $n_0-n+r_i-1 \geq 0$. Let us define a mapping $\lambda : \mathcal{B}_n^0 \rightarrow \mathcal{B}_{r_i-1}$ by $\lambda((n_0,n_1,\dots,n_{i-1},n_{i}))=(n_0-n+r_i-1, n_1, \dots, n_{i-1})$. Notice that $\lambda$ is injective. For, suppose that $\lambda((n_0,n_1,\dots,n_{i-1},n_{i}))=(n_0-n+r_i-1, n_1, \dots, n_{i-1})=(n_0'-n+r_i-1, n_1', \dots, n_{i-1}')=\lambda((n_0',n_1',\dots,n_{i-1}',n_{i}'))$. By (\ref{tuples}) for the case $m=r_i-1$ we have that $n_j=n_j'$ for $1 \leq j \leq i$, whence $n_0=n_0'$ and $\lambda$ is injective. Furthermore, if $(n_0,n_1,\dots,n_{i-1}) \in \mathcal{B}_{r_i-1}$, then $(n_0+n-r_i+1,n_1,\dots,n_{i-1},0) \in \mathcal{B}_n$ and $\lambda((n_0+n-r_i+1,n_1,\dots,n_{i-1},0))=(n_0,n_1,\dots,n_{i-1})$, whence $\lambda$ is surjective. Thus $\lambda$ gives a bijection between the finite sets $\mathcal{B}_n^0$ and $\mathcal{B}_{r_i-1}$, and $|\mathcal{B}_n^0| = |\mathcal{B}_{r_i-1}|$. \\
 
 Next, we shall show that $$
|\mathcal{B}_n^1| =  \left\{
        \begin{array}{ll}
            |\mathcal{B}_{r_i-1}| & \text{ if } n-r_i \geq r_i \\
            |\mathcal{B}_{n-r_i}| &  \text{ if } n-r_i < r_i
        \end{array}
    \right.
$$

To this end, suppose that $(n_0,n_1,\dots,n_{i-1},n_i=1) \in \mathcal{B}_n^1$. Then $n_0+\sum_{j=1}^{i-1}{r_jn_j}=n-r_i$. So, we have a (injective) mapping $\mu : \mathcal{B}_n^1 \rightarrow \mathcal{B}_{n-r_i}$, given by $\mu((n_0,n_1,\dots,n_{i-1},n_i=1))=(n_0,n_1,\dots,n_{i-1})$. We claim that if $n-r_i<r_i$, then $\mu$ is a bijection between $\mathcal{B}_n^1$ and $\mathcal{B}_{n-r_i}$, and if $n-r_i \geq r_i$, then $\mu$ is a bijection between $\mathcal{B}_n^1$ and $\mathcal{B}_{n-r_i}^0$. \\

Suppose that $n-r_i<r_i$. Then in any representation $n-r_i=n_0+\sum_{j=1}^{i}{r_jn_j}$ as in (\ref{tuples}), we must have $n_i=0$. It follows that $\mu$ is onto. Similarly, if $n-r_i \geq r_i$, we have that $\mu$ is onto $\mathcal{B}_{n-r_i}^0$ since $n-r_i < r_{i+1}$. Further, $\mathcal{B}_{n-r_i}^0$ maps bijectively onto $\mathcal{B}_{r_i-1}$ via the map $(n_0,n_1,\dots,n_{i-1},n_i=0) \rightarrow (n_0-n+2r_i-1,n_1,\dots,n_{i-1})$.\\

Notice next that by Lemma \ref{persist}, if $n-r_i \geq r_i$, then $|\mathcal{B}_{n-r_i}| \geq |\mathcal{B}_{r_i-1}|$, and if $n-r_i < r_i$, then $|\mathcal{B}_{n-r_i}| \leq |\mathcal{B}_{r_i-1}|$. Thus we obtain that $|\mathcal{B}_n^1|=\min\{|\mathcal{B}_{n-r_i}|, |\mathcal{B}_{r_i-1}|\}$.\\

Hence the formula $\alpha(n)=\alpha(r_i-1)+\min\{\alpha(n-r_i),\alpha(r_i-1)\}$ holds.

\end{proof}

%Next we will derive a formula for $\frac{\alpha(r_i-1)}{r_i}$.\\

\begin{Lemma} \label{epsilon}
For all $i \geq 2$, the following holds:\\
 
\begin{enumerate}[1.]
 \item If $\theta < \infty$, then $\frac{\alpha(r_i-1)}{r_i}=\frac{1}{2+\theta}+ \varepsilon_i$, where $0 < \varepsilon_i := \frac{R_i+2^{1-i}}{(2+\theta)[2+\theta-R_i-2^{1-i}]}$. (Recall that for every $i \geq 1$, $R_i:=\theta-\sum_{j=1}^{i}{e_j2^{-j}}$.)
 \item If $\theta = \infty$, then $\frac{\alpha(r_i-1)}{r_i}=\varepsilon_i$, where $0 < \varepsilon_i := \frac{1}{1+i-2^{1-i}}$. 
\end{enumerate}
\end{Lemma}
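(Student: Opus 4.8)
The plan is to compute $\alpha(r_i - 1)$ in closed form and then divide by $r_i$. First I would apply Lemma \ref{slab}: since $2r_{i-1} > r_0 + \dots + r_{i-1}$ by Lemma \ref{easy} (claim 2), and by Remark \ref{check}/Lemma \ref{othereasy} we in fact have $r_i > \max(2r_{i-1}, r_0 + \dots + r_{i-1})$, the integer $r_i - 1$ lies in the open interval $(r_0 + \dots + r_{i-1}, r_i)$ for $i \geq 2$. Hence Lemma \ref{slab} gives $\alpha(r_i - 1) = \dim_k(I_{r_i-1}/I_{r_i}) = 2^{i-1}$. (One must check the endpoints: $r_i - 1 > r_0 + \dots + r_{i-1}$ amounts to $r_i - (r_0 + \dots + r_{i-1}) \geq 2$, which follows from Lemma \ref{growth} for $i \geq 2$; and $r_i - 1 < r_i$ is trivial.) So the whole statement reduces to evaluating $\frac{2^{i-1}}{r_i}$.

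Next I would use the explicit formula \eqref{generalformula}: $r_i = 2^i - 1 + \sum_{k=2}^{i} e_k 2^{i-k}$. Dividing, $\frac{2^{i-1}}{r_i} = \frac{2^{i-1}}{2^i - 1 + \sum_{k=2}^{i} e_k 2^{i-k}} = \frac{1}{2 - 2^{1-i} + \sum_{k=2}^{i} e_k 2^{1-k}}$. In the case $\theta < \infty$, by Definition \ref{sequence} we have $\sum_{k=2}^{i} e_k 2^{1-k} = \sum_{k=2}^i e_k 2 \cdot 2^{-k}$; matching with $R_i = \theta - \sum_{k=2}^{i} e_k 2^{1-k}$ (note the indexing in the Lemma statement writes $R_i = \theta - \sum_{j=1}^i e_j 2^{-j}$, but $e_1$ is not defined, so this should be read as $\sum_{j=2}^i e_j 2^{1-j}$ to be consistent with Definition \ref{sequence}, and I would flag this), we get $\sum_{k=2}^i e_k 2^{1-k} = \theta - R_i$. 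Therefore the denominator is $2 - 2^{1-i} + \theta - R_i = (2+\theta) - (R_i + 2^{1-i})$, so $\frac{\alpha(r_i-1)}{r_i} = \frac{1}{(2+\theta) - (R_i + 2^{1-i})}$. Writing this as $\frac{1}{2+\theta} + \varepsilon_i$ and solving, $\varepsilon_i = \frac{1}{(2+\theta)-(R_i+2^{1-i})} - \frac{1}{2+\theta} = \frac{R_i + 2^{1-i}}{(2+\theta)[(2+\theta) - R_i - 2^{1-i}]}$, matching the claimed expression. For positivity of $\varepsilon_i$: the numerator $R_i + 2^{1-i} > 0$ since $R_i \geq 0$ by Definition \ref{sequence} (the $e_j$ are chosen so the remainder stays nonnegative) and $2^{1-i} > 0$; and the denominator is positive because $R_i + 2^{1-i} < 2^{1-i} + 2^{1-i} \cdot(\text{something}) $ — more carefully, $R_i < 2^{1-i}$ from the defining inequality $0 \leq \theta - \sum_{k=2}^i e_k 2^{1-k} < 2^{1-i}$, so $R_i + 2^{1-i} < 2^{2-i} \leq 1 < 2 + \theta$ for $i \geq 2$, giving $(2+\theta) - R_i - 2^{1-i} > 0$.

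For $\theta = \infty$, I would instead use Definition \ref{sequence}(2): $e_j = 2^{j-1}$, so $\sum_{k=2}^i e_k 2^{1-k} = \sum_{k=2}^i 1 = i - 1$. Then the denominator $2 - 2^{1-i} + (i-1) = 1 + i - 2^{1-i}$, hence $\frac{\alpha(r_i-1)}{r_i} = \frac{1}{1 + i - 2^{1-i}} = \varepsilon_i$, and positivity is clear since $1 + i - 2^{1-i} > 0$ for $i \geq 2$. The argument is essentially routine bookkeeping; the only genuine point requiring care is the application of Lemma \ref{slab} at $s = r_i - 1$, i.e. verifying that $r_i - 1$ genuinely lies strictly between $r_0 + \dots + r_{i-1}$ and $r_i$ — this is where Lemma \ref{growth} (with its lower bound of $i - 1 \geq 1$) is needed to push $r_i - 1$ strictly above the lower endpoint. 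I expect no other obstacle; the rest is algebraic manipulation of the explicit formula for $r_i$.
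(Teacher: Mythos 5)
Your proposal follows the same route as the paper's proof: identify $\alpha(r_i - 1) = 2^{i-1}$ via Lemma \ref{slab}, divide by the closed-form expression for $r_i$ from (\ref{generalformula}), and then rearrange algebraically into the claimed shapes (including the same positivity check via $0 \leq R_i < 2^{1-i}$ for $\theta < \infty$). You are even more careful than the paper in two places: you explicitly check applicability of Lemma \ref{slab} at $s = r_i - 1$, and you correctly flag that the parenthetical restatement of $R_i$ in the lemma ($\sum_{j=1}^i e_j 2^{-j}$) is inconsistent with Definition \ref{sequence} ($\sum_{k=2}^i e_k 2^{1-k}$) and should be read so as to match the definition.

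One small inaccuracy in your endpoint check: you assert that $r_i - (r_0 + \cdots + r_{i-1}) \geq 2$ ``follows from Lemma \ref{growth} for $i \geq 2$.'' Lemma \ref{growth} applied at index $i-1$ gives $r_i - (r_0 + \cdots + r_{i-1}) \geq i-1$, which for $i = 2$ yields only $\geq 1$, not $\geq 2$. Concretely, when $e_2 = 0$ (which occurs whenever $\theta < \tfrac12$), one has $r_2 = 3$, so $r_2 - 1 = 2 = r_0 + r_1$ sits exactly at the excluded left endpoint of the open interval in Lemma \ref{slab}, and that lemma as stated does not apply. The value $\alpha(2) = 2 = 2^1$ is still correct — a direct count of the basis monomials in Lemma \ref{basis} with $r_0 = r_1 = 1$, $r_2 = 3$ confirms it — but this $i=2$, $e_2=0$ case needs its own sentence. (The paper's proof glosses over this as well, simply citing Lemma \ref{slab} without checking the endpoint, so you are not behind the paper here; your instinct to verify the hypothesis was right, only the citation needs the cheap extra step.) For $i \geq 3$, or for $i = 2$ with $e_2 \geq 1$, your argument is complete.
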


\begin{proof}
 Suppose first that $\theta < \infty$. Observe that $$\frac{\alpha(r_i-1)}{r_i}=\frac{2^{i-1}}{2^{i}-1+\sum_{j=2}^{i}{e_j2^{i-j}}},$$ by Lemma \ref{slab}.\\
 
 Hence, $$\frac{\alpha(r_i-1)}{r_i} = \frac{1}{2-\frac{1}{2^{i-1}}+\sum_{j=2}^{i}{e_j2^{1-j}}},$$
 
 $$= \frac{1}{2-\frac{1}{2^{i-1}}+(\theta-R_i)},$$

 $$= \frac{1}{(2+\theta)(1-\frac{1}{(2+\theta)2^{i-1}}-\frac{R_i}{2+\theta})},$$
 
 $$= \frac{(1-\frac{1}{(2+\theta)2^{i-1}}-\frac{R_i}{2+\theta}) + (\frac{1}{(2+\theta)2^{i-1}}+\frac{R_i}{2+\theta})}{(2+\theta)(1-\frac{1}{(2+\theta)2^{i-1}}-\frac{R_i}{2+\theta})},$$
 
 $$= \frac{1}{2+\theta} + \varepsilon_i.$$
 
 Lastly, observe that for each $i \geq 2$, $0 \leq \theta - R_i$. Hence $-2^{1-i} \leq \theta -R_i - 2^{1-i}$, and $2-2^{1-i} \leq 2+ \theta -R_i - 2^{1-i}$. Hence $2+ \theta -R_i - 2^{1-i} > 0$. Further, $R_i \geq 0$, so $\varepsilon_i > 0$ follows.\\
 
  Next suppose that $\theta = \infty$. Observe that $$\frac{\alpha(r_i-1)}{r_i}=\frac{2^{i-1}}{2^{i}-1+\sum_{j=2}^{i}{e_j2^{i-j}}},$$ by Lemma \ref{slab}.\\
 
 Hence, $$\frac{\alpha(r_i-1)}{r_i} = \frac{1}{2-\frac{1}{2^{i-1}}+\sum_{j=2}^{i}{e_j2^{1-j}}},$$
 
 $$= \frac{1}{2-\frac{1}{2^{i-1}}+(i-1)},$$
 
 $$= \frac{1}{1+i-\frac{1}{2^{i-1}}},$$

 $$= \varepsilon_i.$$\\
 
 Observe that $\varepsilon_2=\frac{2}{5}>0$. Suppose that $i \geq 3$ is given, and $\varepsilon_{i-1}>0$. We have that $\frac{1}{\varepsilon_i}=\frac{1}{\varepsilon_{i-1}}+(1+\frac{1}{2^{i-1}})>\frac{1}{\varepsilon_{i-1}}>0$. Hence $\varepsilon_i > 0$ for $i \in \mathbb{N}$ by induction.\\
 
\end{proof}

\begin{Corollary} \label{limit}

\begin{enumerate}[1.]
 \item Suppose that $\theta < \infty$. Then for all $i \geq 2$, $\frac{\alpha(r_i-1)}{r_i}>\frac{1}{2+\theta}$. Furthermore, $\lim_{i \rightarrow \infty} \varepsilon_i = 0$, and so $\lim_{i \rightarrow \infty} \frac{\alpha(r_i-1)}{r_i} = \frac{1}{2+\theta}$.
 \item Suppose that $\theta = \infty$. Then for all $i \geq 2$, $\frac{\alpha(r_i-1)}{r_i}>0$. Furthermore, $\lim_{i \rightarrow \infty} \varepsilon_i = 0$, and so $\lim_{i \rightarrow \infty} \frac{\alpha(r_i-1)}{r_i} = 0$.
 \end{enumerate}

\end{Corollary}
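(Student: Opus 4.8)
The statement to prove is Corollary~\ref{limit}, which has two cases. In both cases, the explicit formula for $\varepsilon_i$ from Lemma~\ref{epsilon} is already in hand, so the work is entirely about (a) deducing the strict lower bound on $\frac{\alpha(r_i-1)}{r_i}$ and (b) showing $\varepsilon_i \to 0$.

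\medskip

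\textbf{Plan.} For part 1 (the case $\theta < \infty$), Lemma~\ref{epsilon} gives $\frac{\alpha(r_i-1)}{r_i} = \frac{1}{2+\theta} + \varepsilon_i$ with $\varepsilon_i > 0$, so the strict inequality $\frac{\alpha(r_i-1)}{r_i} > \frac{1}{2+\theta}$ is immediate. The remaining task is $\lim_{i\to\infty}\varepsilon_i = 0$. I would recall that $R_i = \theta - \sum_{k=2}^{i} e_k 2^{1-k}$ and that by Definition~\ref{sequence} we have $0 \le R_i < 2^{1-i}$; hence both $R_i \to 0$ and $2^{1-i} \to 0$. Plugging into $\varepsilon_i = \frac{R_i + 2^{1-i}}{(2+\theta)(2 + \theta - R_i - 2^{1-i})}$, the numerator tends to $0$ while the denominator is bounded away from $0$ (its second factor is $\ge 2 - 2^{1-i} \to 2$, as already noted in the proof of Lemma~\ref{epsilon}, and in any case converges to $2+\theta > 0$). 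Therefore $\varepsilon_i \to 0$, and consequently $\frac{\alpha(r_i-1)}{r_i} \to \frac{1}{2+\theta}$.

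\medskip

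For part 2 (the case $\theta = \infty$), Lemma~\ref{epsilon} gives $\frac{\alpha(r_i-1)}{r_i} = \varepsilon_i = \frac{1}{1 + i - 2^{1-i}}$ with $\varepsilon_i > 0$, which immediately yields $\frac{\alpha(r_i-1)}{r_i} > 0$. For the limit, note that $1 + i - 2^{1-i} \ge i - 1 \to \infty$, so $\varepsilon_i \to 0$, and hence $\frac{\alpha(r_i-1)}{r_i} \to 0$.

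\medskip

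\textbf{Main obstacle.} There is essentially no obstacle: both parts are short limit computations resting on Lemma~\ref{epsilon} and the defining property $0 \le R_i < 2^{1-i}$ of the $2$-adic-type expansion in Definition~\ref{sequence}. The only point requiring a moment of care is confirming that the denominator in the formula for $\varepsilon_i$ stays bounded away from zero — but this was already established inside the proof of Lemma~\ref{epsilon} (the inequality $2 + \theta - R_i - 2^{1-i} > 0$, indeed $\ge 2 - 2^{1-i}$), so it can simply be invoked. I would write the whole corollary's proof in three or four sentences.
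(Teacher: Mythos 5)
Your proof is correct and follows essentially the same route as the paper: both derive the strict inequality directly from $\varepsilon_i > 0$ in Lemma~\ref{epsilon}, and both obtain $\varepsilon_i \to 0$ from the bound $0 \le R_i < 2^{1-i}$ (resp.\ $1+i-2^{1-i} \to \infty$). The paper just states these two facts in a single sentence per case, whereas you spell out why the numerator of $\varepsilon_i$ tends to $0$ while the denominator stays bounded away from $0$.
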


\begin{proof}
 Suppose that $\theta<\infty$. Then the first claim follows since $\varepsilon_i>0$ and the second claim follows since $0\leq R_i<2^{1-i}$.\\
 
 Next, suppose that $\theta=\infty$. Then the first claim follows since $\varepsilon_i>0$ and the second claim follows since $1+i-\frac{1}{2^{i-1}} > i$.\\
 
 %It suffices to prove the second claim. Consider the estimate $\frac{R_i+2^{-i}}{(4+2\theta)[2+\theta-R_i-2^{-i}]} \leq \frac{R_i+2^{-i}}{2+\theta-R_i-2^{-i}} \leq \frac{R_i+2^{-i}}{2+\theta}$. Let $\varepsilon>0$ be given. Since $R_i < 2^{-i}$, we see that $\varepsilon_i<\varepsilon$ for all $i \geq 1-\log_2(\varepsilon(2+\theta))$.
\end{proof}

\begin{Lemma} \label{bigger}
\begin{enumerate}[1.]
 \item Suppose that $\theta < \infty$. Then we have that $\alpha(n) > (\frac{1}{2+\theta})n$ for all $n \geq 0$.\\
 \item Suppose that $\theta = \infty$. Then we have that $\frac{\alpha(n)}{n} > 0$ for all $n \geq 1$.\\
\end{enumerate}

\end{Lemma}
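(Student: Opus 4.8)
The plan is to prove the two statements by strong induction on $n$, using the recursive relation of Proposition \ref{recursive} together with the base estimate on $\alpha(r_i-1)/r_i$ from Lemma \ref{epsilon} and Corollary \ref{limit}. For part 1, fix $\theta < \infty$ and write $\lambda = \frac{1}{2+\theta}$. The claim $\alpha(n) > \lambda n$ is checked directly for the small values $n = 0, 1, \dots$ that lie below $r_2$ (here $\alpha(0) = \alpha(1) = 1$ and $\lambda \leq \frac12$, so these are trivial). Now assume $n \geq r_2$ and that the inequality holds for all smaller nonnegative integers. Let $i \geq 1$ be the unique index with $n \in [r_i, r_{i+1})$; actually we may assume $i \geq 2$ since the interval $[r_1,r_2)$ is covered by the base case. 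By Proposition \ref{recursive}, $\alpha(n) = \alpha(r_i - 1) + \min\{\alpha(r_i-1), \alpha(n - r_i)\}$.

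There are two cases. If the minimum is $\alpha(r_i - 1)$, then $\alpha(n) = 2\alpha(r_i-1)$, and by Lemma \ref{epsilon} (part 1) and Corollary \ref{limit}, $\alpha(r_i - 1) > \lambda r_i$, so $\alpha(n) = 2\alpha(r_i-1) > 2\lambda r_i$. Since $n < r_{i+1}$ and, by Lemma \ref{easy}, $r_{i+1} = 2r_i + 1 + e_{i+1} \geq 2r_i$, but more to the point we need $n \leq 2r_i$ in this sub-case; indeed the minimum equals $\alpha(r_i-1)$ precisely when $\alpha(n-r_i) \geq \alpha(r_i-1)$, which by the monotonicity established via Lemma \ref{persist} (as used in the proof of Proposition \ref{recursive}) happens when $n - r_i \geq r_i$, i.e. $n \geq 2r_i$; so this sub-case gives $n \geq 2r_i$, and then $\alpha(n) = 2\alpha(r_i - 1) > 2\lambda r_i$ is not yet enough — I instead bound using the other representation. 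The cleaner route: in the sub-case $n - r_i \geq r_i$, apply the inductive hypothesis to $n - r_i < n$ to get $\alpha(n-r_i) > \lambda(n - r_i)$, and also $\alpha(r_i - 1) > \lambda r_i$, hence $\alpha(n) = \alpha(r_i-1) + \alpha(r_i - 1)$; but since $\alpha$ is non-decreasing (Lemma \ref{persist}) and $\min = \alpha(r_i-1)$ forces $\alpha(n - r_i) \geq \alpha(r_i-1)$, we can instead write $\alpha(n) = \alpha(r_i - 1) + \alpha(r_i-1) \geq \alpha(r_i-1) + \min$, which is circular. So the argument that actually works uniformly is: $\alpha(n) = \alpha(r_i-1) + \min\{\alpha(r_i-1), \alpha(n-r_i)\} \geq \alpha(r_i - 1) + \min\{\lambda r_i, \lambda(n - r_i)\}$ by the inductive hypothesis applied to $n - r_i$ (valid since $n - r_i < n$, and if $n - r_i = 0$ use $\alpha(0) = 1 > 0 = \lambda \cdot 0$), and then $\alpha(n) > \lambda r_i + \min\{\lambda r_i, \lambda(n-r_i)\} = \lambda \min\{2r_i, n\} = \lambda n$, the last equality because $n < r_{i+1} \leq 2r_i + 1 + e_{i+1}$ — here I need $n \leq 2 r_i$. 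The main obstacle is precisely this: when $n > 2r_i$ (which can occur, since $r_{i+1}$ can exceed $2r_i + 1$), the bound $\lambda \min\{2r_i, n\} = 2\lambda r_i < \lambda n$, so the naive induction loses. To fix it, in the regime $n \geq 2r_i$ one uses $\alpha(n) = 2\alpha(r_i-1)$ together with the \emph{strict} excess $\alpha(r_i-1) = (\lambda + \varepsilon_i) r_i$ from Lemma \ref{epsilon}: one must show $2(\lambda + \varepsilon_i) r_i \geq \lambda n$, i.e. $\varepsilon_i \cdot 2 r_i \geq \lambda(n - 2r_i)$, using $n - 2r_i < r_{i+1} - 2r_i = 1 + e_{i+1}$ and the explicit value of $\varepsilon_i$ in terms of $R_i$ and $e_{i+1} = e_{i+1}$; this is a finite computation comparing $\varepsilon_i r_i$ against $\lambda(1 + e_{i+1})/2$.

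For part 2, the argument is identical but simpler: with $\theta = \infty$ we only need $\alpha(n)/n > 0$, i.e. $\alpha(n) \geq 1$, which is immediate since $\mathcal{B}_n$ always contains $[P_0^n]$ (the tuple $(n,0,\dots,0)$), so $\alpha(n) \geq 1$ for all $n \geq 0$; alternatively it follows from $\alpha$ being non-decreasing with $\alpha(0) = 1$. So part 2 requires no induction at all beyond this observation, and the statement for $n \geq 1$ is immediate.

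I expect the main obstacle to be the case $n \in (2r_i, r_{i+1})$ in part 1, where the clean telescoping inequality $\alpha(n) > \lambda n$ must be recovered from the doubled quantity $2\alpha(r_i - 1) = 2(\lambda + \varepsilon_i)r_i$; handling this cleanly will require invoking the precise formula for $\varepsilon_i$ from Lemma \ref{epsilon} and the bound $r_{i+1} - 2r_i = 1 + e_{i+1}$, and checking that $\varepsilon_i$ is large enough relative to $e_{i+1}/r_i$. Everything else is a routine induction driven by Proposition \ref{recursive} and the monotonicity from Lemma \ref{persist}.
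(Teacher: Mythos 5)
Your approach is the same as the paper's: induct over the intervals $[r_i,r_{i+1})$ using the recursion $\alpha(n)=\alpha(r_i-1)+\min\{\alpha(r_i-1),\alpha(n-r_i)\}$ together with $\alpha(r_i-1)>\lambda r_i$ from Corollary~\ref{limit}, handling $n-r_i<r_i$ by the inductive hypothesis and $n-r_i\geq r_i$ by a direct estimate. You have correctly located the one step that requires real work. However, the proposal has two genuine gaps where you declare things trivial or leave a ``finite computation'' pending, and both turn out to be the load-bearing parts.

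First, the base case $n\in[0,r_2)$ is not ``trivial because $\lambda\le\frac12$.'' You assert $\alpha(0)=\alpha(1)=1$, but in fact $\alpha(1)=2$ (both $P_0$ and $P_1$ have value $1$ since $r_0=r_1=1$), and more importantly $r_2=3+e_2$ can be large when $\theta$ is large, so you must verify $\alpha(n)>\lambda n$ for \emph{all} of $[2,r_2)$, not just a couple of small $n$. This needs $\alpha(n)=2$ on $[2,r_2)$ (Lemma~\ref{slab}) together with $e_2\le 2\theta$, which gives $\lambda(r_2-1)=\frac{2+e_2}{2+\theta}\le\frac{2+2\theta}{2+\theta}<2$; the condition $\lambda\le\frac12$ alone is not enough near the upper end of the interval.

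Second, in the sub-case $n-r_i\ge r_i$ you correctly note the obstruction (when $n>2r_i$ the naive bound $2\lambda r_i<\lambda n$ fails) and correctly suggest invoking the exact value of $\varepsilon_i$, but you stop at ``this is a finite computation.'' The computation is the crux of the lemma and must actually be carried out. What it reduces to is showing $\alpha(r_i-1)\ge\lambda(n-r_i)$; bounding $n-r_i\le r_{i+1}-1-r_i=2^i-1+\sum_{j=2}^i e_j 2^{i-j}+e_{i+1}$ and dividing by $\alpha(r_i-1)=2^{i-1}$, this becomes the inequality $2+\theta\ge 2-2^{1-i}+(\theta-R_i)+e_{i+1}2^{1-i}$, equivalently $R_i\ge(e_{i+1}-1)2^{1-i}$, which holds because $e_{i+1}\in\{0,1\}$ and $R_i\ge 0$. (A small slip in your sketch: since $n\le r_{i+1}-1$ you get $n-2r_i\le e_{i+1}$, not $<1+e_{i+1}$; the correct target is $2\varepsilon_i r_i\ge\lambda e_{i+1}$, not $\lambda(1+e_{i+1})$.) The paper's proof carries out exactly this estimate. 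Your part~2 is fine: $\alpha(n)\ge 1$ is immediate and suffices.
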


\begin{proof}
 
 First suppose that $\theta < \infty$. \\

Observe first that if $n=0$, then $\alpha(0)=1 > (\frac{1}{2+\theta})0$, and if $n=1$, then $\alpha(1)=2>1>\frac{1}{2+\theta}(1)$. If $n \in [r_0+r_1,r_2)=[2,r_2)$, then $\frac{\alpha(n)}{n} = \frac{2}{n} \geq \frac{2}{r_2} = \frac{2}{3+e_2}$. By construction, $e_2 \leq 2\theta$, so $\frac{\alpha(n)}{n} \geq \frac{2}{3+e_2} \geq \frac{2}{3+2\theta} > \frac{2}{4+2\theta} = \frac{1}{2+\theta}$. Hence the result holds for all $n \in [0,r_2)$.\\
 
% Observe first that if $n \in [0,r_1)$, then $\alpha(n)=1$. Thus if $n=0$, we are done. Next recall that $r_1 = 2r_0+1+e_1 = 3+e_1 \leq 3+2\theta$. Hence $\frac{\alpha(n)}{n} = \frac{1}{n} > \frac{1}{r_1} \geq \frac{1}{3+2\theta} > \frac{1}{4+2\theta}$ for $n \in [1,r_1)$ and so the result holds for $n \in [0,r_1)=[r_{-1},r_0) \cup [r_0,r_1)$.\\
 
Let $n \geq 2$ be given, and consider the unique $i \in \mathbb{Z}_{\geq 1}$ such that $n \in [r_i,r_{i+1})$. We will prove the result by induction on $i$. The case $i=1$ has already been verified. Assume then that $i \geq 2$. We will use the formula 

%In the remainder of the proof,  
 
  $$\alpha(n) = \alpha(r_i-1) + \min\{ \alpha(r_i-1), \alpha(n-r_i) \},$$\\

 and the observation that if \\
 
\begin{equation} \label{first_fraction}
 \alpha(r_i-1) > \left(\frac{1}{2+\theta}\right)r_i 
\end{equation}
 
 and 
 
 \begin{equation} \label{second_fraction}
\min\{\alpha(r_i-1), \alpha(n-r_i) \} \geq \left(\frac{1}{2+\theta} \right)(n-r_i),  
 \end{equation}

 then upon adding these two inequalities, we obtain
 
 $$\alpha(n) > \left( \frac{1}{2+\theta} \right)n.$$ \\

 Observe that we already have (\ref{first_fraction}) from Corollary \ref{limit}.\\
 
  We now check (\ref{second_fraction}). Assume the inductive statement holds: that $r_i \leq n < r_{i+1}$, $i \geq 2$, and that $\alpha(m)> \left( \frac{1}{2+\theta}\right) m $ for all $0 
  \leq m < r_i$.\\
  
    If $n-r_i <r_i$, then by Lemma \ref{persist}, $$\min\{\alpha(r_i-1), \alpha(n-r_i) \}=\alpha(n-r_i) > \left(\frac{1}{2+\theta}\right)(n-r_i),$$ so (\ref{second_fraction}) holds and we are done. \\
 
 Suppose then that $n-r_i \geq r_i$, in which case $\min\{\alpha(r_i-1), \alpha(n-r_i) \}=\alpha(r_i-1)$ by Lemma \ref{persist}. Observe that since $r_i \leq n < r_{i+1}$, we have $0 \leq n-r_i \leq r_{i+1}-1-r_i$, so that $\frac{\alpha(r_i-1)}{n-r_i} \geq \frac{\alpha(r_i-1)}{r_{i+1}-1-r_i}$. We have $$r_{i+1}-1-r_i = 2^{i}-1+\sum_{j=2}^{i}{e_j2^{i-j}} + e_{i+1},$$ so $$\frac{\alpha(r_i-1)}{n-r_i} \geq \frac{2^{i-1}}{2^{i}-1+\sum_{j=2}^{i}{e_j2^{i-j}} + e_{i+1}}. $$ It follows that $$\frac{\alpha(r_i-1)}{n-r_i} \geq \frac{1}{2+\theta -R_i + \frac{(e_{i+1}-1)}{2^{i-1}}} $$ and it suffices to show that $-R_i+\frac{e_{i+1}-1}{2^{i-1}}\leq 0$, or equivalently, $\frac{e_{i+1}-1}{2^{i-1}} \leq R_i$. But $-\frac{1}{2^{i-1}} \leq \frac{e_{i+1}-1}{2^{i-1}} \leq 0$ since $e_{i+1} \in \{0,1 \}$ for $i \geq 2$, and $0 \leq R_i \leq 2^{1-i}$ by construction. Hence $$\frac{\alpha(r_i-1)}{n-r_i} \geq  \frac{1}{2+\theta}, $$ or $$\min\{\alpha(r_i-1), \alpha(n-r_i) \}=\alpha(r_i-1) \geq  \left(\frac{1}{2+\theta}\right) (n-r_i) $$ and (\ref{second_fraction}) holds.\\

%------------EDITED TILL HERE 20140210-1952

%\theta - \sum_{j=2}^{i+1}{e_j2^{1-j}}$. But $\frac{e_{i+1}-2}{2^{i+1}}<0$ since $e_{i+1} \in \{0,1 \}$ for $i \geq 1$,
 %by Lemma \ref{zeroone} 
% and $\theta - \sum_{j=1}^{i+1}{e_j2^{-j}} \geq 0$ by construction. 

Thus the result follows.\\

Finally, the result follows in the case $\theta = \infty$ since for all $n \geq 1$, we have $\alpha(n) \geq 1$, and so $\frac{\alpha(n)}{n} \geq \frac{1}{n}>0$.\\
 
% Next, suppose that $\theta=\infty$. For all $n \geq 1$, we have that $\alpha(n) \geq 2^{i-1}$, where $i$ is the unique $j$ such that $n \in [r_j,r_{j+1})$. Then $\alpha(n) \geq 2^{i-1}>0$. \\

 %Now combining with Corollary \ref{limit}, we obtain that the result holds via the formula $$\frac{\alpha(n)}{n} = \frac{\alpha(r_i-1) + \min\{\alpha(r_i-1), \alpha(n-r_i) \}}{r_i + (n-r_i)}.$$\\
 
\end{proof}

\begin{Lemma} \label{smaller}
 Let $\alpha(n)=\ell_R(I_n/I_{n+1})$ and let $\varepsilon > 0$ be given. 
\begin{enumerate}[1.]
 \item Suppose $\theta < \infty$. Then there exists $N(\varepsilon)$ such that for all $i \geq N(\varepsilon)$ and all $n \in [r_{i},r_{i+1})$, we have that $\alpha(n) \leq (\frac{1}{2+\theta} + \varepsilon)n + \alpha(r_{N(\varepsilon)}-1)$. 
 \item Suppose $\theta = \infty$. Then there exists $N(\varepsilon)$ such that for all $i \geq N(\varepsilon)$ and all $n \in [r_{i},r_{i+1})$, we have that $\alpha(n) \leq \varepsilon n + \alpha(r_{N(\varepsilon)}-1)$. 
\end{enumerate}

\end{Lemma}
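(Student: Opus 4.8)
The plan is to reduce the lemma to a single strong induction, handling the cases $\theta<\infty$ and $\theta=\infty$ uniformly: I describe $\theta<\infty$, and the case $\theta=\infty$ is obtained by replacing $\frac{1}{2+\theta}$ by $0$ throughout and using part 2 of Lemma \ref{epsilon} and part 2 of Corollary \ref{limit} in place of their parts 1. First I would invoke Corollary \ref{limit} to fix, once and for all, an integer $N=N(\varepsilon)\ge 2$ with $\varepsilon_i<\varepsilon$ for every $i\ge N$, and set $C:=\alpha(r_N-1)$. Rather than restricting to $n\in[r_i,r_{i+1})$ from the outset, I would prove the slightly stronger uniform bound
$$\alpha(m)\le\left(\frac{1}{2+\theta}+\varepsilon\right)m+C\qquad\text{for all }m\in\NN,$$
of which the statement of the lemma is the special case $m=n\in[r_i,r_{i+1})$ with $i\ge N$, since then $C=\alpha(r_{N(\varepsilon)}-1)$.

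I would prove this inequality by strong induction on $m$. For $m<r_N$ there is nothing to do: since $\nu(P_0)=r_0=1$ we have $1\in S^R(\nu)$, so $\alpha$ is non-decreasing (Lemma \ref{persist}, which is also the monotonicity noted in the introduction), and hence $\alpha(m)\le\alpha(r_N-1)=C\le\left(\frac{1}{2+\theta}+\varepsilon\right)m+C$. For $m\ge r_N$, let $i$ be the unique index with $m\in[r_i,r_{i+1})$; then $i\ge N\ge 2$. Proposition \ref{recursive} gives
$$\alpha(m)=\alpha(r_i-1)+\min\{\alpha(r_i-1),\,\alpha(m-r_i)\}\le\alpha(r_i-1)+\alpha(m-r_i),$$
and now I would estimate the two summands separately: Lemma \ref{epsilon} together with $i\ge N$ yields $\alpha(r_i-1)=\left(\frac{1}{2+\theta}+\varepsilon_i\right)r_i\le\left(\frac{1}{2+\theta}+\varepsilon\right)r_i$, while $0\le m-r_i<m$ lets me apply the induction hypothesis to get $\alpha(m-r_i)\le\left(\frac{1}{2+\theta}+\varepsilon\right)(m-r_i)+C$. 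Adding these two bounds gives exactly $\alpha(m)\le\left(\frac{1}{2+\theta}+\varepsilon\right)m+C$, closing the induction.

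The step I expect to be the only one requiring thought is the bookkeeping around the minimum in Proposition \ref{recursive}: it is essential to bound $\min\{\alpha(r_i-1),\alpha(m-r_i)\}$ by $\alpha(m-r_i)$, not by $\alpha(r_i-1)$, because the latter choice only yields $\alpha(m)\le 2\alpha(r_i-1)\le\left(\frac{1}{2+\theta}+\varepsilon\right)2r_i$, which is useless when $m$ is only slightly larger than $r_i$. One must also retain the additive constant $C$ throughout and peel off the indices $i<N$ by monotonicity of $\alpha$ alone, since for those indices $\varepsilon_i$ need not be small. Everything else — the existence of $N$, the recursion, and the formula for $\alpha(r_i-1)$ — is already established in the excerpt, so no genuinely new estimate is needed.
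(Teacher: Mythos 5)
Your proof is correct and uses the same two core ingredients as the paper — the recursion $\alpha(n)=\alpha(r_i-1)+\min\{\alpha(r_i-1),\alpha(n-r_i)\}$ from Proposition \ref{recursive}, and the estimate $\alpha(r_i-1)\le(\frac{1}{2+\theta}+\varepsilon)r_i$ for $i\ge N(\varepsilon)$ coming from Lemma \ref{epsilon} and Corollary \ref{limit} — but the organization is genuinely cleaner. The paper restricts attention to $n\ge r_{N(\varepsilon)}$ and splits into the case $n-r_i\ge r_i$ (handled directly, where the minimum is $\alpha(r_i-1)\le(\frac{1}{2+\theta}+\varepsilon)(n-r_i)$) and the case $n-r_i<r_i$ (handled by induction on $i$, with a separate base case $i=N(\varepsilon)$ and a further subcase inside the inductive step according to whether $n-r_i$ lands in an interval with index below or above $N(\varepsilon)$). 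You instead strengthen the claim to hold for all $m\in\NN$, dispose of $m<r_{N}$ by monotonicity of $\alpha$, and in the main step drop the minimum to $\alpha(m-r_i)$ unconditionally and apply strong induction on $m$ to the shifted argument $m-r_i<m$. That single move collapses the paper's three-way analysis into one uniform step, at the mild cost of carrying the additive constant $C$ even in the regime $n-r_i\ge r_i$ where the paper shows the constant is actually unnecessary. One minor correction to your commentary: your remark that bounding the minimum by $\alpha(r_i-1)$ is ``useless'' overstates the point — the paper does use that bound in the case $n-r_i\ge r_i$, and there it works precisely because $r_i\le n-r_i$; it is only in the complementary case that one must switch to $\alpha(n-r_i)$. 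Your observation, rather, is that the bound $\min\le\alpha(m-r_i)$ works in \emph{both} cases once the induction is on $m$ with the constant present, which is why the case split can be dropped.
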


\begin{proof}

We first verify the result when $\theta < \infty$. \\

We will use the formula

  $$\alpha(n) = \alpha(r_i-1) + \min\{ \alpha(r_i-1), \alpha(n-r_i) \},$$\\

 and the observation that if \\
 
\begin{equation} \label{first_fraction_bigger}
 \alpha(r_i-1) \leq \left(\frac{1}{2+\theta} + \varepsilon \right)r_i 
\end{equation}
 
 and 
 
 \begin{equation} \label{second_fraction_bigger}
\min\{\alpha(r_i-1), \alpha(n-r_i) \} \leq \left(\frac{1}{2+\theta} + \varepsilon \right)(n-r_i) + \alpha(r_{N(\varepsilon)}-1),  
 \end{equation}

 then upon adding these two inequalities, we obtain
 
 $$\alpha(n) \leq \left( \frac{1}{2+\theta} + \varepsilon \right)n + \alpha(r_{N(\varepsilon)}-1).$$ \\

 Observe that by Corollary \ref{limit}, for every $\varepsilon > 0$, there exists $N(\varepsilon)$ such that for all $i \geq N(\varepsilon)$, $\varepsilon_i < \varepsilon$, where $\varepsilon_i$ is as in Lemma \ref{epsilon}. Thus for $ i\geq N(\varepsilon)$, we have $\frac{\alpha(r_i-1)}{r_i}<\frac{1}{2+\theta} + \varepsilon$, or 
 
 \begin{equation} \label{first_smaller}
\alpha(r_i-1)<\left(\frac{1}{2+\theta} + \varepsilon\right)r_i.  
 \end{equation}

 Let $ i\geq N(\varepsilon)$ and suppose that $n-r_i \geq r_i$. Then 
 
 \begin{equation} \label{smaller_2}
 \min\{\alpha(r_i-1), \alpha(n-r_i) \}=\alpha(r_i-1) < \left(\frac{1}{2+\theta}+\varepsilon\right)r_i \leq \left(\frac{1}{2+\theta}+\varepsilon\right)(n-r_i).
 \end{equation}
 
 Thus adding (\ref{first_smaller}) and (\ref{smaller_2}) gives that $$\alpha(n) \leq \left(\frac{1}{2+\theta} + \varepsilon\right)n \leq  \left(\frac{1}{2+\theta} + \varepsilon\right)n + \alpha(r_{N(\varepsilon)}-1) $$ \\
 
 and hence the result for this case.\\
 
 Suppose next that $ i\geq N(\varepsilon)$ and $n-r_i < r_i$. We will prove this case by an inductive argument. \\
 
 First assume that $i=N(\varepsilon)$ for the base case. Then 
 \begin{equation} \label{smaller_3}
  \min\{\alpha(r_i-1), \alpha(n-r_i) \}=\alpha(n-r_i) \leq   
  \alpha(r_{N(\varepsilon)}-1) \leq \alpha(r_{N(\varepsilon)}-1) + \left(\frac{1}{2+\theta}+\varepsilon\right)(n-r_i),
 \end{equation}

 whence adding (\ref{first_smaller}) and (\ref{smaller_3}) gives that $$\alpha(n) \leq \left(\frac{1}{2+\theta} + \varepsilon\right)n + \alpha(r_{N(\varepsilon)}-1) $$ and the result for the base case. \\
 
 Next, assume the inductive statement: that $n-r_i < r_i$, that $i>N(\varepsilon)$ and that for all $N(\varepsilon) \leq j <i$, and all $m \in [r_j,r_{j+1})$, we have $\alpha(m) \leq \left(\frac{1}{2+\theta}+\varepsilon\right)m + \alpha(r_{N(\varepsilon)}-1)$. Since $n-r_i \in [r_j,r_{j+1})$ for some $j<i$, we have by the inductive statement if $j \geq N(\varepsilon)$, 
 
 \begin{equation} \label{smaller_4}
 \min\{\alpha(r_i-1), \alpha(n-r_i) \}=\alpha(n-r_i) \leq \left(\frac{1}{2+\theta}+\varepsilon\right)(n-r_i) + \alpha(r_{N(\varepsilon)}-1) ,
 \end{equation}\\

and if $j<N(\varepsilon)$, then $\alpha(n-r_i) \leq \alpha(r_{N(\varepsilon)}-1)$, and (\ref{smaller_4}) still holds.\\
 
 Thus adding (\ref{first_smaller}) and (\ref{smaller_4}) gives that $$\alpha(n) \leq \left(\frac{1}{2+\theta} + \varepsilon\right)n + \alpha(r_{N(\varepsilon)}-1) $$ and the result for the inductive step.\\
 
 Hence $$\alpha(n) = \alpha(r_i-1) + \min\{\alpha(r_i-1),\alpha(n-r_i) \} \leq \left( \frac{1}{2+\theta}+\varepsilon  \right)n + \alpha(r_{N(\varepsilon)}-1)$$ for $n \geq r_{N(\varepsilon)}$.\\

  Next, we verify the result when $\theta = \infty$. \\
  
We will use the formula

  $$\alpha(n) = \alpha(r_i-1) + \min\{ \alpha(r_i-1), \alpha(n-r_i) \},$$\\

 and the observation that if \\

 \begin{equation} \label{first_fraction_biggerer}
 \alpha(r_i-1) \leq \varepsilon r_i 
\end{equation}
 
 and 
 
 \begin{equation} \label{second_fraction_biggerer}
\min\{\alpha(r_i-1), \alpha(n-r_i) \} \leq \varepsilon (n-r_i) + \alpha(r_{N(\varepsilon)}-1),  
 \end{equation}

 then upon adding these two inequalities, we obtain
 
 $$\alpha(n) \leq \varepsilon n + \alpha(r_{N(\varepsilon)}-1).$$ \\
 
 Observe that by Corollary \ref{limit}, for every $\varepsilon > 0$, there exists $N(\varepsilon)$ such that for all $i \geq N(\varepsilon)$, $\varepsilon_i < \varepsilon$, where $\varepsilon_i$ is as in Lemma \ref{epsilon}. Thus for $ i\geq N(\varepsilon)$, we have $\frac{\alpha(r_i-1)}{r_i}<\varepsilon$, or \\
 
 \begin{equation} \label{first_smallerer}
\alpha(r_i-1)<\varepsilon r_i.  
 \end{equation}\\

 Let $ i\geq N(\varepsilon)$ and suppose that $n-r_i \geq r_i$. Then \\
 
 \begin{equation} \label{smallerer_2}
 \min\{\alpha(r_i-1), \alpha(n-r_i) \}=\alpha(r_i-1) < \varepsilon r_i \leq \varepsilon (n-r_i).\\
 \end{equation}\\
 
 Thus adding (\ref{first_smallerer}) and (\ref{smallerer_2}) gives that $$\alpha(n) \leq \varepsilon n \leq  \varepsilon n + \alpha(r_{N(\varepsilon)}-1) $$ \\
 
 and hence the result for this case.\\
 
 Suppose next that $ i\geq N(\varepsilon)$ and $n-r_i < r_i$. We will prove this case by an inductive argument. \\
 
 First assume that $i=N(\varepsilon)$ for the base case. Then 
 \begin{equation} \label{smallerer_3}
  \min\{\alpha(r_i-1), \alpha(n-r_i) \}=\alpha(n-r_i) \leq   
  \alpha(r_{N(\varepsilon)}-1) \leq \alpha(r_{N(\varepsilon)}-1) + \varepsilon (n-r_i),
 \end{equation}

 whence adding (\ref{first_smallerer}) and (\ref{smallerer_3}) gives that $$\alpha(n) \leq \varepsilon n + \alpha(r_{N(\varepsilon)}-1) $$ and the result for the base case. \\
 
 Next, assume the inductive statement: that $n-r_i < r_i$, that $i>N(\varepsilon)$ and that for all $N(\varepsilon) \leq j <i$, and all $m \in [r_j,r_{j+1})$, we have $\alpha(m) \leq \varepsilon m + \alpha(r_{N(\varepsilon)}-1)$. Since $n-r_i \in [r_j,r_{j+1})$ for some $j<i$, we have by the inductive statement if $j \geq N(\varepsilon)$, 
 
 \begin{equation} \label{smallerer_4}
 \min\{\alpha(r_i-1), \alpha(n-r_i) \}=\alpha(n-r_i) \leq \varepsilon (n-r_i) + \alpha(r_{N(\varepsilon)}-1) ,
 \end{equation}\\

and if $j<N(\varepsilon)$, then $\alpha(n-r_i) \leq \alpha(r_{N(\varepsilon)}-1)$, and (\ref{smallerer_4}) still holds.\\
 
 Thus adding (\ref{first_smallerer}) and (\ref{smallerer_4}) gives that $$\alpha(n) \leq \varepsilon n + \alpha(r_{N(\varepsilon)}-1) $$ and the result for the inductive step.\\
 
 Hence $$\alpha(n) = \alpha(r_i-1) + \min\{\alpha(r_i-1),\alpha(n-r_i) \} \leq \varepsilon n + \alpha(r_{N(\varepsilon)}-1)$$ for $n \geq r_{N(\varepsilon)}$.

 \end{proof}

\begin{Corollary} \label{limitdone}
Let $\alpha(n)=\ell_R(I_n/I_{n+1})$.
\begin{enumerate}[1.]
 \item Suppose $\theta < \infty$. Then $\lim_{n \rightarrow \infty} {\frac{\alpha(n)}{n}} = \frac{1}{2+\theta}$.
 \item Suppose $\theta = \infty$. Then $\lim_{n \rightarrow \infty} {\frac{\alpha(n)}{n}} = 0$.
\end{enumerate}

\end{Corollary}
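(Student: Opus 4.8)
The plan is to combine the lower bound of Lemma~\ref{bigger} with the upper bound of Lemma~\ref{smaller} in a squeeze argument; essentially all of the substantive work has already been carried out in those two lemmas, and what remains is a routine limit computation.

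First I would treat the case $\theta < \infty$. By Lemma~\ref{bigger}, $\frac{\alpha(n)}{n} > \frac{1}{2+\theta}$ for all $n \geq 1$, so $\liminf_{n \to \infty} \frac{\alpha(n)}{n} \geq \frac{1}{2+\theta}$. For the reverse inequality, fix $\varepsilon > 0$ and let $N(\varepsilon)$ be as in Lemma~\ref{smaller}. Since $r_{i+1} > 2r_i$ for $i \geq 1$ by Lemma~\ref{easy}, the sequence $\{r_i\}$ is strictly increasing and unbounded, so every integer $n \geq r_{N(\varepsilon)}$ lies in some interval $[r_i,r_{i+1})$ with $i \geq N(\varepsilon)$. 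Lemma~\ref{smaller} then gives $\alpha(n) \leq \left(\frac{1}{2+\theta}+\varepsilon\right)n + \alpha(r_{N(\varepsilon)}-1)$, i.e.\ $\frac{\alpha(n)}{n} \leq \frac{1}{2+\theta} + \varepsilon + \frac{\alpha(r_{N(\varepsilon)}-1)}{n}$ for all $n \geq r_{N(\varepsilon)}$. Here $\alpha(r_{N(\varepsilon)}-1)$ is a fixed constant, so letting $n \to \infty$ yields $\limsup_{n \to \infty} \frac{\alpha(n)}{n} \leq \frac{1}{2+\theta} + \varepsilon$. Since $\varepsilon > 0$ was arbitrary, $\limsup_{n \to \infty} \frac{\alpha(n)}{n} \leq \frac{1}{2+\theta}$, and combining with the liminf bound shows that the limit exists and equals $\frac{1}{2+\theta}$.

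For the case $\theta = \infty$ the argument is identical, using the $\theta = \infty$ statements of Lemmas~\ref{bigger} and~\ref{smaller}: the former gives $\frac{\alpha(n)}{n} > 0$ for all $n \geq 1$, hence $\liminf_{n \to \infty} \frac{\alpha(n)}{n} \geq 0$, while the latter gives, for each $\varepsilon > 0$, $\frac{\alpha(n)}{n} \leq \varepsilon + \frac{\alpha(r_{N(\varepsilon)}-1)}{n}$ for all $n \geq r_{N(\varepsilon)}$, so that $\limsup_{n \to \infty} \frac{\alpha(n)}{n} \leq \varepsilon$ for every $\varepsilon > 0$, whence $\limsup_{n \to \infty} \frac{\alpha(n)}{n} \leq 0$. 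Therefore the limit exists and equals $0$.

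There is no genuine obstacle remaining in this corollary: the only points requiring any attention are that $\{r_i\}$ is unbounded, so that the interval hypothesis of Lemma~\ref{smaller} covers all sufficiently large $n$, and that $\alpha(r_{N(\varepsilon)}-1)$ is a constant independent of $n$, so that the error term $\frac{\alpha(r_{N(\varepsilon)}-1)}{n}$ vanishes as $n \to \infty$.
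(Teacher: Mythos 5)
Your proposal is correct and takes essentially the same route as the paper's proof: dividing the upper bound of Lemma~\ref{smaller} by $n$ to bound the $\limsup$, combining with the lower bound of Lemma~\ref{bigger} for the $\liminf$, and squeezing. The extra remarks you make (that $\{r_i\}$ is unbounded so the hypothesis of Lemma~\ref{smaller} eventually covers all $n$, and that $\alpha(r_{N(\varepsilon)}-1)$ is a constant so the error term vanishes) are worth noting but are implicit in the paper's argument.
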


\begin{proof}
First suppose $\theta < \infty$. Dividing both sides of the formula in Lemma \ref{smaller}(a) by $n$, we see that for all $\varepsilon >0$, we have that $\limsup_{n \rightarrow \infty} {\frac{\alpha(n)}{n}} \leq  \frac{1}{2+\theta}+\varepsilon$. Hence $$\limsup_{n \rightarrow \infty} {\frac{\alpha(n)}{n}} \leq  \frac{1}{2+\theta}.$$ Now by Lemma \ref{bigger}(a), we see that $$\liminf_{n \rightarrow \infty}{\frac{\alpha(n)}{n}} \geq \frac{1}{2+\theta}.$$ Hence the result follows.\\

Next suppose $\theta = \infty$. Dividing both sides of the formula in Lemma \ref{smaller}(b) by $n$, we see that for all $\varepsilon >0$, we have that $\limsup_{n \rightarrow \infty} {\frac{\alpha(n)}{n}} \leq  \varepsilon$. Hence $$\limsup_{n \rightarrow \infty} {\frac{\alpha(n)}{n}} \leq  0.$$ Now by Lemma \ref{bigger}(b), we see that $$\liminf_{n \rightarrow \infty}{\frac{\alpha(n)}{n}} \geq 0.$$ Hence the result follows.\\
\end{proof}

\begin{Theorem} \label{secondset_of_limits}
Let $C \in \mathbb{R} \cap [0,\frac{1}{2}]$ be given. There exists a regular local ring $(R,\mathfrak{m}_R)$ of dimension 2 and a discrete, rank 1 valuation $\nu$ of the quotient field of $R$ dominating $R$, such that each of the functions $\ell_R(R/I_n)$ and $\ell_R(I_n/I_{n+1})$ is not a quasi-polynomial plus a bounded function for large integers $n$ and such that $\lim_{n \rightarrow \infty} {\frac{\ell_R(I_n/I_{n+1})}{n}} = C$ and $\lim_{n \rightarrow \infty} {\frac{\ell_R(R/I_{n})}{n^2}} = \frac{C}{2}$. %are irrational (even transcendental) positive numbers. %More generally, $\lim_{n \rightarrow \infty} {\frac{\ell_R(R/I_{n})}{n^2}}$ can be chosen to be any number in $\mathbb{R} \cap [0,\frac{1}{8})$.
\end{Theorem}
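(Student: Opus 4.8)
The plan is to invoke the family of valuations already built in Lemma~\ref{propassume} via (\ref{inductive_defn}), choosing the parameter $\theta$ so that the slope of $\ell_R(I_n/I_{n+1})$ comes out to be $C$, and then to deduce the quadratic limit for $\ell_R(R/I_n)$ by summing. So first, given $C \in [0,\tfrac{1}{2}]$, I would set $\theta := \tfrac{1}{C}-2$ when $C>0$ and $\theta := \infty$ when $C=0$. Because $C \leq \tfrac{1}{2}$ forces $\tfrac{1}{C} \geq 2$, this $\theta$ lies in $\mathbb{R}_{\geq 0} \cup \{\infty\}$ (in fact $C \mapsto \theta$ is an order-reversing bijection from $[0,\tfrac{1}{2}]$ onto $\mathbb{R}_{\geq 0}\cup\{\infty\}$, with the endpoint $C=\tfrac12$ going to $\theta=0$ and $C=0$ to $\theta=\infty$). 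Attaching to this $\theta$ the sequences $\{e_j\}$ and $\{r_i\}$ of Definition~\ref{sequence} and (\ref{inductive_defn}), the first assertion of Lemma~\ref{easy} shows $\{r_i\}$ meets the hypotheses of Lemma~\ref{propassume}; I take $R := k[[x,y]]$ and $\nu$ the discrete rank~$1$ valuation of $QF(R)$ dominating the $2$-dimensional regular local ring $R$ that Lemma~\ref{propassume} produces from $\{r_i\}$.

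Next I would read off the two limits. Corollary~\ref{limitdone} gives $\lim_{n\to\infty}\frac{\ell_R(I_n/I_{n+1})}{n} = \frac{1}{2+\theta}$ when $\theta<\infty$ and $=0$ when $\theta=\infty$; by the choice of $\theta$ this equals $C$ in either case. For the non-quasi-polynomial behavior, I would observe that the proofs of Theorem~\ref{example} and Corollary~\ref{secondnon-poly} are carried out uniformly over the entire family arising from (\ref{inductive_defn}): they use only Lemma~\ref{slab} together with the bound $r_{i+1}-(r_0+\cdots+r_i) \geq i$ of Lemma~\ref{growth}, both of which hold for every $\theta$. Hence, for our particular $\nu$, neither $\ell_R(I_n/I_{n+1})$ nor $\ell_R(R/I_n)$ is a quasi-polynomial plus a bounded function for $n \gg 0$.

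It then remains to pass from $\lim_{n\to\infty}\alpha(n)/n = C$ (with $\alpha(n) := \ell_R(I_n/I_{n+1})$) to $\lim_{n\to\infty}\ell_R(R/I_n)/n^2 = C/2$. For this I would use the identity $\ell_R(R/I_n) = \sum_{k=0}^{n-1}\alpha(k)$ coming from the filtration $R = I_0 \supseteq I_1 \supseteq \cdots \supseteq I_n$ (as in the proof of Corollary~\ref{zerolimit}). Given $\varepsilon>0$, fix $N$ with $(C-\varepsilon)k \leq \alpha(k) \leq (C+\varepsilon)k$ for $k \geq N$, split the sum at $N$, bound $\sum_{k=N}^{n-1}\alpha(k)$ above by $(C+\varepsilon)\tfrac{n(n-1)}{2}$ and below by $(C-\varepsilon)\big(\tfrac{n(n-1)}{2}-\tfrac{N(N-1)}{2}\big)$, and divide by $n^2$; this gives $\tfrac{C-\varepsilon}{2} \leq \liminf_n \frac{\ell_R(R/I_n)}{n^2} \leq \limsup_n \frac{\ell_R(R/I_n)}{n^2} \leq \tfrac{C+\varepsilon}{2}$, and letting $\varepsilon \to 0$ finishes the proof. (This is just the elementary Cesàro-type fact that $a_n/n \to L$ implies $\tfrac{1}{n^2}\sum_{k<n} a_k \to L/2$.)

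I expect no serious obstacle here: Corollary~\ref{limitdone}, Theorem~\ref{example} and Corollary~\ref{secondnon-poly} carry all the genuine content, and the rest is bookkeeping. The main (and essentially only) points needing a moment's attention are confirming that the prescription $C \mapsto \theta$ really lands in the allowed range $\mathbb{R}_{\geq 0}\cup\{\infty\}$ and covers the boundary value $C=0$ via $\theta=\infty$, and noting explicitly that the earlier non-quasi-polynomiality statements were established for the whole family of valuations $\nu$ rather than for one fixed example.
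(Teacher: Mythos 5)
Your argument matches the paper's proof almost step for step: the same prescription $C\mapsto\theta$, the same invocation of Lemma~\ref{propassume}, Corollary~\ref{secondnon-poly} for the non-quasi-polynomiality, and Corollary~\ref{limitdone} for the linear limit. The one place you deviate is the last step, passing from $\lim_n \alpha(n)/n=C$ to $\lim_n \ell_R(R/I_n)/n^2=C/2$: the paper disposes of this by citing Lemma~5.1 of~\cite{DaleOlgaKia}, whereas you supply a direct Ces\`aro-type computation from $\ell_R(R/I_n)=\sum_{k=0}^{n-1}\alpha(k)$, splitting the sum at a threshold $N$ and squeezing between $(C\pm\varepsilon)\tfrac{n(n-1)}{2}$ plus lower-order terms. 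Your version is correct (note even for $C=0$ the lower bound is harmless since $\alpha(k)\geq 0$ makes $(C-\varepsilon)k$ a valid lower estimate) and has the modest advantage of being self-contained, at the cost of a few extra lines; the paper's citation keeps the exposition terse but introduces an external dependency. You are also right to flag, as the paper does implicitly, that Theorem~\ref{example} and Corollary~\ref{secondnon-poly} were proved uniformly over the whole family parametrized by $\theta$ via Lemmas~\ref{slab} and~\ref{growth}, so they apply to the particular $\nu$ chosen here.
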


\begin{proof}
If $0<C\leq \frac{1}{2}$, then let $\theta = \frac{1}{C}-2<\infty$, and if $C=0$, then let $\theta=\infty$. In either case, let $\{e_j\}$ be the associated sequence given by Definition \ref{sequence}, let the sequence $\{r_i\}_{i \in \mathbb{N}}$ be given by (\ref{inductive_defn}), and let $\nu$ be the valuation associated to $\{r_i\}$ by Lemma 1. We have proved the first claim in %Theorem \ref{example}.\\
Corollary \ref{secondnon-poly}.\\

When $0<C\leq \frac{1}{2}$, we have $\lim_{n \rightarrow \infty} {\frac{\alpha(n)}{n}} = \frac{1}{2+\theta} = C$ by Corollary \ref{limitdone}(a). When $C=0$, we have $\lim_{n \rightarrow \infty} {\frac{\alpha(n)}{n}} = 0 = C$ by Corollary \ref{limitdone}(b). Thus the second claim holds.\\

Notice that $\ell_R(R/I_{n}) = \sum_{j=0}^{n-1}{\ell_R(I_j/I_{j+1})} = \ell_R(R/I_1) + \sum_{j=1}^{n-1}{\ell_R(I_j/I_{j+1})}$. Hence $$\frac{\ell_R(R/I_{n})}{n^2} = \frac{\ell_R(R/I_1) + \sum_{j=1}^{n-1}{\ell_R(I_j/I_{j+1})}}{n^2} = \frac{\ell_R(R/I_1)}{n^2} + \frac{\sum_{j=1}^{n-1}{\ell_R(I_j/I_{j+1})}}{n^2}.$$  By Lemma 5.1 of \cite{DaleOlgaKia}, $\lim_{n \rightarrow \infty} {\frac{\ell_R(R/I_{n})}{n^2}} = \frac{C}{2}$ and the third claim holds.\\

\end{proof}

\begin{Corollary} \label{secondirrational_limits}
There exists a regular local ring $(R,\mathfrak{m}_R)$ of dimension 2 and a discrete, rank 1 valuation $\nu$ of the quotient field of $R$ dominating $R$, such that each of the functions $\ell_R(R/I_n)$ and $\ell_R(I_n/I_{n+1})$ is not a quasi-polynomial plus a bounded function for large integers $n$ and such that each of $\lim_{n \rightarrow \infty} {\frac{\ell_R(I_n/I_{n+1})}{n}}$ and $\lim_{n \rightarrow \infty} {\frac{\ell_R(R/I_{n})}{n^2}}$ are irrational (even transcendental) positive numbers.
 
\end{Corollary}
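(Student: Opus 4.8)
The plan is to obtain the Corollary as a direct specialization of Theorem \ref{secondset_of_limits}: it suffices to produce a single transcendental number $C$ lying in the interval $(0,\tfrac12]$ and feed it into that theorem, and the resulting ring $R$ and valuation $\nu$ will automatically satisfy every assertion of the Corollary.

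Concretely, I would take $C=\tfrac1\pi$. Since $\pi>2$ we have $C\in(0,\tfrac12]$, and since $\pi$ is transcendental and the algebraic numbers form a field, $C=\tfrac1\pi$ is transcendental (were it algebraic, its nonzero reciprocal $\pi$ would be too), and likewise $\tfrac{C}{2}=\tfrac1{2\pi}$ is transcendental, being the product of the rational $\tfrac12$ with the transcendental $\tfrac1\pi$. Both numbers are visibly positive. Applying Theorem \ref{secondset_of_limits} with this value of $C$ then yields a two-dimensional regular local ring $(R,\mathfrak{m}_R)$ and a discrete rank one valuation $\nu$ of $\mathrm{QF}(R)$ dominating $R$ for which neither $\ell_R(R/I_n)$ nor $\ell_R(I_n/I_{n+1})$ is a quasi-polynomial plus a bounded function for large $n$, and for which $\lim_{n\to\infty}\frac{\ell_R(I_n/I_{n+1})}{n}=C=\tfrac1\pi$ and $\lim_{n\to\infty}\frac{\ell_R(R/I_n)}{n^2}=\tfrac{C}{2}=\tfrac1{2\pi}$. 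Both limits are thus transcendental positive reals, which is precisely the claim.

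There is essentially no obstacle beyond checking that $C/2$ is transcendental whenever $C$ is, which is immediate from the closure of $\overline{\mathbb{Q}}$ under multiplication by nonzero rationals; all the substantive work has already been carried out in Theorem \ref{secondset_of_limits} and its supporting results (Corollaries \ref{secondnon-poly} and \ref{limitdone}, together with Lemma 5.1 of \cite{DaleOlgaKia}). Any transcendental number in the admissible range works equally well — for instance $C=\tfrac1e$ or $C=\tfrac{\ln 2}{2}$ — so I would simply record one such choice and invoke Theorem \ref{secondset_of_limits}.
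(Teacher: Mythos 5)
Your proposal is correct and follows essentially the same route as the paper: specialize Theorem \ref{secondset_of_limits} to a transcendental $C\in(0,\tfrac12]$. In fact you are slightly more careful than the paper's text, which only says to pick $C$ irrational — your explicit choice of a transcendental $C$ (and the observation that $C/2$ remains transcendental) is what the parenthetical ``even transcendental'' in the statement actually requires.
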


\begin{proof}
Since the proof of Theorem \ref{secondset_of_limits} does not depend on whether $C>0$ is rational, we may choose $C \in (0,\frac{1}{2}) \setminus \mathbb{Q}$ in the first line of the proof of Theorem \ref{secondset_of_limits}, and the result follows.
 
\end{proof}

\end{document}